\documentclass[11pt,a4paper]{article}

\usepackage{a4}

\usepackage[utf8]{inputenc}
\usepackage[english]{babel}

\usepackage{amsmath, amsthm}
\usepackage{amsfonts,amssymb}
\usepackage{mathrsfs,upgreek}
\usepackage{dsfont,xfrac}
\usepackage{thmtools,yhmath}
\usepackage{authblk}
\usepackage{geometry}
\usepackage[dvipsnames]{xcolor}

\usepackage{hyperref}

\usepackage{chngcntr}

\newcommand{\N}{\mathbb{N}}

\newcommand{\Q}{\mathbb{Q}}
\newcommand{\R}{\mathbb{R}}

\newcommand{\rmC}{{\rm C}}

\newcommand{\1}{\mathds{1}}

\newcommand{\Fc}{\mathcal{F}}
\newcommand{\Ec}{\mathcal{E}}
\newcommand{\Gc}{\mathcal{G}}

\newcommand{\Bc}{\mathcal{B}}

\newcommand{\Uc}{\mathcal{U}}

\renewcommand{\mid}{~\middle | ~}
\newcommand{\eps}{\varepsilon}
\newcommand{\mycdot}{\cdot}
\newcommand{\cv}[2][]{\underset{#2}{\overset{#1}{\longrightarrow}}}
\newcommand\quotient[2]{
        \mathchoice
            {
                \text{\raise1ex\hbox{$#1$}\Big/\lower1ex\hbox{$#2$}}%
            }
            {
                #1\,/\,#2
            }
            {
                #1\,/\,#2
            }
            {
                #1\,/\,#2
            }
}
\newcommand{\cadlag}{cadlag }

\newcommand{\Pcal}{\mathcal{P}}
\newcommand{\D}{\mathbb{D}}

\newcommand{\loc}{\text{loc}}
\newcommand{\Dloc}{\D_\loc}
\newcommand{\Dexp}{\D_{\text{exp}}}
\renewcommand{\d}{{\rm d}}
\newcommand{\Pbf}{\mathbf{P}}
\newcommand{\Ebf}{\mathbf{E}}

\newcommand{\esssup}{{\rm \text{esssup}}}

\makeatletter
\DeclareRobustCommand\widecheck[1]{{\mathpalette\@widecheck{#1}}}
\def\@widecheck#1#2{%
    \setbox\z@\hbox{\m@th$#1#2$}%
    \setbox\tw@\hbox{\m@th$#1%
       \widehat{%
          \vrule\@width\z@\@height\ht\z@
          \vrule\@height\z@\@width\wd\z@}$}%
    \dp\tw@-\ht\z@
    \@tempdima\ht\z@ \advance\@tempdima2\ht\tw@ \divide\@tempdima\thr@@
    \setbox\tw@\hbox{%
       \raise\@tempdima\hbox{\scalebox{1}[-1]{\lower\@tempdima\box
\tw@}}}%
    {\ooalign{\box\tw@ \cr \box\z@}}}
\makeatother

\usepackage[many]{tcolorbox}
\newcounter{myhypo}

\newtcolorbox{hypo}[1]{
  breakable,
  enhanced,
  top=0pt,
  bottom=0pt,
  nobeforeafter,
  colback=white,
  boxrule=0pt,
  arc=0pt,
  right=40pt,
  left=20pt,
  outer arc=0pt,
  overlay={
    \node[inner sep=0pt,anchor=east] 
    at (frame.east) 
    {(#1)};
  },
}

\makeatletter
\newcommand{\pushright}[1]{\ifmeasuring@#1\else\omit\hfill$\displaystyle#1$\fi\ignorespaces}
\newcommand{\pushleft}[1]{\ifmeasuring@#1\else\omit$\displaystyle#1$\hfill\fi\ignorespaces}
\makeatother

\theoremstyle{plain}
\newtheorem{theorem}{Theorem}[section]
\newtheorem{lemma}[theorem]{Lemma}
\newtheorem{proposition}[theorem]{Proposition}
\newtheorem{corollary}[theorem]{Corollary}

\theoremstyle{definition}
\newtheorem{definition}[theorem]{Definition}

\newtheorem{remark-base}[theorem]{Remark}

\theoremstyle{remark}

\counterwithin*{equation}{section}

\newenvironment{remark}{\pushQED{\qed}\begin{remark-base}}{\popQED\end{remark-base}}

\begin{document}
\title{Local Skorokhod topology on the space of \cadlag processes}
\author{Mihai Gradinaru}
\author{Tristan Haugomat}
\affil{\small Institut de Recherche Math{\'e}matique de Rennes, Universit{\'e} de Rennes 1,\\Campus de Beaulieu, 35042 Rennes Cedex, France\\
\texttt{\small \{Mihai.Gradinaru,Tristan.Haugomat\}@univ-rennes1.fr}}
\date{}
\maketitle

{\small\noindent {\bf Abstract:}~We modify the global Skorokhod topology, on the 
space of \cadlag paths, by localising with respect to space variable, in order to include the eventual explosions. The tightness of families of probability measures on the paths space endowed with this local Skorokhod topology  is studied and a  characterization of Aldous type is obtained. The local and  global Skorokhod topologies are compared by using a time change transformation. A number of results in the paper should play an important role when studying Lévy-type processes with unbounded coefficients by martingale problem approach. }\\
{\small\noindent{\bf Key words:}~\cadlag processes, explosion time, local Skorokhod topology, Aldous tightness criterion, time change transformation}\\
{\small\noindent{\bf MSC2010 Subject Classification:}~Primary~ 60B10; Secondary~60J75, 54E70, 54A10}

 
\section{Introduction}

The study of \cadlag Lévy-type processes has been an important challenge during 
the last twenty years. This was due to the fact that phenomena like jumps and
unbounded coefficients of characteristic exponent (or symbol) should be taken in consideration in order to get more  realistic models. 

To perform a systematic study of this kind of trajectories one needs, on one hand, 
to consider the space of \cadlag paths with some  appropriate topologies, e.g. 
Skorokhod's topologies. 
On the other hand it was a very useful observation that a unified manner to tackle a 
lot of questions about large classes of processes is the martingale problem approach. 
Identifying tightness is an important step when studying sequences of distributions 
of processes solving associated martingale problems and the Aldous criterion is one 
of the most employed. 

The martingale problem approach was used for several situations: diffusion processes, 
stochastic differential equations driven by Brownian motion, Lévy processes, Poisson random measures (see, for instance, Stroock \cite{St75}, Stroock and Varadhan \cite{SV06}, Kurtz \cite{Ku11}...). Several  technical  hypotheses (for instance, 
entire knowledge of the generator, bounded coefficients hypothesis, assumptions 
concerning explosions ...) provide some 
limitation on the conclusions of certain results, in particular, on convergence results.

The present paper constitutes our first step in studying Markov processes with explosion and, in particular in the martingale problem setting. It contains the study 
of  the so-called local Skorokhod topology and of a time change transformation of 
\cadlag paths. The detailed study of the martingale problem, of Lévy-type processes
and of some applications will be presented elsewhere (see \cite{GH117}).

One of our motivations is that we wonder whether the solution of a well-posed 
martingale problem is continuous with respect to the initial distribution? 
The classical approach when one needs to take in consideration the explosion of 
the solution is to compactify the state space by one point, say $\Delta$, and to endow the \cadlag paths space 
by the Skorokhod topology (see for instance Ethier and Kurtz \cite{EK86}, Kallenberg 
\cite{Ka02}).  Unfortunately, this usual topology is not appropriate when we relax hypotheses on the martingale problem setting. 

The most simple example is provided by the differential equation 
\[\dot{x}_t=b(t,x_t),\quad t>0,\quad\mbox{starting from }\,x_0\in\R^d,\]
where $b:\R_+\times \R^d\to\R^d$ is a locally Lipschitz function. The unique maximal solution 
exists by setting $x_t=\Delta$,  after the explosion time. In general, for some $t>0$, the 
mapping $x_0\mapsto x_t$ is not continuous, and in particular $x_0\mapsto x_{\bullet}$ 
is not continuous for the usual (global) Skorokhod topology. As an illustration, let us 
consider
\[\dot{x}_t=(1-t)x_t^2,\quad t>0,\quad x_0\in\R.\]
To achieve the continuity of the mapping $x_0\mapsto x_{\bullet}$  our idea will be to localise 
the topology on the paths space, with respect to the time variable but also 
with respect to the space variable. More precisely, we need to consider uniform convergence 
until the exit time from some compact subset of $\R_+\times\R^d$. 

We adapt this idea to \cadlag paths 
by following a similar  approach as in Billingsley \cite{Bi99} and we get the local Skorokhod topology
which is weaker than the usual (global) Skorokhod topology. Then we describe the 
compactness and the tightness in connection with this topology. Furthermore, we state 
and prove a slight and new, at our knowledge, improvement of the Aldous criterion, which becomes an equivalence
in our setting.

Another novelty of our paper is the employ of a time change transformation 
(see for instance Ethier and Kurtz \cite{EK86}, pp. 306-311) to compare the local Skorokhod topology 
with the usual  (global) Skorokhod topology. Roughly speaking, the time change 
of the \cadlag path $x$ by the positive continuous function $g$ is 
$(g\mycdot x)_t:=x_{\tau_t}$ with  $\tau_t$ the unique  solution starting from $0$ 
of $\dot{\tau}_t:=g(x_{\tau_t})$.

Our paper is organised as follows: the following section is mainly devoted to the study 
of the local Skorokhod topology on spaces of \cadlag paths: the main result is a 
tightness criterion.  Properties of the time change mapping, in particular the 
continuity, and the connection between the local and global Skorokhod  topologies 
are described in Section 3. The last section contains technical proofs, based on local 
Skorokhod metrics, of results stated in \S2.

\section{Paths spaces}
\subsection{Local spaces of \cadlag  paths}
Let $S$ be a locally compact Hausdorff space with countable base. This topological feature is equivalent with the fact that $S$ could be endowed with a metric which is separable and have compact balls,
so $S$ is a Polish space. Take $\Delta\not\in S$, and we will denote by 
$S^\Delta\supset S$ 
the one-point compactification of $S$, if $S$ is not compact, 
or the topological sum $S\sqcup\{\Delta\}$, if $S$ is compact  (so $\Delta$ is an isolated point).  Clearly, $S^\Delta$ is a compact Hausdorff space with countable base which could 
be also endowed with a metric. This latter metric will be used to construct various useful functions, compact and open subsets. 

For any topological space $A$ and any subset $B\subset\R$, we will denote by $\rmC(A,B)$ the set of continuous functions from $A$ to $B$, and by $\rmC_b(A,B)$ its subset of bounded continuous functions. We will abbreviate $\rmC(A):=\rmC(A,\R)$ and $\rmC_b(A):=\rmC_b(A,\R)$. All along the paper we will denote $C\Subset A$ for a subset $C$ which is compactly embedded in $A$.

We start with the definition our  spaces of trajectories:
\begin{definition}[Spaces of \cadlag paths]\label{defCADLAG}
Define 
the space of exploding \cadlag paths
\begin{equation*}
\Dexp(S):=\left\{ x:[0,T_{\max})\rightarrow S\mid \begin{array}{l}
0\leq T_{\max}\leq\infty,\\
\forall t_0\in[0,T_{\max})\quad x_{t_0}=\displaystyle\lim_{t\downarrow t_0}x_t,\\
\forall t_0\in(0,T_{\max})\quad x_{t_0-}:=\displaystyle\lim_{t\uparrow t_0}x_t~\text{exists in }S\\
\end{array}\right\}\,.
\end{equation*}
For a path from  $\Dexp(S)$, $x:[0,T_{\max})\rightarrow S$, we will denote  $\xi(x):=T_{\max}$. We identify $\Dexp(S)$ with a subset of $(S^\Delta)^{\R_+}$ by using the mapping
\[\begin{array}{ccc}
\Dexp(S) & \hookrightarrow & (S^\Delta)^{\R_+}\\
x & \mapsto & (x_t)_{t\geq 0}
\end{array}\hspace{1.5cm}\text{with }\quad x_t:=\Delta\quad \text{ if }\quad t\geq\xi(x).
\]
We define the local \cadlag space as the subspace
\begin{equation}\label{Dloc}
\Dloc(S):=\left\{x\in\Dexp(S)\mid \xi(x)\in(0,\infty)\text{ and }\{x_s\}_{s<\xi(x)}\Subset S\text{ imply } x_{\xi(x)-}\text{ exists}\right\}.
\end{equation}
We also introduce the global \cadlag space as the  subspace of $\Dloc(S)$
\[
\D(S):=\left\{x\in\Dloc(S)\mid \xi(x)=\infty\right\}\subset S^{\R_+}.
\]
We will always denote by $X$ the canonical process on $\Dexp(S)$, $\Dloc(S)$ 
and $\D(S)$ without danger of confusion.  We endow each of $\Dexp(S)$, 
$\Dloc(S)$ and $\D(S)$ with a $\sigma$-algebra $\Fc:=\sigma(X_s,~0\leq s <\infty)$ 
and a filtration $\Fc_t:=\sigma(X_s,~0\leq s \leq t)$. We will always skip the argument 
$X$ for the explosion time $\xi(X)$ of the  canonical process.
\end{definition}
The following result provides an useful class of measurable mappings:
\begin{proposition}\label{propMes}
For $t_0\in\R_+$, the mapping
\[\begin{array}{ccc}
\Dexp(S)\times [0,t_0]&\to&S^\Delta\\
(x,t)&\mapsto&x_t
\end{array}\]
is $\Fc_{t_0}\otimes \Bc([0,t_0])$-measurable. For $t_0\in\R_+^*$, the set
\[
A:=\left\{(x,t)\in\Dexp(S)\times (0,t_0]\mid x_{t-}\text{ exists in }S^\Delta\right\}
\]
belongs to $\Fc_{t_0-}\otimes \Bc((0,t_0])$ and the mapping
\[\begin{array}{ccc}
A&\to&S^\Delta\\
(x,t)&\mapsto&x_{t-}
\end{array}\]
is $\Fc_{t_0-}\otimes \Bc((0,t_0])$-measurable.
For $U$  an open subset of $S$ and for $t_0\in\R_+$, the set
\[
B:=\left\{(x,s,t)\in\Dexp(S)\times[0,t_0]^2\mid \{x_u\}_{s\wedge t\leq u< s\vee t}\Subset U\right\}
\]
belongs to $\Fc_{t_0-}\otimes \Bc([0,t_0])^{\otimes 2}$ and the mapping
\[\begin{array}{ccc}
B\times\rmC(U)&\to&\R\\
(x,s,t,h)&\mapsto&\displaystyle\int_s^th(x_u)\d u
\end{array}\]
is $\Fc_{t_0-}\otimes \Bc([0,t_0])^{\otimes 2}\otimes\Bc(\rmC(U))$-measurable.
\end{proposition}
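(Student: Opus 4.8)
The common thread for the three assertions is that at a fixed time $q\ge0$ the evaluation $x\mapsto x_q$ is $\Fc_q$-measurable, by the very definition of the filtration. The plan is therefore to express each quantity in the statement as a countable combination --- limit, supremum, infimum, indicator --- of such evaluations, using the right-continuity of \cadlag paths to replace ``for every real time $u$'' by ``for every rational time $u$''; the $\sigma$-algebras $\Fc_{t_0-}$ will then appear on their own, because the quantities in question depend only on the restriction of $x$ to $[0,t_0)$. For the first map this is just progressive measurability of the canonical process: I would replace $t$ by the first dyadic time of level $n$ that is $\ge t$, capped at $t_0$, obtaining maps $\phi_n$ with finitely many evaluation times, each $\Fc_{t_0}\otimes\Bc([0,t_0])$-measurable, with $\phi_n(x,t)\to x_t$ in $S^\Delta$ for every $(x,t)$ by right-continuity; since $S^\Delta$ is metrisable, the limit $(x,t)\mapsto x_t$ is measurable. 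Arranging the approximating times to stay strictly below $t_0$ whenever $t<t_0$, the same argument yields the refinement that $(x,t)\mapsto x_t$ is $\Fc_{t_0-}\otimes\Bc([0,t_0))$-measurable on $\Dexp(S)\times[0,t_0)$, which is what the other two parts need.

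For the second map I would fix a metric $d$ on the compact space $S^\Delta$ and, for $(x,t)\in\Dexp(S)\times(0,t_0]$ and $n\ge1$, set
\[
\omega_n(x,t):=\sup\bigl\{\,d(x_q,x_{q'}):q,q'\in\Q\cap[0,t_0),\ t-\tfrac1n<q,q'<t\,\bigr\}.
\]
By right-continuity the supremum is unchanged if $q,q'$ range over all reals in $(t-\tfrac1n,t)\cap[0,\infty)$, so, $S^\Delta$ being compact hence complete, $x_{t-}$ exists in $S^\Delta$ if and only if $\inf_n\omega_n(x,t)=0$. Each $\omega_n$ is a countable supremum of the maps $(x,t)\mapsto d(x_q,x_{q'})\1_{\{q<t<q+1/n\}}\1_{\{q'<t<q'+1/n\}}$, which are $\Fc_{t_0-}\otimes\Bc((0,t_0])$-measurable since $q,q'<t_0$; hence $A=\bigcap_k\bigcup_n\{\omega_n<1/k\}$ lies in $\Fc_{t_0-}\otimes\Bc((0,t_0])$. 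On $A$ one has $x_{t-}=\lim_n x_{q_n(t)}$ with $q_n(t):=(\lceil 2^nt\rceil-1)2^{-n}\in[0,t)$, and $(x,t)\mapsto x_{q_n(t)}$ is $\Fc_{t_0-}\otimes\Bc((0,t_0])$-measurable because $q_n(t)<t\le t_0$; passing to the limit gives the measurability of $(x,t)\mapsto x_{t-}$ on $A$.

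For the third map, observe that $S$ is open in $S^\Delta$, hence so is $U$, so $F:=S^\Delta\setminus U$ is a nonempty compact set and $\rho(y):=d(y,F)$ is continuous on $S^\Delta$ with $\{\rho>0\}=U$. Because $[s\wedge t,s\vee t)\subset[0,t_0)$ and $S^\Delta$ is compact, $\{x_u:s\wedge t\le u<s\vee t\}\Subset U$ holds exactly when the closure of this set avoids $F$, i.e. when $\inf_{s\wedge t\le u<s\vee t}\rho(x_u)>0$ (the infimum over an empty interval being $+\infty$); by right-continuity this infimum equals $\inf\{\rho(x_q):q\in\Q\cap[0,t_0),\ s\wedge t\le q<s\vee t\}$, a countable infimum of $\Fc_{t_0-}\otimes\Bc([0,t_0])^{\otimes2}$-measurable functions, so $B\in\Fc_{t_0-}\otimes\Bc([0,t_0])^{\otimes2}$. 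For the integral I would write $\int_s^t h(x_u)\,\d u=\operatorname{sgn}(t-s)\int_{[0,t_0]}\1_{[s\wedge t,\,s\vee t)}(u)\,h(x_u)\,\d u$; on the set where the indicator is nonzero one has $u<t_0$ and $x_u\in U$, there $(x,u)\mapsto x_u$ is $\Fc_{t_0-}\otimes\Bc([0,t_0))$-measurable into $U$ by the refinement above, the evaluation $\rmC(U)\times U\to\R$, $(h,y)\mapsto h(y)$ is continuous ($U$ being locally compact and $\rmC(U)$ carrying the compact-open topology), and the remaining factors are Borel in $(s,t,u)$, so the integrand is jointly measurable. For fixed $(x,s,t,h)\in B\times\rmC(U)$ the range of $u\mapsto x_u$ on $[s\wedge t,s\vee t)$ has compact closure in $U$, so $h$ is bounded on it and the integrand is bounded and supported on a set of finite length; Fubini's theorem then gives the required $\Fc_{t_0-}\otimes\Bc([0,t_0])^{\otimes2}\otimes\Bc(\rmC(U))$-measurability of $(x,s,t,h)\mapsto\int_s^t h(x_u)\,\d u$. (Alternatively one may approximate this integral by its left-endpoint Riemann sums on dyadic subdivisions of $[s\wedge t,s\vee t)$, which are measurable by the same ingredients and converge pointwise because $u\mapsto h(x_u)$ is regulated, hence Riemann integrable.)

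The dyadic approximations are routine; the delicate points, and where I expect the work to lie, are: (i) checking that the passage from a supremum or infimum over all real times to its countable rational version is an \emph{equality} and not merely an inequality, which is precisely where right-continuity of \cadlag paths enters and needs a short argument; (ii) the bookkeeping ensuring that only evaluations at times strictly below $t_0$ occur, so that one genuinely lands in $\Fc_{t_0-}$ and not in $\Fc_{t_0}$; and (iii) in the integral statement, the joint measurability of $(x,s,t,h)\mapsto h(x_u)$, which rests on the joint continuity of the evaluation map on $\rmC(U)\times U$ (needing local compactness of $U$ and the compact-open topology on $\rmC(U)$) combined with the measurability of the resulting parametrised integral --- this last combination being the main obstacle.
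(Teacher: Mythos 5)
Your proof is correct and follows essentially the same strategy as the paper's: every quantity is reduced to countable suprema, infima and limits of evaluations at fixed rational or dyadic times kept strictly below $t_0$, with right-continuity justifying the passage from real to rational times --- your Cauchy-criterion description of $A$ and your left-dyadic approximation of $x_{t-}$ are exactly the paper's argument. The only minor divergences are that for $B$ you use the distance to $S^\Delta\setminus U$ where the paper uses an exhausting sequence of compact subsets of $U$, and for the integral your primary route is joint measurability of the integrand plus Fubini, whereas the paper goes directly through the Riemann sums you relegate to a parenthetical alternative; both variants are sound.
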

Before proving this proposition we state a corollary which give an useful class of stopping times:
\begin{corollary}\label{corST}
For any $(\Fc_t)$-stopping time $\tau_0$, $\Uc$ an open subset of $S^2$, $h\in\rmC(\Uc,\R_+)$ a continuous function and $M:\Dexp(S)\to[0,\infty]$ a $\Fc_{\tau_0}$-measurable map, the mapping
\[
\tau:=\inf\Big\{t\geq \tau_0\,|\,\{(X_{\tau_0},X_s)\}_{\tau_0\leq s\leq t}\not\Subset \Uc\text{ or }\int_{\tau_0}^th(X_{\tau_0},X_s)\d s\geq M\Big\}
\]
is a $(\Fc_t)$-stopping time. 
In particular, $\xi$ is a stopping time. Furthermore, if $U\subset S$ is an open subset, 
\begin{equation}\label{eqtauU}
\tau^U:=\inf\left\{t\geq 0\mid X_{t-}\not\in U\text{ or }X_t\not\in U\right\}\leq\xi
\end{equation}
is a stopping time.
\end{corollary}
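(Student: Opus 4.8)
The plan is to deal with the two ``reasons to stop'' separately. Write
\[
\sigma:=\inf\left\{t\ge\tau_0\mid\{(X_{\tau_0},X_s)\}_{\tau_0\le s\le t}\not\Subset\Uc\right\}
\]
for the exit from $\Uc$. Since $t\mapsto\int_{\tau_0}^th(X_{\tau_0},X_s)\d s$ is continuous and non-decreasing for $t\in[\tau_0,\sigma)$ — on which interval the graph $\{(X_{\tau_0},X_s)\}_{\tau_0\le s\le t}$ remains $\Subset\Uc$, so $h$ is bounded along it — one gets
\[
\{\tau\le t\}=\{\sigma\le t\}\ \cup\ \Big(\{\sigma>t\}\cap\{\tau_0\le t\}\cap\big\{\int_{\tau_0}^th(X_{\tau_0},X_s)\d s\ge M\big\}\Big).
\]
Thus it suffices to prove that both events on the right lie in $\Fc_t$ for every $t\ge0$. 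As $(\Fc_t)$ is only the natural filtration, hence not right-continuous, the real point is to obtain membership in $\Fc_t$ itself, not merely in $\Fc_{t+}$.

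The key observation for $\sigma$ is topological: because $X$ is right-continuous, $\Uc$ is open and $S$ is locally compact, there is no instantaneous escape from $\Uc$ at time $t^+$; precisely, if $\{(X_{\tau_0},X_s)\}_{\tau_0\le s\le t}\Subset\Uc$ then the same holds over $[\tau_0,t']$ for some $t'>t$. To see this, pick a product neighbourhood $V\times V'$ of $(X_{\tau_0},X_t)$ with $\overline{V}\times\overline{V'}$ compact and contained in $\Uc$, and use right-continuity of $X$ at $t$ to keep $X_s$ and all the left limits $X_{s-}$, $s\in(t,t']$, inside $\overline{V'}$; one uses here that a \cadlag path is relatively compact over a compact time interval on which it stays in $S$ (the same fact makes the closure in the definition of $\sigma$ automatically compact as long as $\Delta$ has not been hit). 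Combining this no-escape property with monotonicity in $t$ gives
\[
\{\sigma\le t\}=\{\tau_0\le t\}\cap\left\{\{(X_{\tau_0},X_s)\}_{\tau_0\le s\le t}\not\Subset\Uc\right\}.
\]

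The step I expect to be the main obstacle is showing that this last event, intersected with $\{\tau_0\le t\}$, belongs to $\Fc_t$: the simultaneous dependence on the stopping time $\tau_0$ and on the random base point $X_{\tau_0}$ blocks a direct appeal to Proposition \ref{propMes}, which is stated for a fixed open subset of $S$. I would write $\Uc=\bigcup_k O_k\times O_k'$ as a countable union of products of open subsets of $S$ with $\overline{O_k}$, $\overline{O_k'}$ compact and $\overline{O_k}\times\overline{O_k'}\subset\Uc$; since any compact subset of $\Uc$ is covered by finitely many such products, one checks that on $\{\tau_0\le t\}$
\[
\left\{\{(X_{\tau_0},X_s)\}_{\tau_0\le s\le t}\Subset\Uc\right\}
=\bigcup_{G\subset\N\text{ finite}}\Big(\{X_{\tau_0}\in\bigcap_{k\in G}O_k\}\cap\big\{\{X_s\}_{\tau_0\le s\le t}\Subset\bigcup_{k\in G}O_k'\big\}\Big).
\]
Each term is in $\Fc_t$ on $\{\tau_0\le t\}$: the event $\{X_{\tau_0}\in\,\cdot\,\}$ is $\Fc_{\tau_0}$-measurable, a standard consequence of the first assertion of Proposition \ref{propMes}; and for a fixed open $V\subset S$ the event $\{\{X_s\}_{\tau_0\le s\le t}\Subset V\}$ is the preimage of the set $B$ of Proposition \ref{propMes} (taken with $t_0=t$ and this $V$) under the map $x\mapsto(x,\tau_0(x)\wedge t,t)$, which is measurable from $(\Dexp(S),\Fc_t)$ into $(\Dexp(S)\times[0,t]^2,\Fc_{t-}\otimes\Bc([0,t])^{\otimes2})$, after splitting off the value $X_t$ to pass from the half-open to the closed time interval used in $B$. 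This yields $\{\sigma\le t\}\in\Fc_t$.

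For the integral event one repeats exactly this reduction, using the integral functional of Proposition \ref{propMes} in place of the set $B$ and again the exhaustion of $\Uc$ to come down to a fixed open set; continuity of the integral has already been used to obtain the decomposition of $\{\tau\le t\}$ above. Hence $\tau$ is an $(\Fc_t)$-stopping time. The two ``in particular'' claims are the special cases $\tau_0=0$, $M\equiv\infty$: taking $\Uc=S\times S$ gives $\tau=\xi$, since $\{(X_0,X_s)\}_{0\le s\le t}\Subset S^2$ exactly when $t<\xi$ (again by the \cadlag relative-compactness fact); and taking $\Uc=S\times U$ gives $\tau=\xi\wedge\tau^U$, which equals $\tau^U$ because $X_\xi=\Delta\notin U$ forces $\tau^U\le\xi$.
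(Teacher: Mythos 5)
Your proof is correct and follows essentially the same route as the paper's: the paper defines a single $\Fc_t$-measurable variable $Y$ (equal to $-1$, to $\int_{\tau_0}^t h(X_{\tau_0},X_s)\,\d s$, or to $\infty$ according to whether $\tau_0>t$, the graph stays compactly embedded in $\Uc$, or not) and writes $\{\tau\le t\}=\{Y\ge M\}$, which is exactly your decomposition combined with your no-instantaneous-escape observation. The only difference is one of detail: you make explicit the reduction of the $S^2$-valued graph condition to Proposition \ref{propMes} via a countable cover of $\Uc$ by products, a step the paper subsumes under ``it is straightforward to obtain''.
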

\begin{proof}[Proof of Corollary \ref{corST}]
For each $t\geq 0$, using Proposition \ref{propMes} it is straightforward to obtain that
\[
Y:=\left\{\begin{array}{ll}
\displaystyle -1&\text{if }\tau_0>t,\\
\int_{\tau_0}^th(X_{\tau_0},X_s)\d s&\text{if }{\tau_0}\leq t\text{ and }\{(X_{\tau_0},X_s)\}_{{\tau_0}\leq s\leq t}\Subset \Uc,\\
\displaystyle\infty &\text{otherwise}.
\end{array}\right.
\]
is $\Fc_t$-measurable. Hence
\[
\{\tau\leq t\}=\{Y\geq M\}=\{Y\geq M\}\cap\{\tau_0\leq t\}\in\Fc_t,
\]
so $\tau$ is a $(\Fc_t)$-stopping time.
\end{proof}

\begin{proof}[Proof of Proposition \ref{propMes}]
Let $d$ be a complete metric for the topology of $S$, note that
\[
A = \bigcap_{\eps\in\Q_+^*}\bigcup_{\delta\in\Q_+^*}\bigcap_{q_1,q_2\in\Q_+\cap [0,t_0)}\left\{q_1,q_2\in[t-\delta,t)\Rightarrow d(x_{q_1},x_{q_2})\leq\eps\right\}
\]
so $A$ belongs to $\Fc_{t_0-}\otimes \Bc((0,t_0])$.
It is clear that
for each $n\in\N$
\[\begin{array}{ccc}
A&\to&S^\Delta\\
(x,t)&\mapsto&x_{\frac{t_0}{n+1}\left\lfloor\frac{nt}{t_0}\right\rfloor}
\end{array}\]
is $\Fc_{t_0-}\otimes \Bc((0,t_0])$-measurable, where $\lfloor r\rfloor$ denotes the integer part of the real number $r$. Letting $n\to\infty$ we obtain that $(x,t)\mapsto x_{t-}$ is $\Fc_{t_0-}\otimes \Bc((0,t_0])$-measurable. The proof is similar for $(x,t)\mapsto x_t$.
To prove that $B$ is measurable, let $(K_n)_{n\in\N}$  be an increasing sequence of compact subsets of $U$ such that $U=\bigcup_nK_n$. Then
\begin{align*}
B&=\bigcup_{n\in\N}\left\{(x,s,t)\in\Dexp(S)\times[0,t_0]^2\mid \{x_u\}_{s\wedge t\leq u< s\vee t}\subset K_n\right\}\\
&=\bigcup_{n\in\N}\bigcap_{\substack{q\in\Q_+\\q<t_0}}\left\{(x,s,t)\in\Dexp(S)\times[0,t_0]^2\mid s\wedge t\leq q< s\vee t\Rightarrow x_q\in K_n\right\},
\end{align*}
so $B\in\Fc_{t_0-}\otimes \Bc([0,t_0])^{\otimes 2}$. To verify the last part, let us note that for $n\in\N^*$ the mapping 
from $B\times\rmC(U)$
\[
(x,s,t,h)\mapsto\frac{{\rm sign}(t-s)}{n}\sum_{i=0}^{n-1}h(x_{\frac{it_0}{n}})\1_{s\wedge t\leq \frac{it_0}{n}<s\vee t}
\]
is $\Fc_{t_0-}\otimes \Bc([0,t_0])^{\otimes 2}\otimes\Bc(\rmC(U))$-measurable so, letting $n\to\infty$, 
the same thing is true for the mapping
\[\begin{array}{ccc}
B\times\rmC(U)&\to&\R\\
(x,s,t,h)&\mapsto&\displaystyle\int_s^th(x_u)\d u.
\end{array}\]
\end{proof}

We end this section by recalling the definition of a Markov family:
\begin{definition}[Markov family]
Let $(\Gc_t)_{t\geq 0}$ be a filtration containing $(\Fc_t)_{t\geq 0}$. A family of probability measures $(\Pbf_a)_{a\in S}\in\Pcal(\Dexp(S))^S$ is called $(\Gc_t)_t$-Markov if
\begin{enumerate}
\item[a)] for any $B\in\Fc$: $a\mapsto\Pbf_a(B)$ is measurable,
\item[b)] for any $a\in S$: $\Pbf_a(X_0=a)=1$,
\item[c)] for any $a\in S$, $B\in\Fc$ and $t_0\in\R_+$: $\Pbf_a\left((X_{t_0+t})_t\in B\mid\Gc_{t_0}\right)=\Pbf_{X_{t_0}}(B)$, $\Pbf_a$-almost surely, where $\Pbf_\Delta$ is the unique element of $\Pcal(\Dexp(S))$ such that $\Pbf_\Delta(\xi=0)=1$.
\end{enumerate}
If the last property is also satisfied by replacing $t_0$ with any $(\Gc_t)_t$-stopping time,
the family of probability measures  is called $(\Gc_t)_t$-strong Markov.
\end{definition}

\begin{remark}
1) If $\Gc_t=\Fc_t$ we just say that the family is (strong) Markov.\\
2) If $\nu$ is a measure on $S^\Delta$ we set $\Pbf_\nu:=\int\Pbf_a\nu(\d a)$. Then the distribution of $X_0$ under $\Pbf_\nu$ is $\nu$, 
and $\Pbf_\nu$ satisfies the (strong) Markov property.
\end{remark}

\subsection{Local Skorokhod topology}

To simplify some statements, in this section we will consider a metric $d$ on $S$. However, we will prove that the construction does not depend on a particular choice of $d$.

To describe the convergence of a sequence $(x^k)_{k\in\N}
\subset\Dloc(S)$ for our topology on $\Dloc(S)$,
we need to introduce the following two spaces: we denote by $\widetilde{\Lambda}$ the space of increasing bijections from $\R_+$ to $\R_+$, and by $\Lambda\subset\widetilde{\Lambda}$ the space of increasing bijections $\lambda$ with $\lambda$ and $\lambda^{-1}$ locally Lipschitz. For $\lambda\in\widetilde{\Lambda}$ and $t\in\R_+$ we denote
\begin{equation}\label{tnorme}
\|\lambda-{\rm id}\|_t:=\sup_{0\leq s\leq t}|\lambda_s -s|=\|\lambda^{-1}-{\rm id}\|_{\lambda_t}.
\end{equation}
For $\lambda\in\Lambda$, let $\dot{\lambda}\in{\rm L}_{\rm loc}^\infty(\d s)$ be the density of $\d \lambda$ with respect to the Lebesgue measure. This density is non-negative and  locally bounded below, and for $t\in\R_+$ denote
\[
\|\log\dot{\lambda}\|_{t}:=\esssup_{0\leq s\leq t}\|\log\dot{\lambda}_s\|=\sup_{0\leq s_1<s_2\leq t}\left|\log\frac{\lambda_{s_2}-\lambda_{s_1}}{s_2-s_1}\right|=\left\|\log\left(\frac{\d\lambda^{-1}_s}{\d s}\right)\right\|_{\lambda_t}.
\]

The proofs of the following theorems 
use the strategy developed in \S 12, pp. 121-137 from \cite{Bi99}, and are postponed to Section \ref{secSkoMet}.

\begin{theorem}[Local Skorokhod topology]\label{thmLocSkoTop}
There exists a unique Polish topology on $\Dloc(S)$, called the local Skorokhod topology, such that a sequence $(x^k)_{k\in\N}$ converges 
to $x$ for this topology if and only if there exists a sequence $(\lambda^k)_{k\in\N}$ in $\Lambda$ such that
\begin{itemize}
\item either $\xi(x)<\infty$ and $\{x_s\}_{s<\xi(x)}\Subset S$: $\lambda^k_{\xi(x)}\leq\xi(x^k)$ for $k$ large enough and
\[
\sup_{s<\xi(x)}d(x_s,x^k_{\lambda^k_s})\cv{}0,\hspace{1cm}
x^k_{\lambda^k_{\xi(x)}}\cv{}\Delta,\hspace{1cm}\|\log\dot{\lambda}^k\|_{\xi(x)}\cv{}0,\quad\mbox{ as }k\to\infty,
\]
\item or $\xi(x)=\infty$ or $\{x_s\}_{s<\xi(x)}\not\Subset S$: for all $t<\xi(x)$, for $k$ large enough $\lambda^k_t<\xi(x^k)$ and
\[
\sup_{s\leq t}d(x_s,x^k_{\lambda^k_s})\cv{}0,\hspace{2cm}\|\log\dot{\lambda}^k\|_t\cv{}0,\quad\mbox{ as }k\to\infty.
\]
\end{itemize}
The local Skorokhod topology does not depend on the metric $d$, but only on the topology of $S$. Moreover the Borel $\sigma$-algebra $\Bc(\Dloc(S))$ coincides with the $\sigma$-algebra $\Fc$.
\end{theorem}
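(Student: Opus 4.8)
The plan is to follow the strategy of Billingsley \cite[\S 12]{Bi99} for the global space, the only genuinely new ingredient being that the Skorokhod comparison of two paths must be localised in the space variable. I would fix a metric $d$ on $S$ with compact balls and an exhaustion $U_1\subset\overline{U_1}\subset U_2\subset\overline{U_2}\subset\cdots$ of $S$ by relatively compact open sets (which exists since $S$ is locally compact with countable base). For $x\in\Dloc(S)$ set $\sigma_n(x):=\tau^{U_n}(x)$, an $(\Fc_t)$-stopping time by Corollary \ref{corST}, and let $\hat x^{(n)}$ be the path equal to $X$ on $[0,\sigma_n(x))$ and to $\Delta$ afterwards. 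Since $\{X_s\}_{s<\sigma_n(x)}$ lies in the compact set $\overline{U_n}$ and the left limit $X_{\sigma_n(x)-}$ exists — it is a point of the open interval $(0,\xi)$ when $\sigma_n(x)<\xi$, and equals $X_{\xi-}$, which exists by the definition \eqref{Dloc}, when $\sigma_n(x)=\xi<\infty$ — the path $\hat x^{(n)}$ lies in the \emph{global} Skorokhod space $\D(\overline{U_n}^{\,\Delta})$ of the \emph{compact} space $\overline{U_n}^{\,\Delta}=\overline{U_n}\sqcup\{\Delta\}$, on which the classical theory provides a complete separable metric $\mathsf{d}_n$ inducing the global Skorokhod topology, with Borel $\sigma$-algebra generated by the coordinates. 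I would then define $\mathsf{d}(x,y):=\sum_{n\geq1}2^{-n}\big(1\wedge\mathsf{d}_n(\hat x^{(n)},\hat y^{(n)})\big)$ and prove that this local Skorokhod metric is complete, separable, and induces the announced topology.

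Symmetry and the triangle inequality for $\mathsf{d}$ are immediate; $\mathsf{d}(x,y)=0$ forces $\hat x^{(n)}=\hat y^{(n)}$ and $\sigma_n(x)=\sigma_n(y)$ for all $n$, and since the choice $\overline{U_n}\subset U_{n+1}$ guarantees $\sup_n\sigma_n(x)=\xi(x)$ — the path either eventually sits inside a fixed $\overline{U_N}$, whence $\sigma_{N+1}(x)=\xi(x)$, or it leaves every $U_n$ before exploding, using that a cadlag path has relatively compact range on every compact interval — this forces $x=y$. Separability passes level by level from that of each $\D(\overline{U_n}^{\,\Delta})$: $S$-valued piecewise-constant paths with rational data, killed at a rational time, are $\mathsf{d}$-dense. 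For completeness, a $\mathsf{d}$-Cauchy sequence $(x^k)$ is $\mathsf{d}_n$-Cauchy for each $n$, so $\hat x^{(n),k}$ converges to some $y^{(n)}$; one checks that the $(y^{(n)})_n$ are consistent, patch into a single path $x$, and that $x\in\Dloc(S)$ with $\sigma_n(x)=\xi(y^{(n)})$. The point deserving care is the $\Dloc$-condition at the explosion time: if $\xi(x)<\infty$ and $\{X_s\}_{s<\xi(x)}\Subset S$, the range lies in some $\overline{U_N}$, hence $x=y^{(N)}$ near $\xi(x)$, and the required left limit $x_{\xi(x)-}$ exists simply because $y^{(N)}$ is cadlag in the \emph{compact} space $\overline{U_N}^{\,\Delta}$; then $\mathsf{d}(x^k,x)\to0$.

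The core of the argument is to match $\mathsf{d}$-convergence with the stated characterisation. Here I would rewrite $\mathsf{d}_n(\hat x^{(n),k},\hat x^{(n)})\to0$, for each $n$, through the classical description of convergence in $\D(\overline{U_n}^{\,\Delta})$ — existence of time changes $\mu^{n,k}$ with $\|\log\dot\mu^{n,k}\|$ tending to $0$ on compacts, along which the stopped paths converge uniformly on compacts — and then merge the $\mu^{n,k}$ into one sequence $(\lambda^k)\subset\Lambda$ by a diagonal argument exploiting the compatibility of these time changes across levels. The dichotomy in the statement corresponds exactly to the two behaviours of $x$: either $\xi(x)<\infty$ with $\{X_s\}_{s<\xi(x)}\Subset S$, so that the range sits in a fixed $\overline{U_N}$ and $\hat x^{(m),k}$ is forced to be dead at time $\lambda^k_{\xi(x)}$ for every $m\geq N$ — which yields both $\lambda^k_{\xi(x)}\leq\xi(x^k)$ and $x^k_{\lambda^k_{\xi(x)}}\to\Delta$, while $\|\log\dot{\lambda}^k\|_{\xi(x)}\to0$ comes from the slope control up to the death time (first bullet) — or $x$ escapes every $\overline{U_n}$, or survives forever, which gives uniform convergence only on the compacts $[0,t]$, $t<\xi(x)$ (second bullet); the converse implication is a direct check. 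I expect this step — and, within it, calibrating the localisation so that completeness and the precise first-bullet behaviour hold simultaneously — to be the main obstacle.

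It remains to dispose of the last three assertions. Uniqueness is formal: a Polish topology is metrisable, hence sequential, hence determined by its convergent sequences, so the metrisable topology we exhibit is the only one with the prescribed convergent sequences. Independence of $d$ follows because every condition in the two bullets is imposed along a range that is relatively compact in $S$ — $\{x_s\}_{s<\xi(x)}$ in the first case, and $\{x_s\}_{s\leq t}$ in the second, which is relatively compact since a cadlag path on a compact interval has totally bounded range — on which any two metrics compatible with the topology of $S$ are uniformly equivalent, while $x^k_{\lambda^k_{\xi(x)}}\to\Delta$ is purely topological; hence two choices of $d$ yield the same convergent sequences and, by uniqueness, the same topology. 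Finally, for $\Bc(\Dloc(S))=\Fc$: the inclusion $\subseteq$ holds because, by separability, $\Bc(\Dloc(S))$ is generated by the balls $\{\mathsf{d}(z,\cdot)<q\}$ with $z$ in a countable dense set, and $x\mapsto\mathsf{d}(z,x)$ is $\Fc$-measurable — each $\mathsf{d}_n(\hat z^{(n)},\hat x^{(n)})$ being an infimum over a countable family of rational piecewise-linear time changes of quantities built, via Proposition \ref{propMes} and the fact that $\sigma_n$ is an $(\Fc_t)$-stopping time, from $x\mapsto X_t$, $x\mapsto X_{t-}$ and integrals along $X$; the inclusion $\supseteq$ holds because each map $x\mapsto\hat x^{(n)}$ is $\mathsf{d}$-continuous into $\D(\overline{U_n}^{\,\Delta})$, where coordinates are Borel, so that $X_s$, which agrees with $\hat x^{(n)}_s$ on the Borel set $\{\sigma_n(x)>s\}$ and equals $\Delta$ on the complement of $\bigcup_n\{\sigma_n(x)>s\}=\{\xi>s\}$, is Borel, whence $\Fc=\sigma(X_s,s\geq0)\subseteq\Bc(\Dloc(S))$.
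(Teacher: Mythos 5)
Your architecture is genuinely different from the paper's: rather than defining localised pseudo-metrics directly, you reduce to the classical Skorokhod spaces $\D(\overline{U_n}^{\,\Delta})$ of paths killed at the exit times $\sigma_n=\tau^{U_n}$. Unfortunately this reduction has a fatal gap, located exactly at the step you flag as the main obstacle (matching $\mathsf{d}$-convergence with the stated characterisation): the killing map $x\mapsto\hat x^{(n)}$ is \emph{not} continuous from the topology described in the theorem to $\D(\overline{U_n}^{\,\Delta})$, because first exit times are not continuous functionals of the path. Concretely, take $S=\R$, fix $n$ with $U_n$ nonempty and relatively compact, pick $p\in\overline{U_n}\setminus U_n$ and $p_k\in U_n$ with $p_k\to p$, and let $x\equiv p$ and $x^k\equiv p_k$ be constant paths. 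Then $x^k\to x$ in the sense of the second bullet of the theorem (take $\lambda^k=\mathrm{id}$), but $\sigma_n(x)=0$, so $\hat x^{(n)}\equiv\Delta$, while $\sigma_n(x^k)=\infty$, so $\hat x^{(n),k}\equiv p_k$; since $\Delta$ is isolated in $\overline{U_n}^{\,\Delta}$, the quantity $\mathsf{d}_n(\hat x^{(n),k},\hat x^{(n)})$ is bounded away from $0$, hence so is $\mathsf{d}(x^k,x)$. Your metric therefore induces a strictly finer topology than the one the theorem prescribes, and the claimed equivalence of $\mathsf{d}$-convergence with the two-bullet characterisation is false. No choice of the exhaustion repairs this: for a connected non-compact $S$ every nonempty relatively compact open $U_n$ has nonempty boundary, and paths sitting on that boundary are discontinuity points of $x\mapsto\hat x^{(n)}$.

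The cure is precisely the paper's device: do not cut the paths at a rigidly defined exit time, but compare $x^1$ and $x^2$ after cutting each at an \emph{arbitrary} time $t_i\le\xi(x^i)$ over which one takes an infimum, charging only the soft penalty $d(x^i_{t_i},K^c)\wedge(t-t_i)_+$ for cutting while still inside $K$ and before time $t$. This yields the pseudo-metrics $\widetilde\rho_{t,K}$ and $\rho_{t,K}$, whose induced convergence does match the two bullets, and one then sums $2^{-n}\rho_{n,K_n}\wedge1$ over an exhaustive sequence of compacts exactly as you sum your $\mathsf{d}_n$. The remaining parts of your programme (uniqueness via metrisability, independence of $d$ via uniform equivalence of metrics on compacts, separability via piecewise-constant paths, completeness via a telescoped sequence of time changes, and the two inclusions for $\Bc(\Dloc(S))=\Fc$) are sound in outline and close to what the paper does, but the $\supseteq$ inclusion for the $\sigma$-algebras must also be rerouted: it cannot rest on continuity of $x\mapsto\hat x^{(n)}$, which fails for the same reason as above.
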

\begin{theorem}\label{th2ndCarCvg}
The local Skorokhod topology is also described by a similar characterisation with $\lambda^k\in\Lambda$ and $\|\log\dot{\lambda}^k\|$ replaced, respectively, by $\lambda^k\in\widetilde{\Lambda}$ and  $\|\lambda^k -{\rm id}\|$.
\end{theorem}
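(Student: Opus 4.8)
The plan is to show that a sequence $(x^k)_{k\in\N}\subset\Dloc(S)$ converges to $x$ for the local Skorokhod topology \emph{if and only if} there is a sequence $(\lambda^k)_{k\in\N}\subset\widetilde\Lambda$ satisfying the two lists of conditions of Theorem~\ref{thmLocSkoTop} with $\|\lambda^k-{\rm id}\|$ in place of $\|\log\dot\lambda^k\|$. Since the local Skorokhod topology is Polish by Theorem~\ref{thmLocSkoTop}, hence metrizable and sequential, this equivalence of convergent sequences does prove the claim. The proof splits into the obvious implication and a construction patterned on \S12 of \cite{Bi99}.

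For the direct implication I would use the elementary estimate, valid for every $\lambda\in\Lambda$ and $t\geq0$: writing $\lambda_s-s=\int_0^s(\dot\lambda_u-1)\,\d u$ and bounding $\rme^{-\|\log\dot\lambda\|_t}\leq\dot\lambda_u\leq\rme^{\|\log\dot\lambda\|_t}$ for a.e. $u\in[0,t]$ gives
\[
\|\lambda-{\rm id}\|_t\leq t\bigl(\rme^{\|\log\dot\lambda\|_t}-1\bigr).
\]
Consequently, if $(x^k)$ converges to $x$ for the local Skorokhod topology and $(\lambda^k)\subset\Lambda$ is a witnessing sequence provided by Theorem~\ref{thmLocSkoTop}, then the very same sequence, now regarded as lying in $\widetilde\Lambda$, witnesses convergence in the second sense: every condition involving $x$, $x^k$, $\xi$ and the jump to $\Delta$ is literally unchanged, while $\|\log\dot\lambda^k\|_t\to0$ forces $\|\lambda^k-{\rm id}\|_t\to0$.

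For the converse I would, starting from a witnessing sequence $(\lambda^k)\subset\widetilde\Lambda$ for the second characterisation, construct a witnessing sequence $(\mu^k)\subset\Lambda$ for the first one by piecewise-affine approximation adapted to the jumps of $x$. Fix a horizon $T$: in the first case $T:=\xi(x)$, using that $x_{\xi(x)-}$ exists (since $x\in\Dloc(S)$ and $\{x_s\}_{s<\xi(x)}\Subset S$), so that $x$ is a \cadlag path on the compact interval $[0,T]$; in the second case $T$ is any real number $<\xi(x)$, and $x$ is an $S$-valued \cadlag path on $[0,T]$. Given $\delta>0$, a \cadlag path on a compact interval has only finitely many jumps of size $\geq\delta$, so there is a finite subdivision $0=s_0<\cdots<s_m=T$ on each of whose cells $[s_{i-1},s_i)$ the oscillation of $x$ is $<\delta$; put $\eta:=\min_i(s_i-s_{i-1})>0$. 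Let $\mu^k$ be the continuous, piecewise-affine map that agrees with $\lambda^k$ at $s_0,\dots,s_m$, extended to $\R_+$ with unit slope beyond $T$; then $\mu^k\in\Lambda$. Writing $\mu^k-{\rm id}$ as the affine interpolation of $\lambda^k-{\rm id}$ on each cell gives $\|\mu^k-{\rm id}\|_T\leq\|\lambda^k-{\rm id}\|_T$; the slope of $\mu^k$ on $[s_{i-1},s_i]$ equals $(\lambda^k_{s_i}-\lambda^k_{s_{i-1}})/(s_i-s_{i-1})$, hence lies within $2\|\lambda^k-{\rm id}\|_T/\eta$ of $1$, so $\|\log\dot\mu^k\|_T\to0$; finally, for $s\in[s_{i-1},s_i)$ both $\mu^k_s$ and $\lambda^k_s$ lie in $[\lambda^k_{s_{i-1}},\lambda^k_{s_i})$, an interval on which $x^k$ oscillates by at most $\delta+2\alpha_k$, where $\alpha_k$ is the distance supremum appearing in the hypothesis (which tends to $0$), so that $d(x_s,x^k_{\mu^k_s})\leq d(x_s,x^k_{\lambda^k_s})+d(x^k_{\lambda^k_s},x^k_{\mu^k_s})\leq\delta+3\alpha_k$. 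The endpoint and $\Delta$-conditions transfer because $\mu^k=\lambda^k$ at $s_m=T$. To conclude I would run a diagonal extraction: take $\delta_j\downarrow0$ (and, in the second case, horizons $t_j\uparrow\xi(x)$), obtain strictly increasing thresholds $K_j$ beyond which $\mu^{k,j}$ satisfies all the above estimates with $\delta_j$, and set $\mu^k:=\mu^{k,j}$ for $K_j\leq k<K_{j+1}$; then $(\mu^k)\subset\Lambda$ is a witnessing sequence in the sense of Theorem~\ref{thmLocSkoTop} (for each fixed $t<\xi(x)$ one eventually has $t_j>t$, so the estimates on $[0,t]$ hold).

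The one genuinely delicate point is the bound $d(x_s,x^k_{\mu^k_s})\leq\delta+3\alpha_k$: it rests on the fact that replacing $\lambda^k$ by its affine interpolant $\mu^k$ moves the time argument only within a block $[\lambda^k_{s_{i-1}},\lambda^k_{s_i})$, and that on such a block $x^k$ inherits a small oscillation from the small oscillation of $x$ through $\alpha_k\to0$. Everything else — the slope estimate, the endpoint transfer, and the diagonal gluing — is routine bookkeeping, entirely parallel to \S12 of \cite{Bi99}.
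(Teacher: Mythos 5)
Your proposal is correct and follows essentially the same route as the paper: the converse direction is exactly the content of the paper's Lemma \ref{lmEqDist}, which performs the same piecewise-affine interpolation of a $\widetilde\Lambda$-change at the nodes of a subdivision adapted to $\omega^\prime_{t,K,x}$, with the same $\|\lambda^k-{\rm id}\|/\eta$ slope bound and the same oscillation-transfer estimate, merely packaged as a quantitative comparison of the two pseudo-metric families $\widetilde{\rho}_{t,K}$ and $\rho_{t,K}$ rather than as a direct manipulation of convergent sequences. The easy direction likewise corresponds to the paper's trivial inequality $\widetilde{\rho}_{t,K}\leq\rho_{t,K}$.
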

\begin{remark}
These convergence conditions may be summarised as: a sequence $(x_k)_k$ converges to $x$ for the local Skorokhod topology if and only if there exists a sequence $(\lambda^k)_k$ in $\Lambda$ satisfying that for any $t\in\R_+$ such that $\{x_s\}_{s<t}\Subset S$, for $k$ large enough $\lambda^k_t\leq\xi(x^k)$ and
\[
\sup_{s<t}d(x_s,x^k_{\lambda^k_s})\cv{}0,\quad x^k_{\lambda^k_t}\cv{}x_t,\quad\|\log\dot{\lambda}^k\|_t\cv{}0,\quad\mbox{ as }k\to\infty.
\qedhere\]
\end{remark}
 We point out that the topology on $\Dloc(S)$ does not depend on the metric $d$ of $S$ and this is a consequence of the fact that two metrics on a compact set are uniformly equivalent (cf. Lemma \ref{lmUU} below).
We recover the classical Skorokhod topology on $\D(S)$, which is described, for instance, in \S16 pp. 166-179 from \cite{Bi99}.
\begin{corollary}[Global Skorokhod topology]\label{corGlobSkoTop}
The trace topology from $\Dloc(S)$ to $\D(S)$ will be called the global Skorokhod topology. It is a Polish
topology and a sequence $(x^k)_k$ converges to $x$ for this topology if and only if there exists a sequence $(\lambda^k)_k$ in $\Lambda$ such that for all $t\geq 0$
\[
\sup_{s\leq t}d(x_s,x^k_{\lambda^k_s})\cv{}0,\hspace{2cm}\|\log\dot{\lambda}^k\|_t\cv{}0, \quad\mbox{ as }k\to\infty.
\]
Once again the global Skorokhod topology does not depend on the metric $d$, but only on the topology of $S$. Moreover the Borel $\sigma$-algebra $\Bc(\D(S))$ coincides with $\Fc$.
\end{corollary}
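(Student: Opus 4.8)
The plan is to read the statement off Theorems~\ref{thmLocSkoTop} and~\ref{th2ndCarCvg} by specialising to paths with $\xi=\infty$. By definition of the trace topology, a sequence $(x^k)_k$ converges to $x$ in $\D(S)$ precisely when $x^k,x\in\D(S)$ and $x^k\to x$ in $\Dloc(S)$; since $x\in\D(S)$ forces $\xi(x)=\infty$, only the second bullet of Theorem~\ref{thmLocSkoTop} can occur. In that bullet the requirement ``$\lambda^k_t<\xi(x^k)$ for $k$ large'' holds automatically because $\xi(x^k)=\infty$, so what is left is exactly $\sup_{s\leq t}d(x_s,x^k_{\lambda^k_s})\to 0$ and $\|\log\dot{\lambda}^k\|_t\to 0$ for every $t\geq 0$; this yields the stated convergence characterisation in both directions. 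The independence of the metric $d$ is then inherited from Theorem~\ref{thmLocSkoTop} with nothing further to check.

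For the assertion on $\sigma$-algebras I would write $\D(S)=\bigcap_{n\in\N}\{X_n\neq\Delta\}$; by Proposition~\ref{propMes} each set $\{X_n\neq\Delta\}$ belongs to $\Fc_n\subset\Fc=\Bc(\Dloc(S))$ (the last equality by Theorem~\ref{thmLocSkoTop}), so $\D(S)$ is Borel in $\Dloc(S)$. Since the Borel $\sigma$-algebra of a subspace is the trace of the ambient one, and the trace of a generated $\sigma$-algebra is generated by the traces of the generators, $\Bc(\D(S))$ is the trace of $\Bc(\Dloc(S))=\Fc$ on $\D(S)$, which is by definition the $\sigma$-algebra $\Fc$ on $\D(S)$.

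The only genuinely nontrivial point is that the trace topology is Polish. As $\Dloc(S)$ is Polish by Theorem~\ref{thmLocSkoTop} and a $G_\delta$ subspace of a Polish space is again Polish, it suffices to show $\D(S)=\{\xi=\infty\}=\bigcap_{n\in\N}\{\xi>n\}$ is a $G_\delta$, i.e.\ that $\xi\colon\Dloc(S)\to[0,\infty]$ is lower semicontinuous. I would verify sequential lower semicontinuity directly: given $x^k\to x$ with $(\lambda^k)_k$ as in Theorem~\ref{thmLocSkoTop}, applying $\|\log\dot{\lambda}^k\|_t\to 0$ with the endpoints $0$ and $t$ (and $\lambda^k_0=0$) gives $\lambda^k_t\to t$; combined with $\lambda^k_{\xi(x)}\leq\xi(x^k)$ in the first case of the theorem, or with $\lambda^k_t<\xi(x^k)$ for every $t<\xi(x)$ in the second, one gets $\liminf_k\xi(x^k)\geq\xi(x)$. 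Since $\Dloc(S)$ is metrizable, sequential lower semicontinuity is lower semicontinuity, so each $\{\xi>n\}$ is open, $\D(S)$ is $G_\delta$, and the trace topology is Polish.

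Finally, to justify the terminology I would remark that the convergence just described — or, via Theorem~\ref{th2ndCarCvg}, its equivalent form with $\lambda^k\in\widetilde{\Lambda}$ and $\|\lambda^k-{\rm id}\|_t\to 0$ — is precisely the classical description of Skorokhod convergence on the space of \cadlag paths $\R_+\to S$ recalled in \S16 of~\cite{Bi99}, so $\D(S)$ with the trace topology is the classical Skorokhod space. I expect the lower semicontinuity of $\xi$ (equivalently, the $G_\delta$ property of $\D(S)$, equivalently Polishness of the trace topology) to be the main obstacle; the convergence characterisation, the metric independence and the identification of the Borel $\sigma$-algebra are direct specialisations of the two theorems above.
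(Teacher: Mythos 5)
Your proposal is correct and follows essentially the same route as the paper: the paper's proof likewise reduces everything to Polishness of the trace topology and obtains it from $\D(S)=\bigcap_{n\in\N}\{\xi>n\}$ being a countable intersection of open sets. The only difference is that the paper simply asserts that $\{\xi>n\}$ is open, whereas you actually verify the lower semicontinuity of $\xi$ from the convergence characterisation of Theorem~\ref{thmLocSkoTop} — a worthwhile detail, and your argument for it is sound.
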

\begin{proof}
The only thing to prove is the fact that this topology is Polish. This is true because
\[
\D(S)=\bigcap_{n\in\N}\left\{x\in\Dloc(S)\mid\xi(x)>n\right\},
\]
so $\D(S)$ is a countable intersection of open subsets of $\Dloc(S)$.
\end{proof}
\begin{remark}
Again, as in Theorem \ref{th2ndCarCvg}, the global Skorokhod topology can be described by the similar characterisation with $\lambda^k\in\Lambda$ replaced by $\lambda^k\in\widetilde{\Lambda}$ and $\|\log\dot{\lambda}^k\|$ by $\|\lambda^k -{\rm id}\|$.
\end{remark}
\begin{remark}\label{rkOthLocSko}
1) We may also recover the Skorokhod topology on the set of \cadlag paths from $[0,t]$ to $S$, $\D([0,t],S)$. Let $S\sqcup\{\Delta\}$ be 
the topological space such that $\Delta$ is an isolated point. The Skorokhod topology on $\D([0,t],S)$ is the pullback topology by the 
injective mapping with closed range
\[\begin{array}{ccc}
\D([0,t],S)&\hookrightarrow&\D(S\sqcup\{\Delta\})\times S\\
x&\mapsto&\left(\left(\begin{array}{ll}
x_s&\text{if }s<t\\
\Delta&\text{if }s\geq t
\end{array}\right)_{s\geq 0},x_t\right).
\end{array}\]
2) To get a topology on $\Dexp(S)$, we may proceed as follows: if $\widetilde{d}$ is a metric on $S^\Delta$, then the topology on $\Dexp(S)$ is the pullback topology by the injective mapping with closed range
\[\begin{array}{ccc}
\Dexp(S)&\hookrightarrow&\Dloc(S\times\R_+)\\
x&\mapsto&\left(x_s,\sup_{u\geq s}\big(\widetilde{d}(x_u,\Delta)+u-s\big)^{-1}\right)_{0\leq s <\xi(x)}.
\end{array}\]
It can be proved that it is a Polish topology and  a sequence $(x^k)_k$ converges to $x$ for this topology if and only if there exists a sequence $(\lambda^k)_k$ in $\Lambda$ such that for all $t<\xi(x)$, for $k$ large enough $\lambda^k_t<\xi(x^k)$ and
\[
\sup_{s\leq t}d(x_s,x^k_{\lambda^k_s})\cv{}0,\quad\|\log\dot{\lambda}^k\|_t\cv{}0,\quad\mbox{ as }k\to\infty,
\]
and if $\xi(x)<\infty$ and $\{x_s\}_{s<\xi(x)}\Subset S$
\[
x^k_{\lambda^k_{\xi(x)}}\cv{}\Delta,\quad\mbox{ as }k\to\infty.
\]
\end{remark}
We are now interested to characterise the sets of $\Dloc(S)$ which are compact and also to obtain a criterion for the tightness of a subset of probability measures in $\Pcal\left(\Dloc(S)\right)$. 
For $x\in \Dexp(S)$, $t\geq 0$, $K\subset S$ compact and $\delta>0$, define
\begin{align}\label{eqDefOme}
\omega^\prime_{t,K,x}(\delta):=\inf\left\{\sup_{\substack{0\leq i<N\\t_i\leq s_1,s_2<t_{i+1}}}d(x_{s_1},x_{s_2})\mid\begin{array}{l}
N\in\N,~0=t_0<\cdots <t_N\leq\xi(x)\\
(t_N,x_{t_N})\not\in [0,t]\times K\\
\forall 0\leq i<N:~t_{i+1}-t_i>\delta
\end{array}\right\}.
\end{align}
We give some properties of $\omega^\prime$.
\begin{proposition}\label{propMod}~
\begin{enumerate}
\item[i)]Consider $x\in\Dexp(S)$. Then $x$ belongs to $\Dloc(S)$ if and only if
\[
\forall t\geq 0,~\forall K\subset S\text{ compact,}\quad \omega^\prime_{t,K,x}(\delta)\cv{\delta\to 0}0.
\]
\item[ii)]For all $t\geq 0$, $K\subset S$ compact and $\delta>0$, the mapping
\[\begin{array}{ccc}
\Dloc(S)&\to&[0,+\infty]\\
x&\mapsto&\omega^\prime_{t,K,x}(\delta)
\end{array}\]
is upper semi-continuous
\end{enumerate}
\end{proposition}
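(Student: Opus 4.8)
The plan is to reduce both assertions to the classical behaviour of the Billingsley modulus $w'$ of a \cadlag path on a \emph{compact} time interval — namely $\lim_{\delta\to0}w'(y,\delta)=0$ for such $y$ — together with the convergence characterisation of Theorem~\ref{thmLocSkoTop}. For the ``only if'' direction of~i), fix $x\in\Dloc(S)$, $t\ge0$ and $K\subset S$ compact. I would first pick, independently of $\delta$, a cut-off time $t^*\le\xi(x)$ with $(t^*,x_{t^*})\notin[0,t]\times K$ such that $x$ restricted to $[0,t^*]$, completed at $t^*$ by $x_{\xi(x)-}$ when $t^*=\xi(x)$ (legitimate precisely because $x\in\Dloc(S)$), is a genuine \cadlag path on a compact interval. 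There are three cases: if $\xi(x)>t$, take any $t^*\in(t,\xi(x))$; if $\xi(x)\le t$ and $\{x_s\}_{s<\xi(x)}\Subset S$, take $t^*=\xi(x)$; if $\xi(x)\le t$ and $\{x_s\}_{s<\xi(x)}\not\Subset S$, then $x$ must leave $K$ before exploding, so take some $t^*<\xi(x)$ with $x_{t^*}\notin K$. In each case, any subdivision $0=t_0<\dots<t_N=t^*$ with $t_{i+1}-t_i>\delta$ almost realising $w'(x|_{[0,t^*]},\delta)$ is admissible in~\eqref{eqDefOme}, hence $\omega'_{t,K,x}(\delta)\le w'(x|_{[0,t^*]},\delta)\to0$ as $\delta\to0$.

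For the ``if'' direction of~i), only the case $\xi(x)\in(0,\infty)$, $\{x_s\}_{s<\xi(x)}\Subset S$ requires work, and there one must produce $x_{\xi(x)-}$. I would apply the hypothesis with $K:=\overline{\{x_s:s<\xi(x)\}}$ and any $t>\xi(x)$: then in~\eqref{eqDefOme} every admissible subdivision satisfies $t_N=\xi(x)$, since $x_s\in K$ for $s<\xi(x)$ while $x_{\xi(x)}=\Delta\notin K$. So for $\delta$ small there is a subdivision $0=t_0<\dots<t_N=\xi(x)$ with $t_{i+1}-t_i>\delta$ whose last piece has oscillation $\le2\,\omega'_{t,K,x}(\delta)$; since $u\mapsto\sup_{u\le s_1,s_2<\xi(x)}d(x_{s_1},x_{s_2})$ is non-increasing and is $\le2\,\omega'_{t,K,x}(\delta)$ at $u=t_{N-1}<\xi(x)$, it tends to $0$ as $u\uparrow\xi(x)$, and the Cauchy criterion in the complete set $K$ yields $x_{\xi(x)-}\in S$.

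For~ii), since $\Dloc(S)$ is metrizable, upper semicontinuity amounts to $\limsup_k\omega'_{t,K,x^k}(\delta)\le\omega'_{t,K,x}(\delta)$ whenever $x^k\to x$ in $\Dloc(S)$. Given $m>\omega'_{t,K,x}(\delta)$, fix an admissible subdivision $0=t_0<\dots<t_N\le\xi(x)$ for $x$ with $t_{i+1}-t_i>\delta$ and all oscillations $<m$, choose $(\lambda^k)_k\subset\Lambda$ realising $x^k\to x$, and set $t_i^k:=\lambda^k_{t_i}$. When $\xi(x)<\infty$ with $\{x_s\}_{s<\xi(x)}\Subset S$, or when $t_N<\xi(x)$, the convergence gives, for $k$ large, $\lambda^k_{t_N}\le\xi(x^k)$, $\|\log\dot\lambda^k\|_{t_N}\to0$, $\eps_k:=\sup_{s<t_N}d(x_s,x^k_{\lambda^k_s})\to0$ and $x^k_{\lambda^k_{t_N}}\to x_{t_N}$. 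Then $t_{i+1}^k-t_i^k\ge e^{-\|\log\dot\lambda^k\|_{t_N}}(t_{i+1}-t_i)>\delta$ for $k$ large; since $\lambda^k$ maps $[t_i,t_{i+1})$ onto $[t_i^k,t_{i+1}^k)$ with all relevant times in $[0,t_N)$, the triangle inequality gives $\sup_{t_i^k\le u_1,u_2<t_{i+1}^k}d(x^k_{u_1},x^k_{u_2})\le2\eps_k+m$; and $(t_N^k,x^k_{t_N^k})\notin[0,t]\times K$ for $k$ large, because if $t_N\le t$ then $x_{t_N}\notin K$ and $x^k_{\lambda^k_{t_N}}\to x_{t_N}$, so $x^k_{\lambda^k_{t_N}}\notin K$ eventually ($S^\Delta\setminus K$ being open), while if $t_N>t$ then $\lambda^k_{t_N}\to t_N>t$. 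Hence $\omega'_{t,K,x^k}(\delta)\le2\eps_k+m$, so $\limsup_k\omega'_{t,K,x^k}(\delta)\le m$, and letting $m\downarrow\omega'_{t,K,x}(\delta)$ finishes. The one remaining configuration, $t_N=\xi(x)<\infty$ with $\{x_s\}_{s<\xi(x)}\not\Subset S$, I would reduce to the above by replacing $t_N=\xi(x)$ with a $t^*<\xi(x)$ — chosen as close to $\xi(x)$ as needed — with $x_{t^*}\notin K$, which only shrinks oscillations and keeps $t_{i+1}-t_i>\delta$.

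The real obstacle is precisely this last configuration in~ii): when $x$ explodes at a finite time with non-relatively-compact range, the local Skorokhod convergence $x^k\to x$ carries no information on the $x^k$ near $\xi(x)$, not even a lower bound on $\xi(x^k)$, so the subdivision transported from $x$ must be truncated strictly before $\xi(x)$; keeping it admissible forces the auxiliary fact — itself a consequence of the relative compactness of the range of a \cadlag path on a compact time interval — that a path whose range is not relatively compact leaves every compact subset of $S$ at times arbitrarily close to its explosion time. The remaining verifications (piece lengths, oscillations, and endpoint of the transported subdivision) are routine.
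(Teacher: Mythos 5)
Your proposal is correct and follows essentially the same route as the paper: the same case analysis on $t^*$ for the forward direction of i) (with the paper proving the compact-interval modulus fact inline via a $\sup I$ argument rather than citing it), the same forced choice $t_N=\xi(x)$ with $K=\overline{\{x_s\}}_{s<\xi(x)}$ for the converse (Cauchy criterion in place of the paper's two-limit-points argument), and for ii) the identical transported-subdivision argument, including the same fix for the case $t_N=\xi(x)<\infty$ with non-relatively-compact range, namely replacing $t_N$ by some $\widetilde t_N\in(t_{N-1}+\delta,\xi(x))$ with $x_{\widetilde t_N}\notin K$.
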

\begin{proof}
Suppose that $x\in\Dloc(S)$ and let $t\geq 0$ and a compact $K\subset S$ be. There exists $T\leq \xi(x)$ such that $(T,x_T)\not\in [0,t]\times K$ and the limit $x_{T-}$ exists in $S$. Let $\eps>0$ be arbitrary and consider $I$ the set of times $s\leq T$ for which there exists a subdivision 
\[
0=t_0<\cdots<t_N = s
\]
such that
\[
\sup_{\substack{0\leq i<N\\t_i\leq s_1,s_2<t_{i+1}}}d(x_{s_1},x_{s_2})\leq \eps.
\]
It's clear that $I$ is an interval of $[0,T]$ containing $0$: set $t^*:=\sup I$. Since 
there is existence of the limit  $x_{t^*-}$, then $t^*\in I$, and, since $x$ is right-continuous, $t^*=T$. Hence $T\in I$ and there exists $\delta>0$ such that $\omega^\prime_{t,K,x}(\delta)\leq \eps$.

Conversely,  let's take $x\in\Dexp(S)$ such that $\xi(x)<\infty$, $\{x_s\}_{s<\xi(x)}\Subset S$ and
\[
\forall t\geq 0,~\forall K\subset S\text{ compact,}\quad \omega^\prime_{t,K,x}(\delta)\cv{\delta\to 0}0.
\]
We need to prove that the limit $x_{\xi(x)-}$ exists in $S$. Let $y_1,y_2$ 
be any two limits points of $x_s$, as $s\to\xi(x)$. We will prove that $y_1=y_2$.
Let $\eps>0$ be arbitrary . By taking $t=\xi(x)$ and $K=\overline{\{x_s\}}_{s<\xi(x)}$ in \eqref{eqDefOme} there exists a subdivision 
\[
0=t_0<\cdots<t_N=\xi(x)
\]
such that
\[
\sup_{\substack{0\leq i<N\\t_i\leq s_1,s_2<t_{i+1}}}d(x_{s_1},x_{s_2})\leq \eps.
\]
Replacing in the latter inequality the two sub-sequences tending toward $y_1$, $y_2$, we can deduce that $d(y_1,y_2)\leq\eps$, and letting $\eps\to 0$ we get $y_1=y_2$.

Let $(x^k)_k\subset\Dloc(S)$ be such that $x^k$ converges to $x\in\Dloc(S)$ and let $(\lambda^k)_k\subset\Lambda$ be such in Theorem~\ref{thmLocSkoTop}. We need to prove that
\[
\limsup_{k\to\infty}\omega^\prime_{t,K,x^k}(\delta)\leq \omega^\prime_{t,K,x}(\delta).
\]
We can suppose that $\omega^\prime_{t,K,x}(\delta)<\infty$. Let $\eps>0$ be arbitrary 
and consider a subdivision $0=t_0<\cdots<t_N\leq \xi(x)$  such that
\[
\sup_{\substack{0\leq i<N\\t_i\leq s_1,s_2<t_{i+1}}}d(x_{s_1},x_{s_2})\leq \omega^\prime_{t,K,x}(\delta)+\eps,
\]
$t_{i+1}>t_i+\delta$ and $(t_N,x_{t_N})\not\in [0,t]\times K$. If $t_N=\xi(x)$ and $\{x_s\}_{s<\xi(x)}\not\Subset S$, then we can find $\widetilde{t}_N$ such that  $t_{N-1}+\delta<\widetilde{t}_N<\xi(x)$ and $x_{\widetilde{t}_N}\not \in K$. 
We can suppose, possibly by replacing $t_N$ by $\widetilde{t}_N$, that 
\[
t_N=\xi(x)\mbox{ implies }\{x_s\}_{s<\xi(x)}\Subset S.
\]
Hence, for $k$ large enough, $\lambda^k_{t_N}\leq\xi(x^k)$ and
\[
\sup_{s<t_N}d(x_s,x^k_{\lambda^k_s})\cv{}0,\hspace{1cm}x^k_{\lambda^k_{t_N}}\cv{}x_{t_N},\hspace{1cm}\|\lambda^k-{\rm id}\|_{t_N}\cv{}0,\quad\mbox{ as }\,k\to\infty.
\]
We deduce that, for $k$ large enough, we have $0=\lambda^k_{t_0}<\cdots<\lambda^k_{t_N}\leq \xi(x^k)$, $\lambda^k_{t_{i+1}}>\lambda^k_{t_i}+\delta$, $(\lambda^k_{t_N},x^k_{\lambda^k_{t_N}})\not\in [0,t]\times K$, and moreover,
\begin{align*}
\sup_{\substack{0\leq i<N\\\lambda^k_{t_i}\leq s_1,s_2<\lambda^k_{t_{i+1}}}}d(x^k_{s_1},x^k_{s_2})
&\leq \sup_{\substack{0\leq i<N\\t_i\leq s_1,s_2<t_{i+1}}}d(x_{s_1},x_{s_2}) + 2\sup_{s<t_N}d(x_s,x^k_{\lambda^k_s})\\
&\leq\omega^\prime_{t,K,x}(\delta)+\eps+ 2\sup_{s<t_N}d(x_s,x^k_{\lambda^k_s})\cv{k\to\infty}\omega^\prime_{t,K,x}(\delta)+\eps.
\end{align*}
Therefore 
\[
\limsup_{k\to\infty}\omega^\prime_{t,K,x^k}(\delta)\leq \omega^\prime_{t,K,x}(\delta)+\eps,
\]
and we conclude by letting $\eps\to 0$.
\end{proof}
We can give now a characterisation of the relative compactness for the local Skorokhod topology:
\begin{theorem}[Compact sets of $\Dloc(S)$]\label{thmComp}
For any subset $D\subset\Dloc(S)$, $D$ is relatively compact if and only if
\begin{align}\label{eqThmComp}
\forall t\geq 0,~K\subset S\text{ compact,}\quad \sup_{x\in D}\omega^\prime_{t,K,x}(\delta)\cv{\delta\to 0}0.
\end{align}
\end{theorem}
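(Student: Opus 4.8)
The plan is to use that $\Dloc(S)$ is metrizable (Theorem~\ref{thmLocSkoTop}), so that relative compactness coincides with sequential relative compactness, and then to reduce the substantive implication to the classical compactness criterion for Skorokhod spaces over a \emph{compact} state space (Billingsley \cite{Bi99}, \S12) by localising with a compact exhaustion of $S$.

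\textbf{Easy implication.} Suppose $D$ is relatively compact but \eqref{eqThmComp} fails for some $t\geq0$ and some compact $K\subset S$; pick $\eps>0$, $\delta_k\downarrow0$ and $x^k\in D$ with $\omega'_{t,K,x^k}(\delta_k)>\eps$, and, by relative compactness, assume $x^k\to x$ in $\Dloc(S)$. Since $\delta\mapsto\omega'_{t,K,y}(\delta)$ is non-decreasing (a larger $\delta$ only shrinks the family of admissible subdivisions), for each fixed $\delta>0$ we get $\omega'_{t,K,x^k}(\delta)\geq\omega'_{t,K,x^k}(\delta_k)>\eps$ for $k$ large; the upper semicontinuity in Proposition~\ref{propMod}(ii) then yields $\omega'_{t,K,x}(\delta)\geq\limsup_k\omega'_{t,K,x^k}(\delta)\geq\eps$ for every $\delta>0$, contradicting Proposition~\ref{propMod}(i) since $x\in\Dloc(S)$.

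\textbf{Main implication.} Assume \eqref{eqThmComp}. I would fix open sets $U_n\Subset U_{n+1}$ with $\bigcup_nU_n=S$, set $K_n:=\overline{U_n}$, and note that $K_n^\Delta:=K_n\sqcup\{\Delta\}$ (with $\Delta$ isolated) is a compact metric space. Given $(x^k)_k\subset D$, for each $n$ consider the path $y^{k,n}$ equal to $x^k$ on $[0,\tau^{U_n}(x^k))$ and equal to $\Delta$ afterwards; by Corollary~\ref{corST} it belongs to $\D(K_n^\Delta)$, and a routine comparison shows that its classical Billingsley modulus on $[0,t]$ is controlled by $\sup_{x\in D}\omega'_{t',K_n,x}(\delta)$ for a suitable $t'\geq t$ (one splits, at the exit time, the subdivision witnessing $\omega'$). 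Hence $(y^{k,n})_k$ is relatively compact in $\D(K_n^\Delta)$ for every $n$, and a diagonal extraction produces a subsequence along which $y^{k,n}\to y^n$ in $\D(K_n^\Delta)$ for all $n$. The limits are consistent---$y^n$ is $y^{n+1}$ stopped at its exit from $U_n$, obtained by passing to the limit in the analogous relation between the $y^{k,n}$---so I can glue them into a path $x$ with $x_s:=y^n_s$ for $s$ before $y^n$ reaches $\Delta$ ($n$ large). Then $x\in\Dloc(S)$: indeed, if $\xi(x)\in(0,\infty)$ and $\{x_s\}_{s<\xi(x)}\Subset S$ the range lies in some $K_n$, so $x$ agrees with $y^n$ on $[0,\xi(x))$ and $x_{\xi(x)-}=y^n_{\xi(x)-}$ exists in $K_n\subset S$ because $y^n$ is \cadlag (one may also simply invoke Proposition~\ref{propMod}(i)). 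Finally one checks $x^k\to x$ for the local Skorokhod topology with the criterion of Theorem~\ref{thmLocSkoTop} (equivalently the Remark after Theorem~\ref{th2ndCarCvg}) by splicing, on each $[0,t]$ with $\{x_s\}_{s<t}\Subset S$, the time-changes coming from $y^{k,n}\to y^n$ for the relevant $n$.

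\textbf{Main obstacle.} The hard part is this gluing: establishing the consistency of the family $(y^n)_n$ despite the discontinuity of exit-time functionals, and assembling the time-changes supplied on the various $\D(K_n^\Delta)$ into a single sequence $(\lambda^k)_k\subset\Lambda$ simultaneously valid on all relevant intervals, while also handling the explosive regime $\{x_s\}_{s<\xi(x)}\not\Subset S$ where, for each $t<\xi(x)$, one must exhibit an exit from $[0,t]\times K$ to feed the convergence criterion. Such manipulations are considerably cleaner once explicit local Skorokhod metrics are available, so in practice I would carry the argument out in Section~\ref{secSkoMet}; with a \emph{complete} metric $d_{\Dloc}$ at hand it then suffices to show that \eqref{eqThmComp} forces $D$ to be totally bounded for $d_{\Dloc}$, which is the route I would ultimately take.
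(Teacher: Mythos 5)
Your ``easy implication'' is correct and is essentially the paper's argument: monotonicity of $\delta\mapsto\omega^\prime_{t,K,x}(\delta)$ plus the upper semicontinuity and the characterisation of $\Dloc(S)$ from Proposition~\ref{propMod}. The problem is the main implication, which is where the content of the theorem lies, and there your proposal does not close. The localisation route you sketch has, beyond the gluing difficulties you yourself flag, an unacknowledged defect at its very first step: the classical Billingsley modulus of the stopped path $y^{k,n}$ is \emph{not} controlled by $\sup_{x\in D}\omega^\prime_{t^\prime,K_n,x}(\delta)$, because the exit time $\tau^{U_n}(x^k)$ (where $y^{k,n}$ jumps to $\Delta$) can occur strictly inside an interval $[t_i,t_{i+1})$ of the subdivision witnessing $\omega^\prime$ --- the path may touch $\partial U_n\subset K_n$ long before it leaves $K_n$ --- and ``splitting the subdivision at the exit time'' destroys the $\delta$-sparseness requirement $t_{i+1}-t_i>\delta$ that the classical criterion demands. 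So relative compactness of $(y^{k,n})_k$ in $\D(K_n^\Delta)$ does not follow as claimed, and the subsequent consistency/gluing/time-change-assembly issues you list remain open on top of that.

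Your fallback --- pass to a complete metric and show that \eqref{eqThmComp} forces total boundedness --- is indeed the paper's actual route, but you state it as an intention rather than carrying it out, and all of its substance is missing. Concretely, the paper (i) observes that condition \eqref{eqThmComp} is metric-independent, so one may assume $(S,d)$ complete; (ii) proves a Cauchy-completeness statement for the pseudo-metrics $\rho_{t,K}$ (Lemma~\ref{lmComp}, itself a multi-step construction); (iii) exhibits explicit finite $\eps$-nets, namely the step functions $\Ec_{R,\delta,N}$ with values in a finite net $R$ of $K$ and jumps on the grid $\delta\Z$, showing via Lemma~\ref{lmAproxEsp} that $\widetilde{\rho}_{N\delta,K}(x,\Ec_{R,\delta,N})\leq(\sup_{a\in K}d(a,R)+\omega^\prime_{N\delta,K,x}(\delta))\vee\delta$; and (iv) upgrades this $\widetilde{\rho}$-net to a $\rho$-net using the comparison $\rho_{t,K}\leq 6\sqrt{\widetilde{\rho}_{t,K}}\vee\omega^\prime_{t,K,x}(\sqrt{\widetilde{\rho}_{t,K}})$ of Lemma~\ref{lmEqDist}, which is exactly where the hypothesis $\sup_{x\in D}\omega^\prime_{t,K,x}(\delta)\to 0$ is used a second time. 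None of (ii)--(iv) appears in your proposal, so the main implication is a genuine gap rather than a sketch of a complete argument.
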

The proof follows the strategy developed in \S 12 pp. 121-137 from \cite{Bi99} and it is postponed to Section  \ref{secSkoMet}.

We conclude this section with a version of the Aldous criterion of tightness:
\begin{proposition}[Aldous criterion]\label{propAldous}
Let $\Pcal$ be a subset of $\Pcal\left(\Dloc(S)\right)$. If for all $t\geq 0$, $\eps>0$, and an open subset $U\Subset S$, 
we have:
\[
\inf_{F\subset\Pcal}\sup_{\Pbf\in\Pcal\backslash F}\sup_{\substack{\tau_1\leq\tau_2\\ \tau_2\leq (\tau_1+\delta)\wedge t\wedge\tau^U}}\Pbf\big(\tau_1<\tau_2=\xi\text{ or } d(X_{\tau_1},X_{\tau_2})\1_{\{\tau_1<\xi\}}\geq\eps\big)\cv{\delta\to 0}0,
\]
 then $\Pcal$ is tight. Here the infimum is taken on all finite subsets $F\subset \Pcal$ and the supremum is taken on all stopping times $\tau_i$.
\end{proposition}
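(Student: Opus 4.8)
The plan is to deduce tightness from the compactness criterion of Theorem~\ref{thmComp} through Prokhorov's theorem, the link between the two being an Aldous-type inequality converting control of the increments of $X$ at stopping times into control of the modulus $\omega'$. First I would reduce to a sequence: a family in $\Pcal(\Dloc(S))$ is tight as soon as each of its countable subfamilies is tight, and the hypothesis is inherited by any countable subfamily $\Pcal_0$ (replace the finite set $F$ by $F\cap\Pcal_0$), so it suffices to fix a sequence $(\Pbf_i)_{i}$ in $\Pcal$ and prove it tight. For such a sequence the hypothesis reads: for all $t\geq0$, $\eps>0$ and open $U\Subset S$,
\[
\lim_{\delta\to0}\ \limsup_{i\to\infty}\ \sup_{\substack{\tau_1\leq\tau_2\\ \tau_2\leq(\tau_1+\delta)\wedge t\wedge\tau^U}}\Pbf_i\big(\tau_1<\tau_2=\xi\ \text{ or }\ d(X_{\tau_1},X_{\tau_2})\1_{\{\tau_1<\xi\}}\geq\eps\big)=0.
\]

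Next I would fix a compact exhaustion $K_1\subset K_2\subset\cdots$ of $S$ with $K_m\subset\mathrm{int}(K_{m+1})$ (available since $S$ is locally compact, Hausdorff and second countable). Since $\omega'_{t,K,x}(\delta)$ is non-decreasing in $t$, in $\delta$ and under enlargement of $K$, Theorem~\ref{thmComp} and Prokhorov's theorem reduce the tightness of $(\Pbf_i)_i$ to the statement: for all $n,m\in\N$ and $\eps>0$, $\sup_i\Pbf_i(\omega'_{n,K_m,X}(\delta)>\eps)\to0$ as $\delta\to0$. Indeed, granting this I would choose $\delta_{n,m,j}>0$ with $\sup_i\Pbf_i(\omega'_{n,K_m,X}(\delta_{n,m,j})>1/j)\leq\eta\,2^{-n-m-j}$, set $D_\eta:=\bigcap_{n,m,j}\{x:\omega'_{n,K_m,x}(\delta_{n,m,j})\leq1/j\}$, and note that by monotonicity of $\omega'$ in $\delta$ the set $D_\eta$ satisfies \eqref{eqThmComp}, hence is relatively compact, while $\Pbf_i(D_\eta)\geq1-\eta$ for every $i$; as $\eta$ is arbitrary this gives tightness (the relevant measurability is guaranteed by Proposition~\ref{propMod}(ii)).

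The core of the argument is an Aldous inequality. Fix $t:=n$, $K:=K_m$, pick an open $U$ with $K\Subset U\Subset S$, and set $\theta:=(t+1)\wedge\tau^U$, so $\theta\leq\xi$ by \eqref{eqtauU}. I would prove that there are absolute constants $c\in(0,1)$, $C\geq1$ such that for every $\Pbf\in\Pcal(\Dloc(S))$ and every $\delta\in(0,1)$,
\[
\Pbf\big(\omega'_{t,K,X}(\delta)>\eps\big)\ \leq\ C\sup_{\substack{\tau_1\leq\tau_2\\ \tau_2\leq(\tau_1+\delta)\wedge(t+1)\wedge\tau^U}}\Pbf\big(\tau_1<\tau_2=\xi\ \text{ or }\ d(X_{\tau_1},X_{\tau_2})\1_{\{\tau_1<\xi\}}\geq c\,\eps\big).
\]
Combined with the displayed hypothesis (applied with $t+1$, $c\eps$ and $U$) this yields $\limsup_i\Pbf_i(\omega'_{t,K,X}(\delta)>\eps)\to0$ as $\delta\to0$; the finitely many remaining indices are handled by Proposition~\ref{propMod}(i), which gives $\omega'_{t,K,x}(\delta)\to0$ for every $x\in\Dloc(S)$, whence $\Pbf_i(\omega'_{t,K,X}(\delta)>\eps)\to0$ by dominated convergence. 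To obtain the inequality itself I would run the classical construction: set $\sigma_0:=0$ and $\sigma_{k+1}:=\inf\{s>\sigma_k:d(X_s,X_{\sigma_k})\geq c\eps\}\wedge\theta$, which are $(\Fc_s)$-stopping times by Corollary~\ref{corST} (with $\Uc:=\{(a,b)\in S^2:d(a,b)<c\eps\}$). On each path the $\sigma_k$ increase strictly up to the first one equal to $\theta$, which is reached in finitely many steps since $X\in\Dloc(S)$ stays in the compact set $\overline U$ on $[0,\tau^U)$, and in at most $(t+1)/\delta+1$ steps on the event where every gap $\sigma_{k+1}-\sigma_k$ with $\sigma_{k+1}<\theta$ exceeds $\delta$; moreover each piece $[\sigma_k,\sigma_{k+1})$ carries oscillation at most $2c\eps$. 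On that event one reads off a subdivision witnessing $\omega'_{t,K,X}(\delta)\leq2c\eps$, its terminal point being either a point where $X$ has already left $U\supset K$ (using $K\Subset U$; when the exit is seen only through a left limit one moves slightly to the left) in case $\tau^U\leq t$, or else — then $\tau^U>t$ and $\theta>t$ — the first $\sigma_k$ exceeding $t$, which satisfies $\sigma_k\leq\theta\leq\xi$ and has time $>t$. Hence $\{\omega'_{t,K,X}(\delta)>2c\eps\}$ is contained in the union of the event $B$ that some gap $\sigma_{k+1}-\sigma_k$ with $\sigma_{k+1}<\theta$ is $\leq\delta$ and a bounded number of correction events (a too-short final piece, a large left-limit displacement near $\theta$), all of which reduce, after possibly shrinking $c$, to admissible increment events; on $B$, taking $I$ the smallest such index, the pair $(\tau_1,\tau_2):=(\sigma_I,\sigma_{I+1})$ is admissible and satisfies $d(X_{\tau_1},X_{\tau_2})\geq c\eps$ (standard right-limit argument), with the alternative $\tau_1<\tau_2=\xi$ occurring precisely when $\theta=\tau^U=\xi$. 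Taking the supremum over admissible pairs bounds $\Pbf(B)$, and the inequality follows after absorbing the corrections into $C$ and relabelling.

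The step I expect to be the main obstacle is precisely this Aldous inequality: extracting from the purely combinatorial definition of $\omega'$ a probability bound \emph{with a constant independent of $\delta$} — a crude union bound over the $\sim t/\delta$ gaps would diverge — which is what forces the device of selecting the \emph{first} offending gap, together with a delicate bookkeeping of the terminal partition point and of the explosion time near $t$, including large displacements through left limits. It is to make this bookkeeping work that the criterion is stated with the clause $\{\tau_1<\tau_2=\xi\}$ and that one argues on $[0,(t+1)\wedge\tau^U]$ rather than on $[0,t]$ with $K$ alone; performing these estimates rigorously, conveniently by means of a local Skorokhod metric as in Section~\ref{secSkoMet}, is where most of the work lies.
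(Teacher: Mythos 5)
Your overall architecture (reduce to a sequence, then to the control of $\Pbf(\omega'_{t,K,X}(\delta)>\eps)$, then invoke Theorem~\ref{thmComp}) matches the paper's implication \ref{itemPropTen2}$\Rightarrow$\ref{itemPropTen1} in Theorem~\ref{thmTension}, and the preliminary reductions are fine. The gap is exactly at the step you yourself single out as the main obstacle, and your proposed fix does not work. You bound the event $B=\{\exists k:\ \sigma_{k+1}-\sigma_k\leq\delta,\ \sigma_{k+1}<\theta\}$ by taking $I$ the \emph{first} offending index and declaring the pair $(\tau_1,\tau_2):=(\sigma_I,\sigma_{I+1})$ admissible in the supremum of the hypothesis. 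But $\sigma_I$ is not a stopping time: deciding $\{I=k\}$ requires knowing that $\sigma_{j+1}>\sigma_j+\delta$ for all $j<k$ and that $\sigma_{k+1}\leq\sigma_k+\delta$, and the latter event is in $\Fc_{\sigma_k+\delta}$, not in $\Fc_{\sigma_k}$; hence $\{\sigma_I\leq s\}$ requires information about the path on $(s,s+\delta]$ and is not in $\Fc_s$. Consequently the pair $(\sigma_I,\sigma_{I+1})$ is not covered by the supremum over stopping times in the hypothesis, and your claimed inequality $\Pbf(\omega'_{t,K,X}(\delta)>\eps)\leq C\,\sup_{\tau_1\le\tau_2\le(\tau_1+\delta)\wedge(t+1)\wedge\tau^U}\Pbf(\cdots)$ with a constant $C$ independent of $\delta$ is left without proof. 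This measurability obstruction is precisely why the Aldous criterion is not a one-line union bound; one cannot ``select the first bad gap'' adaptedly.

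The paper circumvents this with a two-scale argument (proof of \ref{itemPropTen3}$\Rightarrow$\ref{itemPropTen2} in Theorem~\ref{thmTension}, following Billingsley): first fix an auxiliary scale $\delta'$ and $m>2t/\delta'$, and use the conditional expectation estimate $t\geq\Ebf[\tau_m\mid A]\geq\frac{m\delta'}{2}\bigl(1-\frac{2\alpha(\cdot,\delta')}{\Pbf(A)}\bigr)$ to show, via \eqref{eqThmTension6}, that except on an event of probability $\frac{2\alpha(\delta')}{1-2t/(m\delta')}$ the number of $\eps_1$-oscillation times before $t$ is at most $m$; then perform a union bound over these $m$ \emph{individually admissible} pairs $(\tau_n,\tau_{n+1}^\eps)$, each contributing a term $\alpha(\delta)$, which gives a bound of the form $\frac{2\alpha(\delta')}{1-2t/(m\delta')}+C\,m\,\alpha(\delta)$; finally one lets $\delta\to0$ for fixed $(m,\delta')$ and only afterwards optimises over $\delta'$. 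The factor $m$ in front of $\alpha(\delta)$ depends on $\delta'$ but not on $\delta$, which is what makes the union bound converge --- there is no absorption of the count into an absolute constant $C$. (There is also a genuine left-limit subtlety, handled in the paper by the approximation $\tau_n^\eps\uparrow\tau_{n+1}$ and the terms $d(X_{\tau_n-},X_{\tau_n})$ in \eqref{eqThmTension5}, which your ``standard right-limit argument'' glosses over; but that is secondary.) To repair your proof you would need to import this expectation argument, at which point you are essentially reproducing the paper's proof of Theorem~\ref{thmTension}, of which Proposition~\ref{propAldous} is a corollary.
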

In fact we will state and prove an improvement of the Aldous criterion which becomes an equivalence:
\begin{theorem}[Tightness for $\Dloc(S)$]\label{thmTension}
For any subset $\Pcal\subset\Pcal\left(\Dloc(S)\right)$, the following assertions are equivalent:
\begin{enumerate}
\item\label{itemPropTen1} $\Pcal$ is tight,
\item\label{itemPropTen2} for all $t\geq 0$, $\eps >0$ and $K$ a compact set we have
\[
\sup_{\Pbf\in\Pcal}\Pbf\left(\omega^\prime_{t,K,X}(\delta)\geq\eps\right)\cv{\delta\to 0}0,
\]
\item\label{itemPropTen3} for all $t\geq 0$, $\eps>0$, and open subset $U\Subset S$ we have:
\[
\alpha(\eps,t,U,\delta):=\sup_{\Pbf\in\Pcal}\sup_{\substack{\tau_1\leq\tau_2\leq\tau_3\\ \tau_3\leq (\tau_1+\delta)\wedge t\wedge\tau^U}}\Pbf(R\geq\eps)\cv{\delta\to 0}0,
\]
where the supremum is taken on  $\tau_i$ stopping times and with
\[
R:=\left\{\begin{array}{ll}
d(X_{\tau_1},X_{\tau_2})\wedge d(X_{\tau_2},X_{\tau_3})&\text{if }0<\tau_1<\tau_2<\tau_3<\xi,\\
d(X_{\tau_2-},X_{\tau_2})\wedge d(X_{\tau_2},X_{\tau_3})&\text{if }0<\tau_1=\tau_2<\tau_3<\xi,\\
d(X_{\tau_1},X_{\tau_2})&\text{if }0=\tau_1\leq\tau_2<\xi\text{ or } 0<\tau_1<\tau_2<\tau_3=\xi,\\
d(X_{\tau_2-},X_{\tau_2})&\text{if }0<\tau_1=\tau_2<\tau_3=\xi,\\
0&\text{if }\tau_1=\xi\text{ or }0<\tau_1\leq\tau_2=\tau_3,\\
\infty&\text{if }0=\tau_1<\tau_2=\xi.
\end{array}\right.
\]
\end{enumerate}
\end{theorem}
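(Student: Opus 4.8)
The plan is to prove the two equivalences $\ref{itemPropTen1}\Leftrightarrow\ref{itemPropTen2}$ and $\ref{itemPropTen2}\Leftrightarrow\ref{itemPropTen3}$: the first is a Prokhorov-type argument resting on Theorem~\ref{thmComp}, the second an Aldous-type comparison between the modulus $\omega^\prime$ of \eqref{eqDefOme} and the functional $R$. For $\ref{itemPropTen1}\Rightarrow\ref{itemPropTen2}$, if $\Pcal$ is tight I take, for $\eta>0$, a compact $D\subset\Dloc(S)$ with $\inf_{\Pbf\in\Pcal}\Pbf(D)\geq1-\eta$; by Theorem~\ref{thmComp}, $\sup_{x\in D}\omega^\prime_{t,K,x}(\delta)\to0$ as $\delta\to0$, so for $\delta$ small $\{\omega^\prime_{t,K,X}(\delta)\geq\eps\}\subset\{X\notin D\}$ and hence $\sup_{\Pbf\in\Pcal}\Pbf(\omega^\prime_{t,K,X}(\delta)\geq\eps)\leq\eta$; letting $\eta\to0$ gives \ref{itemPropTen2}. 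For $\ref{itemPropTen2}\Rightarrow\ref{itemPropTen1}$, I fix an exhaustion $(K_n)_n$ of $S$ by compact sets and, for $\eta>0$, use \ref{itemPropTen2} to choose $\delta_{n,m}>0$ with $\sup_{\Pbf\in\Pcal}\Pbf(\omega^\prime_{n,K_n,X}(\delta_{n,m})\geq1/m)\leq\eta\,2^{-n-m}$; then $D:=\bigcap_{n,m\geq1}\{x\in\Dloc(S):\omega^\prime_{n,K_n,x}(\delta_{n,m})\leq1/m\}$ is Borel (each $\omega^\prime_{n,K_n,\cdot}(\delta_{n,m})$ is upper semicontinuous by Proposition~\ref{propMod}), satisfies $\sup_{\Pbf\in\Pcal}\Pbf(\Dloc(S)\setminus D)\leq\eta$, and — using that $\omega^\prime_{t,K,x}(\delta)$ is nondecreasing in $t$, in $K$ and in $\delta$ — has $\sup_{x\in D}\omega^\prime_{t,K,x}(\delta)\to0$ for all $t\geq0$ and compact $K$, hence is relatively compact by Theorem~\ref{thmComp}; so $\overline D$ is compact with $\inf_{\Pbf\in\Pcal}\Pbf(\overline D)\geq1-\eta$, and $\Pcal$ is tight.

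For $\ref{itemPropTen2}\Rightarrow\ref{itemPropTen3}$ I fix $t,\eps,U$, set $K:=\overline U$ (compact, since $U\Subset S$), and take stopping times $\tau_1\leq\tau_2\leq\tau_3\leq(\tau_1+\delta)\wedge t\wedge\tau^U$; it suffices to establish the pathwise inclusion $\{R\geq\eps\}\subset\{\omega^\prime_{t,K,X}(\delta)\geq\eps\}$, for then $\alpha(\eps,t,U,\delta)\leq\sup_{\Pbf\in\Pcal}\Pbf(\omega^\prime_{t,K,X}(\delta)\geq\eps)\to0$ by \ref{itemPropTen2}. On the event $\{\omega^\prime_{t,K,X}(\delta)<\eps\}$ I pick an admissible subdivision $0=t_0<\cdots<t_N\leq\xi$ in \eqref{eqDefOme} with $t_{i+1}-t_i>\delta$, $(t_N,X_{t_N})\notin[0,t]\times K$ and oscillation $<\eps$ on every $[t_i,t_{i+1})$. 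Since $X$ stays in $K=\overline U$ up to $\tau^U\geq\tau_3$ one gets $\tau_3\leq t_N$, and since $\tau_1,\tau_2,\tau_3$ lie in an interval of length $\leq\delta<t_{i+1}-t_i$ they meet at most two consecutive subintervals; a pigeonhole then forces two consecutive of the $\tau_i$, or a left limit and its point, or a $\tau_i$ and the final node $t_N$ (according to the clause of $R$ in force), into one subinterval, whence the relevant $d$-distance is $<\eps$ and $R<\eps$. The cases $\tau_1=\xi$ and $\tau_1\leq\tau_2=\tau_3$ give $R=0$, and in the case $0=\tau_1<\tau_2=\xi$ one has $\xi=\tau_2\leq\delta$ with $\{X_s\}_{s<\xi}\subset U\subset K$, so no subdivision in \eqref{eqDefOme} is admissible and $\omega^\prime_{t,K,X}(\delta)=\infty$; the strict inequality $t_{i+1}-t_i>\delta$ is exactly what makes the clauses of $R$ near $\xi$ work, and Corollary~\ref{corST} ensures all the suprema run over genuine stopping times.

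The remaining implication $\ref{itemPropTen3}\Rightarrow\ref{itemPropTen2}$ is the genuine Aldous step and where the real work lies. Fixing $t,\eps,K$ and an open $U\Subset S$ whose closure contains a neighbourhood of $K$, I would introduce (via Corollary~\ref{corST}) the stopping times $\sigma_0:=0$ and $\sigma_{j+1}:=\inf\{s>\sigma_j:\,d(X_s,X_{\sigma_j})\geq\eps/3\text{ or }s\geq t\text{ or }X_s\notin K\}$, which form a greedy subdivision with oscillation $<\eps$ on each $[\sigma_j,\sigma_{j+1})$, so that on $\{\omega^\prime_{t,K,X}(\delta)\geq\eps\}$ some gap $\sigma_{j+1}-\sigma_j\leq\delta$ must occur while $\sigma_j$ is still inside $[0,t]\times K$. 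The crux of Aldous's argument is that such a small gap cannot be isolated: otherwise one could merge the single oscillation it carries into an adjacent interval and obtain an admissible subdivision, contradicting $\omega^\prime_{t,K,X}(\delta)\geq\eps$. Hence one can always exhibit a triple $\tau_1\leq\tau_2\leq\tau_3$ taken among the $\sigma_j$, truncated at $(\tau_1+c\delta)\wedge t\wedge\tau^U$ for a fixed constant $c$, realising $R\geq\eps/3$, the behaviour near time $0$ and near $\xi$ being absorbed by the clauses $\tau_1=0$, $\tau_3=\xi$ and $\tau_1=\tau_2$ in the definition of $R$. Because $X$ stays in the compact $\overline U$ until $\tau^U$, only boundedly many oscillation levels can occur, so summing over those levels and over a finite time-grid of $[0,t]$ yields $\sup_{\Pbf\in\Pcal}\Pbf(\omega^\prime_{t,K,X}(\delta)\geq\eps)\leq C\,\alpha(\eps/3,t,U,c\delta)\to0$ by \ref{itemPropTen3}. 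The main obstacle is precisely this non-isolation of small gaps — the passage from ``$\omega^\prime$ is large'' to ``two $\eps$-oscillations occur within time $\delta$, or an endpoint oscillation occurs'' — which is the substance of the Aldous criterion and which is exactly what forces the six-case shape of $R$.
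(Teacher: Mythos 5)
Your treatment of \ref{itemPropTen1}$\Leftrightarrow$\ref{itemPropTen2} and of \ref{itemPropTen2}$\Rightarrow$\ref{itemPropTen3} is correct and matches the paper in substance: \ref{itemPropTen2}$\Rightarrow$\ref{itemPropTen1} is the same construction of a relatively compact set from the modulus bounds via Theorem~\ref{thmComp}, and the pathwise inclusion $\{\omega^\prime_{t,\overline{U},X}(\delta)<\eps\}\subset\{R<\eps\}$ is exactly what the paper uses (stated there as \ref{itemPropTen1}$\Rightarrow$\ref{itemPropTen3}, which amounts to the same thing once \ref{itemPropTen1}$\Leftrightarrow$\ref{itemPropTen2} is in hand). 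Your verification of the case $0=\tau_1<\tau_2=\xi$ is also sound.

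The implication \ref{itemPropTen3}$\Rightarrow$\ref{itemPropTen2}, however, is only sketched, and the sketch rests on a claim that is false as stated and that hides the one genuinely nontrivial idea of the proof. You assert that because $X$ stays in the compact $\overline{U}$ until $\tau^U$, ``only boundedly many oscillation levels can occur,'' so that summing over them gives $\Pbf(\omega^\prime_{t,K,X}(\delta)\geq\eps)\leq C\,\alpha(\eps/3,t,U,c\delta)$ with a fixed constant $C$. Compactness of the state space does not bound the number of $\eps/3$-oscillations of a \cadlag path on $[0,t]$: this number is finite for each path but not uniformly over paths, so the union bound over the indices $j$ of your stopping times $\sigma_j$ carries no uniform constant, and the displayed estimate does not follow from \ref{itemPropTen3}. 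The paper devotes a separate probabilistic argument to precisely this point: it bounds the probability of the event $A=\{\tau_m\leq t\text{ and }\forall n\leq m,\ X_{\tau_n}\in K\}$ by $2\alpha(\eps_2/2,t+2,U,\delta^\prime)/(1-2t/(m\delta^\prime))$, using the lower bound $\Ebf[\tau_{i+2}-\tau_i\mid A]\geq\delta^\prime\bigl(1-2\alpha/\Pbf(A)\bigr)$ obtained from \ref{itemPropTen3} itself and the telescoping identity $t\geq\Ebf[\tau_m\mid A]$; the parameters $m$ and $\delta^\prime$ are then chosen jointly as $\delta\to 0$, so the final bound has the form $2\alpha(\cdot,\delta^\prime)/(1-2t/(m\delta^\prime))+O(m)\cdot\alpha(\cdot,\delta)$ rather than $C\alpha(\cdot,c\delta)$. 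Without a substitute for this control on the number of oscillations your argument has a genuine hole. A second, smaller gap: the oscillation that triggers $\sigma_{j+1}$ may be realised only by the left limit $X_{\sigma_{j+1}-}$, so a triple drawn from the $\sigma_j$ need not satisfy $R\geq\eps/3$ directly; the paper circumvents this by defining the stopping times with $d(X_{\tau_n},X_s)\vee d(X_{\tau_n},X_{s-})$, approximating $\tau_{n+1}$ from below by $\tau_n^\eps$ with $\eps\uparrow\eps_1$, and passing to the limit --- this is where the $d(X_{\tau_2-},X_{\tau_2})$ clause of $R$ is actually exploited. You name both difficulties but resolve neither.
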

\begin{remark}
1) If $d$ is obtained from a metric on $S^\Delta$, then if $\eps<d(\Delta,U)$ the expression of $\alpha(\eps,t,U,\delta)$ may be simplified as follows: 
\[
\alpha(\eps,t,U,\delta)=\sup_{\Pbf\in\Pcal}\sup_{\substack{\tau_1\leq\tau_2\leq\tau_3\\ \tau_3\leq (\tau_1+\delta)\wedge t\wedge\tau^U}}\Pbf(\widetilde{R}\geq\eps)\cv{\delta\to 0}0,
\]
where the supremum is taken on $\tau_i$ stopping times and with
\[
\widetilde{R}:=\left\{\begin{array}{ll}
d(X_{\tau_1},X_{\tau_2})\wedge d(X_{\tau_2},X_{\tau_3})&\text{if }0<\tau_1<\tau_2,\\
d(X_{\tau_2-},X_{\tau_2})\wedge d(X_{\tau_2},X_{\tau_3})&\text{if }0<\tau_1=\tau_2,\\
d(X_{\tau_1},X_{\tau_2})&\text{if }0=\tau_1.
\end{array}\right.
\]
2) It is straightforward to verify that a subset $D\subset \D(S)$ is relatively compact for $\D(S)$ if and only if $D$ is relatively compact for $\Dloc(S)$ and
\[
\forall t\geq 0,\quad\left\{x_s\mid x\in D,~s\leq t\right\}\Subset S.
\]
Hence we may recover the classical characterisation of compact sets of $\D(S)$ and the classical Aldous criterion. Moreover we may obtain a version of Theorem \ref{thmTension} for $\D(S)$.\\
3) The difficult part of Theorem \ref{thmTension} is the implication \ref{itemPropTen3}$\Rightarrow$\ref{itemPropTen2}, and its 
proof is adapted from the proof of Theorem 16.10 pp. 178-179 from \cite{Bi99}.
Roughly speaking the assertion \ref{itemPropTen3} uses
\[
\omega^{\prime\prime}_x(\delta):=\sup_{s_1\leq s_2\leq s_3\leq s_1+\delta}d(x_{s_1},x_{s_2})\wedge d(x_{s_2},x_{s_3}),
\]
while the Aldous criterion uses
\[
\omega_x(\delta):=\sup_{s_1\leq s_2\leq s_1+\delta}d(x_{s_1},x_{s_2}).
\]
The term $d(X_{\tau_2-},X_{\tau_2})$ appears because, in contrary to the deterministic case, some stopping time may not be approximate by the left. We refer to the proof of Theorem 12.4 pp. 132-133 from \cite{Bi99} for the relation between $\omega^{\prime\prime}$ and $\omega^\prime$.
\end{remark}
\begin{proof}[Proof of Theorem \ref{thmTension}]~

\noindent
\ref{itemPropTen2}$\Rightarrow$\ref{itemPropTen1}
Let $\eta>0$ be and consider $(t_n)_{n\geq 1}$ a sequence of times tending to infinity and $(K_n)_{n\geq 1}$ an increasing sequence of compact subsets of $S$ such that $S=\bigcup_nK_n$. For $n\geq 1$ define $\delta_n$ such that
\[
\sup_{\Pbf\in\Pcal}\Pbf\left(\omega^\prime_{t_n,K_n,X}(\delta_n)\geq n^{-1}\right)\leq 2^{-n}\eta\,.
\]
Set 
\[
D:=\{\forall n\in\N^*,~\omega^\prime_{t_n,K_n,X}(\delta_n)< n^{-1}\}.
\]
By Theorem \ref{thmComp},  $D$ is relatively compact and moreover
\[
\sup_{\Pbf\in\Pcal}\Pbf\left(D^c\right)\leq \sum_{n\geq 1}2^{-n}\eta =\eta\,,
\]
so $\Pcal$ is tight.\\
\ref{itemPropTen1}$\Rightarrow$\ref{itemPropTen3}
Let $\eps,\eta>0$ be arbitrary. There exists a compact set $D\subset\Dloc(S)$ such that
\[
\sup_{\Pbf\in\Pcal}\Pbf(D^c)\leq\eta.
\]
By Theorem \ref{thmComp}, there exists $\delta>0$ such that
\[
D\subset\{\omega^\prime_{t,\overline{U},X}(\delta)<\eps\}.
\]
Since for all $\tau_1\leq\tau_2\leq\tau_3\leq (\tau_1+\delta)\wedge t\wedge\tau^U$
we have 
\[
\{\omega^\prime_{t,\overline{U},X}(\delta)<\eps\}\subset\{R<\eps\},
\]
we conclude that
\[
\sup_{\Pbf\in\Pcal}\sup_{\substack{\tau_1\leq\tau_2\leq\tau_3\\ \tau_3\leq (\tau_1+\delta)\wedge t\wedge\tau^U}}\Pbf(R\geq\eps)\leq\eta\, .
\]
\ref{itemPropTen3}$\Rightarrow$\ref{itemPropTen2}
For all $\eps>0$, $t\geq 0$ and open subset $U\Subset S$, 
up to consider $\widetilde{\tau}_i:=\tau_i\wedge(\tau_1+\delta)\wedge t\wedge\tau^U$ we we have a new expression of $\alpha(\eps,t,U,\delta)$:
\begin{align}\label{eqThmTension2}
\alpha(\eps,t,U,\delta)=\sup_{\Pbf\in\Pcal}\sup_{\tau_1\leq\tau_2\leq\tau_3\leq\xi}\Pbf(R\geq\eps,~\tau_3\leq(\tau_1+\delta)\wedge t\wedge\tau^U)\cv{\delta\to 0}0.
\end{align}
Consider $\eps_0>0$, $t\geq 0$ and $K$ a compact subset of $S$. We need to prove that 
\[
\inf_{\Pbf\in\Pcal}\Pbf(\omega^\prime_{t,K,X}(\delta)<\eps_0)\cv{\delta\to 0}1.
\]
Choose  $0<\eps_1<\eps_0/4$  such that
\[
U:=\left\{y\in S\mid d(y,K)<\eps_1\right\}\Subset S.
\]
For $n\in\N$ and $\eps>0$, define inductively the stopping times 
(see Corollary \ref{corST})
\begin{align*}
&\tau_0:=0,\\
&\tau_n^\eps:=\inf\big\{s>\tau_n \big |\,d(X_{\tau_n},X_s)\vee d(X_{\tau_n},X_{s-})\geq\eps\big\}\wedge (t+2)\wedge\tau^U,\\
&\tau_{n+1}:=\tau_n^{\eps_1},
\end{align*}
It is clear that $\tau_n^\eps$ increases to $\tau_{n+1}$ when $\eps$ increases to $\eps_1$. If we choose $0<\eps_2<\eps_1$, then for all $\Pbf\in\Pcal$,
\begin{align}
\label{eqThmTension1}&\limsup_{\substack{\eps\to\eps_1\\ \eps<\eps_1}}\Pbf(X_{\tau_n}\in K,~\tau_n^\eps<\xi,~d(X_{\tau_n},X_{\tau_n^\eps})\leq\eps_2,~\tau_n^\eps\leq t+1)\\
\nonumber&\hspace{1cm}\leq \Pbf\left(\limsup_{\substack{\eps\to\eps_1\\ \eps<\eps_1}}\{X_{\tau_n}\in K,~\tau_n^\eps<\xi,~d(X_{\tau_n},X_{\tau_n^\eps})\leq\eps_2,~\tau_n^\eps\leq t+1\}\right)=\Pbf(\emptyset)=0.
\end{align}
For all $\Pbf\in\Pcal$, $\delta\leq 1$ and $0<\eps<\eps_1$ we have using the expression \eqref{eqThmTension2} with stopping times $0\leq\tau_0^\eps\leq\tau_0^\eps$
\begin{align*}
\Pbf(X_0\in K,~\tau_0^\eps\leq\delta)
& = \Pbf(X_0\in K,~X_{\tau_0^\eps}\not\in B(X_0,\eps_2),~\tau_0^\eps\leq\delta)\\
&\quad+\Pbf(X_0\in K,~\tau_0^\eps<\xi,~d(X_0,X_{\tau_0^\eps})< \eps_2,~\tau_0^\eps\leq\delta)\\
&\leq \alpha(\eps_2,t+2,U,\delta)\\
&\quad+\Pbf(X_0\in K,~\tau_0^\eps<\xi,~d(X_0,X_{\tau_0^\eps})< \eps_2,~\tau_0^\eps\leq t+1),
\end{align*}
so letting $\eps\to\eps_1$, since $\tau_0^\eps\uparrow\tau_1$, by \eqref{eqThmTension1} we obtain
\begin{align}\label{eqThmTension3}
\Pbf(X_0\in K,~\tau_1\leq\delta)\leq \alpha(\eps_2,t+2,U,\delta).
\end{align}
For all $\Pbf\in\Pcal$, $\delta\leq 1$, $n\in\N$ and $0<\eps<\eps_1$ we have also using the expression \eqref{eqThmTension2} with stopping times $\tau_n\leq\tau_n^\eps\leq\tau_{n+1}$ and $\tau_n\leq\tau_n^\eps\leq\tau_{n+1}^\eps$
\begin{align*}
\Pbf(\tau_{n+1}\leq t,~X_{\tau_n},X_{\tau_{n+1}}\in K,~\tau_{n+1}^\eps-\tau_n\leq\delta)\hspace{-7cm}&\hspace{14cm}\\
&\leq \Pbf(X_{\tau_n}\in K,~\tau_n^\eps<\xi,~d(X_{\tau_n},X_{\tau_n^\eps})<\eps_2,~\tau_n^\eps\leq t+1)\\
&\quad + \Pbf(X_{\tau_{n+1}}\in K,~\tau_{n+1}^\eps<\xi,~d(X_{\tau_{n+1}},X_{\tau_{n+1}^\eps})<\eps_2,~\tau_{n+1}^\eps\leq t+1)\\
&\quad +\Pbf\Big(\tau_{n+1}\leq t,~X_{\tau_n},X_{\tau_{n+1}}\in K,~d(X_{\tau_n},X_{\tau_n^\eps})\geq\eps_2,\\
&\pushright{d(X_{\tau_n^\eps},X_{\tau_{n+1}})\geq\frac{\eps_2}{2},\tau_{n+1}-\tau_n\leq\delta\Big)}\\
&\quad +\Pbf\Big(\tau_{n+1}\leq t,~X_{\tau_n},X_{\tau_{n+1}}\in K,~d(X_{\tau_n},X_{\tau_n^\eps})\geq\eps_2,~X_{\tau_{n+1}^\eps}\not\in B(X_{\tau_n^\eps},\frac{\eps_2}{2}),\\
&\pushright{\tau_{n+1}^\eps-\tau_n\leq\delta\Big)}\\
&\leq \Pbf(X_{\tau_n}\in K,~\tau_n^\eps<\xi,~d(X_{\tau_n},X_{\tau_n^\eps})\leq\eps_2,~\tau_n^\eps\leq t+1)\\
&\quad + \Pbf(X_{\tau_{n+1}}\in K,~\tau_{n+1}^\eps<\xi,~d(X_{\tau_{n+1}},X_{\tau_{n+1}^\eps})\leq\eps_2,~\tau_{n+1}^\eps\leq t+1)\\
&\quad +2\alpha\left(\frac{\eps_2}{2},t+2,U,\delta\right),
\end{align*}
so letting $\eps\to\eps_1$, since $\tau_{n+1}^\eps\uparrow\tau_{n+2}$, by \eqref{eqThmTension1} we obtain
\begin{align}\label{eqThmTension4}
\Pbf(\tau_{n+1}\leq t,~X_{\tau_n},X_{\tau_{n+1}}\in K,~\tau_{n+2}-\tau_n\leq\delta)\leq 2\alpha\left(\frac{\eps_2}{2},t+2,U,\delta\right).
\end{align}
For all $\Pbf\in\Pcal$, $\delta\leq 1$, $n\in\N^*$ and $0<\eps<\eps_1$ we can write using the expression \eqref{eqThmTension2} with stopping times $\tau_n\leq\tau_n\leq\tau_n^\eps$
\begin{align*}
\Pbf(\tau_n\leq t,~X_{\tau_n}\in K,~d(X_{\tau_n-},X_{\tau_n})\geq \eps_2,~\tau_{n}^\eps-\tau_n\leq\delta)\hspace{-7.5cm}&\\
& \leq \Pbf(X_{\tau_n}\in K,~\tau_n^\eps<\xi,~d(X_{\tau_n},X_{\tau_n^\eps})<\eps_2,~\tau_n^\eps\leq t+1)\\
&\quad+\Pbf(\tau_n\leq t,~X_{\tau_n}\in K,~d(X_{\tau_n-},X_{\tau_n})\geq \eps_2,~X_{\tau_n^\eps}\not\in B(X_{\tau_n},\eps_2),~\tau_{n}^\eps-\tau_n\leq\delta)\\
&\leq \Pbf(X_{\tau_n}\in K,~\tau_n^\eps<\xi,~d(X_{\tau_n},X_{\tau_n^\eps})\leq\eps_2,~\tau_n^\eps\leq t+1) +\alpha\left(\eps_2,t+2,U,\delta\right),
\end{align*}
so letting $\eps\to\eps_1$, since $\tau_n^\eps\uparrow\tau_{n+1}$, by \eqref{eqThmTension1} we obtain
\begin{align}\label{eqThmTension5}
\Pbf(\tau_n\leq t,~X_{\tau_n}\in K,~d(X_{\tau_n-},X_{\tau_n})\geq \eps_2,~\tau_{n+1}-\tau_n\leq\delta)\leq\alpha\left(\eps_2,t+2,U,\delta\right).
\end{align}
Let $m\in 2\N$ and $0<\delta^\prime\leq 1$ be such that $m>2t/\delta^\prime$ 
and denote the event
\[
A:=\left\{\tau_m\leq t\text{ and }\forall n\leq m,~X_{\tau_n}\in K\right\}.
\]
Then for all $0\leq i<m$, thanks to \eqref{eqThmTension4}
\begin{align*}
\Ebf\left[\tau_{i+2}-\tau_i\mid A\right]
&\geq \delta^\prime\Pbf\left(\tau_{i+2}-\tau_i\geq \delta^\prime\mid A\right)
\geq \delta^\prime\left(1-\frac{2\alpha\left(\frac{\eps_2}{2},t+2,U,\delta^\prime\right)}{\Pbf(A)}\right).
\end{align*}
Hence
\begin{align*}
t\geq \Ebf\left[\tau_m\mid A\right]=\sum_{i=0}^{(m-2)/2}\Ebf\left[\tau_{2i+2}-\tau_{2i}\mid A\right]\geq \frac{m\delta^\prime}{2}\left(1-\frac{2\alpha\left(\frac{\eps_2}{2},t+2,U,\delta^\prime\right)}{\Pbf(A)}\right),
\end{align*}
so
\begin{align}\label{eqThmTension6}
\Pbf(A)\leq \frac{2\alpha\left(\frac{\eps_2}{2},t+2,U,\delta^\prime\right)}{1-2t/(m\delta^\prime)}.
\end{align}
Taking  $0<\delta\leq 1$ and setting
\[
B_{m,\delta}:=\left\{\begin{array}{l}
(\tau_m,X_{\tau_0},\ldots,X_{\tau_m})\not\in[0,t]\times K^{m+1},\\
X_0\in K\Rightarrow\tau_1>\delta,\\
\forall 0\leq n\leq m-2,~\tau_{n+1}\leq t\text{ and }X_{\tau_n},X_{\tau_{n+1}}\in K\Rightarrow\tau_{n+2}-\tau_n>\delta,\\
\forall 0\leq n< m,~\tau_n\leq t,~X_{\tau_n}\in K,~d(X_{\tau_n-},X_{\tau_n})\geq \eps_2\Rightarrow\tau_{n+1}-\tau_n>\delta
\end{array}\right\},
\]
by \eqref{eqThmTension3}, \eqref{eqThmTension4}, \eqref{eqThmTension5} and \eqref{eqThmTension6} we obtain that 
\begin{align*}
\inf_{\Pbf\in\Pcal}\Pbf(B_{m,\delta})&\geq 1-\frac{2\alpha\left(\frac{\eps_2}{2},t+2,U,\delta^\prime\right)}{1-2t/(m\delta^\prime)}
-\alpha(\eps_2,t+2,U,\delta)\\
&\quad -2(m-1)\alpha\left(\frac{\eps_2}{2},t+2,U,\delta\right)
-m\alpha\left(\eps_2,t+2,U,\delta\right).
\end{align*}
Hence
\[
\sup_{m\in\N}\inf_{\Pbf\in\Pcal}\Pbf(B_{m,\delta})\cv{\delta\to 0}1.
\]
Recalling that $\eps_1< 4\eps_0$, a straightforward computation gives
\[
B_{m,\delta}\subset\left\{\omega^\prime_{t,K,X}(\delta)< \eps_0\right\}.
\]
We conclude that
\[
\inf_{\Pbf\in\Pcal}\Pbf(\omega^\prime_{t,K,X}(\delta)< \eps_0)\cv{\delta\to 0}1.
\]
\end{proof}

\section{Time change and Skorokhod topologies}
\subsection{Definition and properties of time change}

First we give the definition of the time change mapping (see also \S 6.1 pp. 306-311 from \cite{EK86}, \S V.26 pp. 175-177 from \cite{RW94}).

\begin{definition}[Time Change]
Let us introduce
\[
\rmC^{\not = 0}(S,\R_+):=\left\{g:S\to\R_+\mid \{g=0\} \text{ is closed and $g$ is continuous on }\{g\not =0\}\right\},
\]
and for $g\in\rmC^{\not = 0}(S,\R_+)$, $x\in\Dexp(S)$ and $t\in[0,\infty]$ we denote
\begin{align}\label{taug}
\tau_t^g(x):=\inf\left\{s\geq 0\mid A_s^g(x)\geq t\right\},\;\mbox{ where }\; A_t^g(x):=\left\{\begin{array}{ll}
\int_0^t\frac{\d u}{g(x_u)},&\text{if }t\in[0,\tau^{\{g\not=0\}}(x)],\\
\infty&\text{otherwise}.
\end{array}\right.
\end{align}
For $g\in\rmC^{\not = 0}(S,\R_+)$, we define a time change mapping, which is $\Fc$-measurable,
\[\begin{array}{cccc}
g\mycdot X:&\Dexp(S)&\rightarrow & \Dexp(S)\\
&x&\mapsto & g\mycdot x,
\end{array}\]
as follows:  for $t\in\R_+$
\begin{align}\label{eqdefTC}
(g\mycdot X)_t:=\left\{\begin{array}{ll}
X_{\tau_\infty^g-} & \text{if }t\geq A_{\tau_\infty^g}^g,~X_{\tau_\infty^g-}\text{ exists and belongs to }\{g=0\},\\
X_{\tau_t^g} & \text{otherwise}.
\end{array}\right.
\end{align}
For $g\in\rmC^{\not = 0}(S,\R_+)$ and $\Pbf\in\Pcal(\Dexp(S))$, we also define $g\mycdot\Pbf$ the pushforward of $\Pbf$ by $x\mapsto g\mycdot x$.
\end{definition}
The fact that this mapping is measurable will be proved in the next section.
\begin{remark}
Let us stress that, by using Corollary \ref{corST}, $\tau_t^g$ is a stopping time 
and, in particular
\[
\tau_\infty^g=\tau^{\{g\not=0\}}=\inf\left\{t\geq 0\mid g(X_{t-})\wedge g(X_t)=0\right\}\wedge\xi.
\]
The time of explosion of $g\mycdot X$ is given by 
\begin{equation*}
\xi(g\mycdot X) = \left\{\begin{array}{ll}
\infty & \text{if }\tau_\infty^g<\xi\text{ or }X_{\xi-}\text{ exists and belongs to }\{g=0\},\\
A_{\xi}^g & \text{otherwise}.
\end{array}\right.
\end{equation*}
\end{remark}
Roughly speaking $g\mycdot X$ is the solution of $(g\mycdot X)_t:=X_{\tau^g_t}$ with $\dot{\tau}^g_t:=g((g\mycdot X)_t)$, on the time interval  $[0,\tau_\infty^g)$. 

\begin{proposition}\label{propTC}~
\begin{enumerate}
\item For $U\subset S$ an open subset, by identifying
\[
\rmC(U,\R_+)=\big\{g\in\rmC^{\not = 0}(S,\R_+)\,|\,\{g\not=0\}\subset U\text{ and $g$ is continuous on }U\big\},
\]
the time change mapping 
\[\begin{array}{ccc}
\rmC(U,\R_+)\times\Dexp(S)&\rightarrow & \Dexp(S)\\
(g,x)&\mapsto & g\mycdot x,
\end{array}\]
is measurable between $\Bc(\rmC(U,\R_+))\otimes\Fc$ and $\Fc$.
\item If $g_1,g_2\in\rmC^{\not = 0}(S,\R_+)$ and $x\in \Dexp(S)$, then $g_1\mycdot(g_2\mycdot x)= (g_1g_2)\mycdot x$.
\item If $g$ is bounded and  belongs to $\rmC^{\not = 0}(S,\R_+)$, and  $x\in\D(S)$, then $g\mycdot x\in\D(S)$.
\item Define
\[
\widetilde{\rmC}^{\not = 0}(S,\R_+):=\big\{g\in \rmC^{\not = 0}(S,\R_+)\,|\,\forall K\subset S\text{ compact, }g(K)\text{ is bounded}\big\}.
\]
If $g\in\widetilde{\rmC}^{\not = 0}(S,\R_+)$ and $x\in\Dloc(S)$, then $g\mycdot x\in\Dloc(S)$.
\item If $g\in\rmC^{\not = 0}(S,\R_+)$ and $(\Pbf_a)_{a\in S}$ is a strong Markov family, then $(g\mycdot\Pbf_a)_{a\in S}$ is a Markov family. Furthermore, if $(\Pbf_a)_{a\in S}$ is a $\Fc_{t^+}$-strong Markov family, then $(g\mycdot\Pbf_a)_{a\in S}$ is a $\Fc_{t^+}$-strong Markov family.
\end{enumerate}
\end{proposition}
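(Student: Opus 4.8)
The plan is to prove the five assertions in order, the first two being structural and deterministic and serving as the basis for the probabilistic ones.

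\textbf{Assertions 1 and 2.} For Assertion~1 it suffices to show that $(g,x)\mapsto(g\mycdot x)_s$ is measurable into $S^\Delta$ for each fixed $s\ge0$; by \eqref{eqdefTC} this reduces, through the measurable maps $(x,t)\mapsto x_t$ and $(x,t)\mapsto x_{t-}$ of Proposition~\ref{propMes}, to the joint measurability of $(g,x,t)\mapsto A^g_t(x)$ and of $(g,x)\mapsto\tau^g_\infty(x)$ (the latter, for fixed $g$, being the stopping time $\tau^{\{g\ne0\}}(x)\wedge\xi(x)$ of Corollary~\ref{corST}). For $A^g$, I would approximate $1/g$ from below by the bounded continuous functions $(g\vee(1/n))^{-1}$ on $U$ — which depend continuously on $g$ because $g$ is continuous on all of $U$ — apply the last part of Proposition~\ref{propMes}, and let $n\to\infty$, installing the $\infty$-convention of \eqref{taug} beyond $\tau^{\{g\ne0\}}(x)$; the dependence of $\tau^g_\infty$ on $g$ is handled by a similar approximation, and $\tau^g_s(x)=\inf\{r\in\Q_+:A^g_r(x)\ge s\}$ is then measurable since $t\mapsto A^g_t(x)$ is non-decreasing and continuous up to $\tau^{\{g\ne0\}}(x)$ and infinite afterwards. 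Assertion~2 reduces to a change of variables in the defining integral: writing $y:=g_2\mycdot x$, so that $y_s=x_{\tau^{g_2}_s(x)}$ below explosion, the substitution $s=A^{g_2}_u(x)$ gives $A^{g_1}_t(y)=\int_0^{\tau^{g_2}_t(x)}\frac{\d u}{g_1(x_u)g_2(x_u)}=A^{g_1g_2}_{\tau^{g_2}_t(x)}(x)$, and inverting the clocks yields $\tau^{g_2}_{\tau^{g_1}_t(y)}(x)=\tau^{g_1g_2}_t(x)$, whence $\big(g_1\mycdot(g_2\mycdot x)\big)_t=y_{\tau^{g_1}_t(y)}=x_{\tau^{g_1g_2}_t(x)}=\big((g_1g_2)\mycdot x\big)_t$ away from the boundary, the two boundary clauses of \eqref{eqdefTC} being matched by the same substitution.

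\textbf{Assertions 3 and 4.} Both rest on the explosion-time formula from the remark after the definition (immediate from \eqref{eqdefTC}): $\xi(g\mycdot x)=\infty$ unless $\tau^g_\infty(x)=\xi(x)$ and $x_{\xi(x)-}$ either fails to exist or does not belong to $\{g=0\}$, in which case $\xi(g\mycdot x)=A^g_{\xi(x)}(x)$. For Assertion~3, if $\xi(x)=\infty$, then either $\tau^g_\infty(x)<\infty$ and the formula gives $\xi(g\mycdot x)=\infty$, or $\xi(g\mycdot x)=\int_0^\infty\frac{\d u}{g(x_u)}\ge\int_0^\infty\frac{\d u}{\sup g}=\infty$ by boundedness of $g$; in both cases $g\mycdot x\in\D(S)$. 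For Assertion~4, assume $\xi(g\mycdot x)\in(0,\infty)$ and $\{(g\mycdot x)_s\}_{s<\xi(g\mycdot x)}\Subset S$; then the formula gives $\xi(g\mycdot x)=A^g_{\xi(x)}(x)<\infty$, so $A^g(x)$ restricts to a finite continuous strictly increasing bijection from $[0,\xi(x))$ onto $[0,\xi(g\mycdot x))$ with inverse $\tau^g(x)$, and $\{(g\mycdot x)_s\}_{s<\xi(g\mycdot x)}=\{x_u\}_{u<\xi(x)}$. The closure of this set is compact, hence $g$ is bounded on it (here $g\in\widetilde{\rmC}^{\not=0}(S,\R_+)$), which forces $\xi(x)<\infty$ (otherwise $A^g_{\xi(x)}(x)=\infty$) and $\xi(x)>0$ (otherwise $\xi(g\mycdot x)=A^g_0(x)=0$); since $x\in\Dloc(S)$ the limit $x_{\xi(x)-}$ exists in $S$, and so $(g\mycdot x)_{\xi(g\mycdot x)-}=\lim_{u\uparrow\xi(x)}x_u=x_{\xi(x)-}$ exists, i.e. $g\mycdot x\in\Dloc(S)$.

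\textbf{Assertion 5.} Set $Q_a:=g\mycdot\Pbf_a$; the measurability axiom and $Q_a(X_0=a)=1$ follow from Assertion~1 and from $\tau^g_0=0$. The key is a shift identity: writing $\theta_r x:=(x_{r+u})_{u\ge0}$, the relation $A^g_{\tau^g_{t_0}(x)+r}(x)=t_0+A^g_r(\theta_{\tau^g_{t_0}(x)}x)$ yields $\tau^g_{t_0+t}(x)=\tau^g_{t_0}(x)+\tau^g_t(\theta_{\tau^g_{t_0}(x)}x)$ and hence $\theta_{t_0}(g\mycdot x)=g\mycdot\big(\theta_{\tau^g_{t_0}(x)}x\big)$, the boundary clauses of \eqref{eqdefTC} again matching. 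Since $(g\mycdot X)_s$ for $s\le t_0$ depends on $X$ only up to the time $\tau^g_s\le\tau^g_{t_0}$, for $C\in\Fc_{t_0}$ the event $\{g\mycdot X\in C\}$ and the point $(g\mycdot X)_{t_0}=X_{\tau^g_{t_0}}$ are $\Fc_{\tau^g_{t_0}}$-measurable; applying the strong Markov property of $(\Pbf_a)$ at the stopping time $\tau^g_{t_0}$ (Corollary~\ref{corST}) to $\1_C(g\mycdot X)\,\1_B\big(g\mycdot\theta_{\tau^g_{t_0}}X\big)$ and pushing forward by $g\mycdot X$ gives $Q_a\left(\theta_{t_0}X\in B\mid\Fc_{t_0}\right)=Q_{X_{t_0}}(B)$. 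Only the ordinary Markov property survives because the constant time $t_0$ has become the random time $\tau^g_{t_0}$. For the $\Fc_{t^+}$-strong Markov statement I would replace $t_0$ by an $\Fc_{t^+}$-stopping time $T_0$, observe that $\tau^g_{T_0}$ is then an $\Fc_{t^+}$-stopping time (from $\{\tau^g_{T_0}<u\}\in\Fc_{u^+}$, using the $\Fc_u$-measurability of $A^g_u$), and invoke the $\Fc_{t^+}$-strong Markov property of $(\Pbf_a)$.

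\textbf{Main obstacle.} The part I expect to be hardest is Assertion~1: carrying joint measurability in $(g,x,t)$ through the approximation of the unbounded map $1/g$ and through the passage to the inverse clock $\tau^g$. Beyond that, the only genuinely delicate steps are the shift identity of Assertion~5 and the verification that the two boundary clauses of \eqref{eqdefTC} transform as claimed in Assertions~2 and~5; the rest is bookkeeping on the clock $A^g$ and its inverse.
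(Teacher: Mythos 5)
Your treatment of assertions 1--4 follows essentially the same route as the paper (which disposes of 1 and 2 as ``straightforward by Proposition \ref{propMes}'' and ``a direct consequence of the definition'', and proves 3 and 4 by exactly your bounds $\xi(g\mycdot x)\geq\int_0^\infty \d s/\|g\|$ and $\xi(g\mycdot x)=\int_0^{\xi(x)}\d s/g(x_s)\geq \xi(x)/\|g\|_{\{x_s\}_{s<\xi(x)}}$); you supply more detail than the paper does there, and that part is fine.

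Assertion 5 contains a genuine gap. The shift identity $\theta_{t_0}(g\mycdot x)=g\mycdot\big(\theta_{\tau^g_{t_0}(x)}x\big)$, which you assert with ``the boundary clauses of \eqref{eqdefTC} again matching'', is \emph{false} precisely when the first clause of \eqref{eqdefTC} is active at time $t_0$, i.e.\ when $t_0\geq A^g_{\tau^g_\infty(x)}(x)$, $x_{\tau^g_\infty(x)-}$ exists and lies in $\{g=0\}$, and $x_{\tau^g_\infty(x)-}\neq x_{\tau^g_\infty(x)}$. Concretely, take $S=\R$, $g(y)=\sqrt{|y|}$, $x_t=1-t$ for $t<1$ and $x_t=5$ for $t\geq 1$: then $\tau^g_\infty(x)=1$, $A^g_1(x)=2$, $(g\mycdot x)_t=x_{1-}=0$ for $t\geq 2$, while $\theta_{\tau^g_2(x)}x=\theta_1x\equiv 5$ and hence $g\mycdot(\theta_1 x)\equiv 5\neq 0\equiv\theta_2(g\mycdot x)$. (Even the clock relation $A^g_{\tau^g_{t_0}(x)+r}(x)=t_0+A^g_r(\theta_{\tau^g_{t_0}(x)}x)$ you invoke fails there, since $A^g_{\tau^g_{t_0}(x)}(x)=A^g_{\tau^g_\infty(x)}(x)<t_0$.) As a consequence, your application of the strong Markov property at $\tau^g_{t_0}$ conditions on the wrong starting point on the event $\{(g\mycdot X)_{t_0}\neq X_{\tau^g_{t_0}}\}$: it produces $g\mycdot\Pbf_{X_{\tau^g_{t_0}}}(B)$ where the Markov property requires $g\mycdot\Pbf_{(g\mycdot X)_{t_0}}(B)$. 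The paper repairs exactly this by splitting on the $\Fc_{\tau^g_{t_0}}$-measurable event $\{(g\mycdot X)_{t_0}\neq X_{\tau^g_{t_0}}\}$: on it, $(g\mycdot X)_{t_0+s}$ is constant in $s$ and $g\big((g\mycdot X)_{t_0}\big)=0$, so that $g\mycdot\Pbf_{(g\mycdot X)_{t_0}}$ is the law of that constant path and the identity holds for a different reason; your shift identity is only valid (and is then used as you describe) on the complementary event. The final statement is therefore true, but your argument needs this case distinction to go through.
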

\begin{proof}
The first point is straightforward by using Proposition \ref{propMes}, while the second point 
is a direct consequence of the time change definition and, in particular, of the first part of \eqref{eqdefTC}.
The third point can be easily deduced because,
\[
\xi(g\mycdot x)\geq\int_0^{\infty}\frac{\d s}{g(x_s)}\geq \int_0^{\infty}\frac{\d s}{\|g\|}=\infty.
\]
To prove the fourth point we suppose that $\xi(g\mycdot x)<\infty$ and $\{g\mycdot x_s\}_{s<\xi(g\mycdot x)}\Subset S$. Then $\{x_s\}_{s<\xi(x)}=\{g\mycdot x_s\}_{s<\xi(g\mycdot x)}$ so
\[
\infty>\xi(g\mycdot x)=\int_0^{\xi(x)}\frac{\d s}{g(x_s)}\geq \frac{\xi(x)}{\|g\|_{\{x_s\}_{s<\xi(x)}}}.
\]
Hence $\xi(x)<\infty$ and so $g\mycdot x_{\xi(g\mycdot x)-}=x_{\xi(x)-}$ exists.
In proving the last point, we simplify the notation by  setting $\tau_t:=\tau_t^g$. It is straightforward (using especially Proposition \ref{propMes} and Corollary \ref{corST}) to obtain the following facts:

\noindent
\begin{itemize}
\item $\tau_t$ is a stopping time,
\item $g\mycdot X_t$ is $\Fc_{\tau_t}$-measurable,
\item $\left\{g\mycdot X_t\not = X_{\tau_t}\right\}\in\Fc_{\tau_t}$,
\item $g\mycdot X_t\not = X_{\tau_t}$ implies $g(g\mycdot X_t)=0$ and $(g\mycdot X_{t+s})_{s\geq 0}$ is constant,
\item $g\mycdot X_t = X_{\tau_t}$ imply $(g\mycdot X_{t+s})_{s\geq 0}=g\mycdot(X_{\tau_t+s})_{s\geq 0}$\,.
\end{itemize}
Suppose that $(\Pbf_a)_{a\in S}$ is a $(\Fc_t)_t$-strong Markov family, then for any $t_0\in\R_+$, $a\in S^\Delta$ and $B\in\Fc$, $\Pbf_a$-a.s.
\begin{align*}
\Pbf_a\left((g\mycdot X_{t_0+t})_t\in B\mid\Fc_{\tau_{t_0}}\right)
& = \Pbf_a\left((g\mycdot X_{t_0+t})_t\in B,~g\mycdot X_{t_0}= X_{\tau_{t_0}}\mid\Fc_{\tau_{t_0}}\right)\\
&\hspace{.5cm} + \Pbf_a\left((g\mycdot X_{t_0+t})_t\in B,~g\mycdot X_{t_0}\not= X_{\tau_{t_0}}\mid\Fc_{\tau_{t_0}}\right) \\
& = \Pbf_a\left(g\mycdot (X_{\tau_{t_0}+t})_t\in B\mid\Fc_{\tau_{t_0}}\right)\1_{\{g\mycdot X_{t_0}= X_{\tau_{t_0}}\}}\\
&\hspace{.5cm} + \Pbf_a\left((g\mycdot X_{t_0})_t\in B\mid\Fc_{\tau_{t_0}}\right)\1_{\{g\mycdot X_{t_0}\not= X_{\tau_{t_0}}\}} \\
&=\Pbf_{X_{\tau_0}}(g\mycdot X\in B)
=g\mycdot \Pbf_{g\mycdot X_{t_0}}(B).
\end{align*}
Hence $(g\mycdot P_a)_{a\in S}$ is a $(\Fc_t)_t$-Markov family.
If $(\Pbf_a)_a$ is a $\Fc_{t^+}$-strong Markov family, then for all $(\Fc_{\tau_t^+})_t$-stopping time $\sigma$,
\[
\{\tau_\sigma<t\} = \bigcup_{q\in\Q_+}\{\sigma<q,~\tau_q<t\}\in\Fc_t,
\]
so $\tau_\sigma$ is a $\Fc_{t^+}$-stopping time. Using the same argument as before we obtain that $(g\mycdot\Pbf_a)_a$ is a $\Fc_{t^+}$-strong Markov family.
\end{proof}
Another interesting fact is the following:
\begin{theorem}[Continuity of the time change]\label{thmContTC}
Denote by $B$ the set of couples $(g,x)\in\widetilde{\rmC}^{\not = 0}(S,\R_+)\times\Dloc(S)$ such that
\begin{align}\label{eqB1}
&\tau_\infty^g(x)<\xi(x)\;\mbox{implies }\;\int_0^{\tau_\infty^g(x)+}\frac{\d s}{g(x_s)}=\infty,
\end{align}
and such that 
\begin{align}\label{eqB2}
&A^g_{\tau_\infty^g(x)}(x)<\infty,~x_{\tau_\infty^g(x)-}\text{ exists in }S\text{ and }g(x_{\tau_\infty^g(x)-})=0\;\mbox{ imply }\; x_{\tau_\infty^g(x)-}=x_{\tau_\infty^g(x)}.
\end{align}
Then the time change
\[\begin{array}{ccc}
\widetilde{\rmC}^{\not = 0}(S,\R_+)\times\Dloc(S)&\rightarrow & \Dloc(S)\\
(g,x)&\mapsto & g\mycdot x
\end{array}\]
is continuous on $B$ when we endow respectively $\widetilde{\rmC}^{\not = 0}(S,\R_+)$ with the 
topology of uniform convergence on compact sets and $\Dloc(S)$ with the local Skorokhod topology. In particular
\[\begin{array}{ccc}
\rmC(S,\R_+^*)\times\Dloc(S)&\rightarrow & \Dloc(S)\\
(g,x)&\mapsto & g\mycdot x
\end{array}\]
is continuous for the topologies  of uniform convergence on compact sets and local Skorokhod topology.
\end{theorem}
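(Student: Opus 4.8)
\emph{Proof plan.} Both factor spaces are metrizable — $\Dloc(S)$ carries the Polish local Skorokhod topology, and $\widetilde{\rmC}^{\not = 0}(S,\R_+)$ with uniform convergence on compacts is metrizable since $S$ is $\sigma$-compact — so it suffices to prove sequential continuity on $B$. Fix $(g,x)\in B$ and a sequence $(g^k,x^k)\to(g,x)$, i.e.\ $g^k\to g$ uniformly on every compact subset of $S$ and $x^k\to x$ for the local Skorokhod topology. By Theorem~\ref{thmLocSkoTop} there are $\lambda^k\in\Lambda$ realising $x^k\to x$, and by Theorem~\ref{th2ndCarCvg} it is enough to produce $\mu^k\in\widetilde\Lambda$ with the corresponding uniform estimates for $g^k\mycdot x^k$ relative to $g\mycdot x$. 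The candidate is
\[
\mu^k:=A^{g^k}(x^k)\circ\lambda^k\circ\tau^g(x),
\]
which is continuous, strictly increasing and vanishes at $0$ on the time interval where $\tau^g_\cdot(x)<\tau^{\{g^k\not=0\}}(x^k)$, and which we extend beyond that interval by a unit-slope affine continuation so that $\mu^k\in\widetilde\Lambda$. Everything rests on two elementary facts: the identity $(g^k\mycdot x^k)_{\mu^k_t}=x^k_{\lambda^k_{\tau^g_t(x)}}$, valid whenever $\tau^g_t(x)<\tau^{\{g^k\not=0\}}(x^k)$ (there $A^{g^k}(x^k)$ is a continuous increasing bijection onto its range and $g^k\mycdot x^k$ agrees with $X\circ\tau^{g^k}(x^k)$), and the change of variables $A^{g^k}_{\lambda^k_s}(x^k)=\int_0^s\dot\lambda^k_r\,g^k(x^k_{\lambda^k_r})^{-1}\,\d r$.

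The core is a regularity statement at the critical time $\tau^g_\infty(x)$. Fix $t<\xi(g\mycdot x)$. From the description of $\xi(g\mycdot x)$ recalled after the definition of the time change, together with hypothesis~\eqref{eqB1} (which, when $\tau^g_\infty(x)<\xi(x)$, forces $A^g_s(x)\to\infty$ as $s\uparrow\tau^g_\infty(x)$), one obtains $\tau^g_t(x)<\tau^g_\infty(x)\le\xi(x)$; hence $x_s$ and all left limits $x_{s-}$ for $s\le\tau^g_t(x)+\eta$ (some $\eta>0$) lie in a fixed compact $K\Subset\{g\not=0\}$. Since $x^k_{\lambda^k_s}\to x_s$ uniformly on $[0,\tau^g_t(x)+\eta]$ and $g^k\to g$ uniformly on a compact neighbourhood $K'$ of $K$ inside $\{g\not=0\}$, one deduces that for $k$ large $x^k_{\lambda^k_s}$ (and $x^k_{\lambda^k_s-}$) stay in $K'$, that $\lambda^k_{\tau^g_t(x)+\eta}<\tau^{\{g^k\not=0\}}(x^k)$, and that $\dot\lambda^k_r\,g^k(x^k_{\lambda^k_r})^{-1}\to g(x_r)^{-1}$ uniformly in $r\le\tau^g_t(x)+\eta$ (using $\|\log\dot\lambda^k\|\to0$ and uniform continuity of $g$ on $K'$). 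Integrating, $A^{g^k}_{\lambda^k_s}(x^k)\to A^g_s(x)$ uniformly in $s\le\tau^g_t(x)+\eta$; composing with $\tau^g(x)$ gives $\mu^k_u\to u$ uniformly on $[0,t]$, so $\|\mu^k-{\rm id}\|_t\to0$. Simultaneously $(g^k\mycdot x^k)_{\mu^k_u}=x^k_{\lambda^k_{\tau^g_u(x)}}\to x_{\tau^g_u(x)}=(g\mycdot x)_u$ uniformly in $u\le t$, and $\mu^k_t=A^{g^k}_{\lambda^k_{\tau^g_t(x)}}(x^k)<A^{g^k}_{\xi(x^k)}(x^k)\le\xi(g^k\mycdot x^k)$ for $k$ large (since $\lambda^k_{\tau^g_t(x)}<\xi(x^k)$). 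This is precisely the second alternative of Theorem~\ref{thmLocSkoTop}.

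It remains to treat the first alternative, $\xi(g\mycdot x)<\infty$ with $\{(g\mycdot x)_s\}_{s<\xi(g\mycdot x)}\Subset S$. The formula for $\xi(g\mycdot x)$ forces $\tau^g_\infty(x)=\xi(x)$, $\xi(g\mycdot x)=A^g_{\xi(x)}(x)$ and $\{x_s\}_{s<\xi(x)}\Subset\{g\not=0\}$, so the integrand estimate above extends uniformly up to $s=\xi(x)$; combined with the compact-case conclusions of Theorem~\ref{thmLocSkoTop} for $x^k\to x$ (namely $\lambda^k_{\xi(x)}\le\xi(x^k)$ for $k$ large and $x^k_{\lambda^k_{\xi(x)}}\to\Delta$) one gets $\mu^k_{\xi(g\mycdot x)}=A^{g^k}_{\lambda^k_{\xi(x)}}(x^k)\le\xi(g^k\mycdot x^k)$ and $(g^k\mycdot x^k)_{\mu^k_{\xi(g\mycdot x)}}=x^k_{\lambda^k_{\xi(x)}}\to\Delta$, which finishes this case.

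The main obstacle, and the reason for the precise hypotheses defining $B$, is the behaviour at $\tau^g_\infty(x)$. Hypothesis~\eqref{eqB1} is what keeps every finite-time estimate inside the region $\{g\not=0\}$, where $g^k(x^k_\cdot)$ is bounded away from $0$ and the change of variables is legitimate; without it $g\mycdot x$ could reach the boundary of $\{g\not=0\}$ in finite time and the perturbations could behave erratically there. Hypothesis~\eqref{eqB2} covers the case where $g\mycdot x$ becomes constant equal to $x_{\tau^g_\infty(x)-}\in\{g=0\}$ after the finite time $A^g_{\tau^g_\infty(x)}(x)$: then $g^k\mycdot x^k$ need not become exactly constant (since $g^k$ need not vanish near $x_{\tau^g_\infty(x)-}$), and one must still show it converges to that constant path; the continuity $x_{\tau^g_\infty(x)-}=x_{\tau^g_\infty(x)}$ supplied by~\eqref{eqB2} is exactly what lets one approximate the frozen value, after which $\mu^k$ is continued by the identity past the freezing instant. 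Finally the "in particular" assertion is immediate: for $g\in\rmC(S,\R_+^*)$ one has $\{g=0\}=\emptyset$, whence $\tau^g_\infty(x)=\xi(x)$, so the hypothesis of~\eqref{eqB1} ($\tau^g_\infty(x)<\xi(x)$) fails and the hypothesis of~\eqref{eqB2} ($g(x_{\tau^g_\infty(x)-})=0$) fails; both hold vacuously and $B\supset\rmC(S,\R_+^*)\times\Dloc(S)$.
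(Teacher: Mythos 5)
Your construction of the time change $\mu^k=A^{g^k}(x^k)\circ\lambda^k\circ\tau^g(x)$ is exactly the paper's (Lemma \ref{lmContTC}), and your treatment of the regime where $\tau^g_t(x)$ stays strictly below $\tau^g_\infty(x)$, as well as of the compact-explosion case $\tau^g_\infty(x)=\xi(x)<\infty$ with $\{x_s\}_{s<\xi(x)}\Subset\{g\not=0\}$, is sound and matches the paper's cases $B_1$ and $B_2$. But there is a genuine gap, and it sits precisely where the hypotheses \eqref{eqB1}--\eqref{eqB2} do their work. Your parenthetical reading of \eqref{eqB1} — that $\tau^g_\infty(x)<\xi(x)$ forces $A^g_s(x)\to\infty$ as $s\uparrow\tau^g_\infty(x)$ — is wrong: the condition $\int_0^{\tau^g_\infty(x)+}\frac{\d s}{g(x_s)}=\infty$ is a limit from the \emph{right} (over intervals $[0,\tau^g_\infty(x)+\eps]$), and it is perfectly compatible with $A^g_{\tau^g_\infty(x)}(x)<\infty$ (take $S=\R$, $g(y)=|y|$, $x_t=\sqrt{1-t}$ for $t<1$ and $x_t=-(t-1)$ for $t\geq 1$: then $\tau^g_\infty(x)=1$, $A^g_1(x)=2$, yet the integral past $1$ diverges). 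In that situation $\xi(g\mycdot x)=\infty$ while $\tau^g_t(x)=\tau^g_\infty(x)$ for every $t\geq A^g_{\tau^g_\infty(x)}(x)$, so your claim that $t<\xi(g\mycdot x)$ implies $\tau^g_t(x)<\tau^g_\infty(x)$ fails, and the whole ``stay in a compact $K'\Subset\{g\not=0\}$'' argument breaks down for such $t$: the path has reached (or jumped into) $\{g=0\}$ and $g^k$ is no longer bounded below along $x^k$.

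This frozen-path regime (the paper's cases $B_3$ and $B_4$) is the hard part of the theorem, and your last paragraph only gestures at it. What must be proved is that for every $v\geq 0$ the approximations $g^k\mycdot x^k$ remain, for large $k$, in an arbitrarily small neighbourhood of $\{x_s\}_{t\leq s\leq\tau^g_\infty(x)}$ on the whole interval $[A^g_t(x),A^g_t(x)+v]$; equivalently, that the clocks $A^{g^k}(x^k)$ also blow up immediately after $\lambda^k_{\tau^g_\infty(x)}$, uniformly enough in $k$. The paper gets this from Fatou's lemma:
\begin{equation*}
\liminf_{k\to\infty}\int_{\lambda^k_u}^{(\widetilde{\lambda}^k_{\tau_\infty})^+}\frac{\d s}{g_k(x^k_s)}\geq\int_t^{\tau_\infty^+}\frac{\d s}{g(x_s)}=\infty,
\end{equation*}
which is where \eqref{eqB1} (as a right limit) actually enters; without it the time-changed approximations could run off along the portion of $x^k$ beyond $\tau^g_\infty(x)$ in finite changed time. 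You also do not address the case $g(x_{\tau^g_\infty(x)-})\not=0$ with $g(x_{\tau^g_\infty(x)})=0$ (a jump into the zero set, where \eqref{eqB2} is vacuous), which needs the same estimate applied at $t=\tau^g_\infty(x)$ itself. Your reduction to sequential continuity, the identity $(g^k\mycdot x^k)_{\mu^k_t}=x^k_{\lambda^k_{\tau^g_t(x)}}$, and the ``in particular'' statement are all fine; the missing content is exactly part (ii) of Lemma \ref{lmContTC} and its use in the cases $B_3$ and $B_4$.
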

\begin{remark}\label{rkContTC}
1) It is not difficult to prove that $B$ is the continuity set.\\
2) If $(g,x)\in B$ and $h\in \widetilde{\rmC}^{\not = 0}(S,\R_+)$ is such that $\{h=0\}=\{g=0\}$ and $h\leq Cg$ for a constant $C\in\R_+$, then $(h,x)\in B$.\\
3) More generally, let $B_0$ be the set of $(g,x)\in\widetilde{\rmC}^{\not = 0}(S,\R_+)\times\Dloc(S)$ such that
\begin{align*}
&\tau_\infty^g(x)<\infty\Rightarrow \forall t\geq 0,~x_{\tau_\infty^g(x)+t}=x_{\tau_\infty^g(x)},\\
&x_{\tau_\infty^g(x)-}\text{ exists in }S\text{ and }g(x_{\tau_\infty^g(x)-})=0\Rightarrow x_{\tau_\infty^g(x)-}=x_{\tau_\infty^g(x)}.
\end{align*}
Then
\begin{align}\label{eqRkContTC}
\left\{(g,g\mycdot x)\mid (g,x)\in\widetilde{\rmC}^{\not = 0}(S,\R_+)\times\Dloc(S)\right\}\subset B_0\subset B.
\end{align}
4) A similar theorem may be proved for $g\mycdot X:\rmC^{\not = 0}(S,\R_+)\times\Dexp(S)\to \Dexp(S)$ using the topology described in Remark \ref{rkOthLocSko}.
\end{remark}

To simplify the proof of the theorem we use a technical result containing a construction of a sequence of bi-Lipschitz bijections $(\lambda^k)_{k}$ useful when proving the convergence. Before stating this result let us note that, for any  $(g,x)\in\widetilde{\rmC}^{\not = 0}(S,\R_+)\times\Dloc(S)$ and any $t\leq\tau_\infty^g(x)$ such that $\{x_s\}_{s<t}\Subset\{g\not = 0\}$,  by using \eqref{taug}, $A^g_t(x)<\infty$.
\begin{lemma}\label{lmContTC}
Take a metric $d$ of $S$. Let $x,x^k\in \Dloc(S)$ and $g,g_k\in\widetilde{\rmC}^{\not = 0}(S,\R_+)$ be such that $(g_k,x^k)$ converges to $(g,x)$, as $k\to\infty$. Let $t\leq\tau_\infty^g(x)$ be such that $\{x_s\}_{s<t}\Subset\{g\not = 0\}$. Then
\begin{itemize}
\item[i)] there exists a sequence $(\lambda^k)_k\in\Lambda^\N$ such that, for $k$ large enough $\lambda^k_{A^g_t(x)}\leq\xi(g_k\mycdot x^k)$ and
\[
\|\log\dot{\lambda}^k\|\to 0,\quad \sup_{v<A^g_t(x)}d(g\mycdot x_v,g_k\mycdot x^k_{\lambda^k_v})\to 0,\quad g_k\mycdot x^k_{\lambda^k_{A^g_t(x)}}\to g\mycdot x_{A^g_t(x)},\;\mbox{ as }k\to\infty.
\]
\item[ii)] Moreover, if $\tau_\infty^g(x)<\xi(x)$ and $\int_0^{\tau_\infty^g(x)+}\frac{\d s}{g(x_s)}=\infty$, $(\lambda^k)_k$ may be chosen such that for any $v\geq 0$ 
and $k$ large enough
$\lambda^k_{A^g_t(x)+v}<\xi(g_k\mycdot x^k)$ and
\[
\limsup_{k\to\infty}\sup_{A^g_t(x)\leq w\leq A^g_t(x)+v}d\left(g_k\mycdot x^k_{\lambda^k_w},\{x_s\}_{t\leq s\leq \tau_\infty^g(x)}\right)=0.
\]
\end{itemize}
\end{lemma}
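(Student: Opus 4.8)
The goal is to construct, from the given convergence $(g_k,x^k)\to(g,x)$ in $\widetilde{\rmC}^{\not=0}(S,\R_+)\times\Dloc(S)$, a sequence of bi-Lipschitz time-reparametrisations $(\lambda^k)_k$ witnessing the convergence $g_k\mycdot x^k\to g\mycdot x$ up to time $A^g_t(x)$. The natural idea is to \emph{transport} the reparametrisations $(\mu^k)_k\subset\Lambda$ coming from the hypothesis $x^k\to x$ (via Theorem~\ref{thmLocSkoTop}) through the additive functionals. Concretely, since $\{x_s\}_{s<t}\Subset\{g\not=0\}$, for $k$ large the restriction of $x^k$ to $[0,t]$ (reparametrised by $\mu^k$) stays in a fixed compact subset of $\{g\not=0\}$ on which $g$ and $g_k$ are uniformly close and bounded away from $0$ and $\infty$; so $A^{g_k}_{\mu^k_s}(x^k)$ is well defined for $s\le t$ and converges uniformly to $A^g_s(x)$ as $k\to\infty$. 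The candidate $\lambda^k$ is then the map sending $A^g_s(x)\mapsto A^{g_k}_{\mu^k_s}(x^k)$, i.e. $\lambda^k := A^{g_k}(x^k)\circ\mu^k\circ(A^g(x))^{-1}$ on $[0,A^g_t(x)]$, extended affinely (with slope in a fixed compact subset of $(0,\infty)$) beyond $A^g_t(x)$. One checks $\lambda^k\in\Lambda$ because all three ingredients are locally bi-Lipschitz: $A^g(x)$ and $A^{g_k}(x^k)$ have densities $1/g(x_\bullet)$, $1/g_k(x^k_\bullet)$ which on the relevant compact set lie in a fixed $[c,C]\subset(0,\infty)$, and $\mu^k\in\Lambda$.

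\textbf{Verifying the three convergences of (i).} With this choice, $g_k\mycdot x^k_{\lambda^k_v}$ at $v=A^g_s(x)$ equals (for $k$ large, before any explosion) $x^k_{\tau^{g_k}_{A^{g_k}_{\mu^k_s}(x^k)}}=x^k_{\mu^k_s}$ whenever $\mu^k_s\le\tau^{\{g_k\not=0\}}(x^k)$ — which holds for $s<t$ since $x^k$ stays in $\{g_k\not=0\}$ there — while $g\mycdot x_v = x_s$. Hence $\sup_{v<A^g_t(x)}d(g\mycdot x_v, g_k\mycdot x^k_{\lambda^k_v}) = \sup_{s<t} d(x_s, x^k_{\mu^k_s})\to 0$ directly from Theorem~\ref{thmLocSkoTop}. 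For the endpoint, $g_k\mycdot x^k_{\lambda^k_{A^g_t(x)}} = x^k_{\mu^k_t}\to x_t = g\mycdot x_{A^g_t(x)}$ (using $t\le\tau^g_\infty(x)$, so there is no explosion term in $g\mycdot x$ at that time). For $\|\log\dot\lambda^k\|\to 0$ one computes $\dot\lambda^k_v = \dot\mu^k_s\cdot \dfrac{g(x_s)}{g_k(x^k_{\mu^k_s})}$ at $v = A^g_s(x)$; the factor $\dot\mu^k_s\to 1$ uniformly by hypothesis, and $g(x_s)/g_k(x^k_{\mu^k_s})\to 1$ uniformly because $g$ is uniformly continuous on the fixed compact set, $d(x_s,x^k_{\mu^k_s})\to 0$ uniformly, and $g_k\to g$ uniformly on compacts — taking logs gives $\|\log\dot\lambda^k\|_{A^g_t(x)}\to 0$, and beyond $A^g_t(x)$ we arranged the slope $\to 1$ as well. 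The inequality $\lambda^k_{A^g_t(x)}\le\xi(g_k\mycdot x^k)$ for $k$ large follows from the formula for $\xi(g_k\mycdot x^k)$ in the Remark after the time-change definition, since $\mu^k_t<\xi(x^k)$ and $A^{g_k}_{\mu^k_t}(x^k)$ is finite.

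\textbf{Part (ii).} Now assume in addition $\tau^g_\infty(x)<\xi(x)$ and $\int_0^{\tau^g_\infty(x)+}\!\frac{\d s}{g(x_s)}=\infty$. The point is that the path $x$, just after $t$, stays in the compact set $\{x_s\}_{t\le s\le\tau^g_\infty(x)}$ (which is indeed compact: it is $\overline{\{x_s\}_{s<\tau^g_\infty(x)}}$ together with $x_{\tau^g_\infty(x)}$, and the divergence of the integral together with $x\in\Dloc(S)$ forces $\{x_s\}_{s<\tau^g_\infty(x)}\Subset S$, hence the left limit exists), while $A^g_\bullet(x)$ reaches $+\infty$ in finite Skorokhod time: $A^g_{\tau^g_\infty(x)}(x)=\int_0^{\tau^g_\infty(x)}\frac{\d s}{g(x_s)}$ may be finite but the extra $+$ in the integral means that just past that time the additive functional runs off. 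The plan is to extend $\mu^k$ past $t$ so that $\mu^k$ continues to track $x$ inside this compact set for a further macroscopic amount of time — using the second bullet of Theorem~\ref{thmLocSkoTop} applied with $t'$ any time in $(t,\tau^g_\infty(x))$, or a diagonal argument letting $t'\uparrow\tau^g_\infty(x)$ — and then define $\lambda^k$ beyond $A^g_t(x)$ by the same transport formula $A^{g_k}(x^k)\circ\mu^k\circ(A^g(x))^{-1}$, which now maps $[A^g_t(x),\infty)$ into $[A^g_t(x),\xi(g_k\mycdot x^k))$ with $\xi(g_k\mycdot x^k)\to\infty$. Then $g_k\mycdot x^k_{\lambda^k_w}$ for $w\in[A^g_t(x),A^g_t(x)+v]$ equals $x^k_{\mu^k_{s}}$ with $s$ ranging over a subinterval of $[t,\tau^g_\infty(x)]$ (for $k$ large), so it lies within $o(1)$ of $\{x_s\}_{t\le s\le\tau^g_\infty(x)}$, giving the claimed $\limsup = 0$.

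\textbf{Main obstacle.} The delicate point is part (ii), specifically controlling what happens \emph{near} $\tau^g_\infty(x)$ and ensuring $\lambda^k$ does not run out of domain: one must guarantee that, for every fixed $v$, the reparametrised path $x^k_{\mu^k_\bullet}$ is defined and stays in the fixed compact set on a time interval whose image under $A^{g_k}(x^k)$ covers $[A^g_t(x), A^g_t(x)+v]$, and simultaneously that $x^k$ has not exploded — this requires a careful choice of how far past $t$ to push $\mu^k$, coupled with the fact that the divergence of the integral $\int^{\tau^g_\infty(x)+}$ is what makes the target interval $[0,A^g_t(x)+v]$ reachable while keeping the underlying time below $\tau^g_\infty(x)+o(1)<\xi(x)$. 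Handling the two cases ($A^g_{\tau^g_\infty(x)}(x)$ finite versus infinite) and the possible jump of $x$ at $\tau^g_\infty(x)$ uniformly in $k$ is where the bookkeeping concentrates; the rest is the routine uniform-continuity estimates sketched above.
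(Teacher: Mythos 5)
Your part~i) is essentially the paper's proof: the paper takes $\widetilde{\lambda}^k$ from Theorem~\ref{thmLocSkoTop} and sets $\lambda^k_v:=A^{g_k}_{\widetilde{\lambda}^k_{\tau^g_v(x)}}(x^k)$ for $v\leq A^g_t(x)$, i.e.\ exactly your transport map $A^{g_k}(x^k)\circ\widetilde{\lambda}^k\circ(A^g(x))^{-1}$, with $\dot{\lambda}^k\equiv 1$ afterwards, and the three verifications (the sup over $v<A^g_t(x)$ reducing to $\sup_{s<t}d(x_s,x^k_{\widetilde{\lambda}^k_s})$, the endpoint, and $\log\dot{\lambda}^k_v=\log\dot{\widetilde{\lambda}}^k_s+\log(g(x_s)/g_k(x^k_{\widetilde{\lambda}^k_s}))$ at $v=A^g_s(x)$) are the ones you give. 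This part is fine.

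Part~ii) has a genuine gap, and it sits precisely at the point you label the ``main obstacle''. First, your proposed extension of $\lambda^k$ beyond $A^g_t(x)$ by the same formula $A^{g_k}(x^k)\circ\mu^k\circ(A^g(x))^{-1}$ is not available in the case that matters: when $A^g_{\tau^g_\infty(x)}(x)<\infty$ (case $B_3$ in the proof of Theorem~\ref{thmContTC}), the map $s\mapsto A^g_s(x)$ has bounded range $[0,A^g_{\tau^g_\infty(x)}(x)]$, so $(A^g(x))^{-1}$ degenerates there and the composite cannot be an increasing bijection defined on all of $[A^g_t(x),A^g_t(x)+v]$ for large $v$; your claim that it ``maps $[A^g_t(x),\infty)$ into $[A^g_t(x),\xi(g_k\mycdot x^k))$'' fails exactly then. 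Second, the one analytic step that makes ii) true --- transferring the divergence $\int_0^{\tau^g_\infty(x)+}\d s/g(x_s)=\infty$ from the limit path to the approximants, namely $\liminf_k\int_{\widetilde{\lambda}^k_t}^{(\widetilde{\lambda}^k_{\tau^g_\infty(x)})^+}\d s/g_k(x^k_s)=\infty$, which the paper obtains by the substitution $s=\widetilde{\lambda}^k_w$ and Fatou's lemma --- is only asserted, not proved; without it one gets neither $\lambda^k_{A^g_t(x)+v}<\xi(g_k\mycdot x^k)$ nor the confinement of the underlying time. The paper's resolution is simpler than what you attempt: keep the part-i) $\lambda^k$ with slope $1$ after $A^g_t(x)$ (no tracking of $g\mycdot x$ is claimed there --- note the conclusion of ii) is only proximity to the compact set $\{x_s\}_{t\leq s\leq\tau^g_\infty(x)}$, not convergence), and use the Fatou estimate to show $\limsup_k\bigl(\tau^{g_k}_{\lambda^k_{A^g_t(x)+v}}(x^k)-\widetilde{\lambda}^k_{\tau^g_\infty(x)}\bigr)\leq 0$, so that $g_k\mycdot x^k_{\lambda^k_w}=x^k_{s}$ with $s$ in an $o(1)$-neighbourhood of $[\widetilde{\lambda}^k_t,\widetilde{\lambda}^k_{\tau^g_\infty(x)}]$. (Minor point: the compactness of $\{x_s\}_{s\leq\tau^g_\infty(x)}$ needs only $\tau^g_\infty(x)<\xi(x)$ and the \cadlag property, not the divergence of the integral.)
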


We postpone the proof of the lemma and we give the proof of the continuity of time change:
\begin{proof}[Proof of Theorem \ref{thmContTC}]
We remark first that
\[
B=B_1\cup B_2\cup B_3\cup B_4,
\]
with
\begin{align*}
&B_1:=\left\{A^g_{\tau_\infty^g(x)}(x)=\infty\text{ or }\{x_s\}_{s<\tau_\infty^g(x)}\not\Subset S\right\},\\
&B_2:=\left\{\tau_\infty^g(x)=\xi(x)<\infty\text{ and }\{x_s\}_{s<\tau_\infty^g(x)}\Subset\{g\not = 0\}\right\},\\
&B_3:=\left\{\tau_\infty^g(x)<\xi(x),~x_{\tau_\infty^g(x)-}=x_{\tau_\infty^g(x)},~A^g_{\tau_\infty^g(x)}(x)<\infty\text{ and }\int_0^{\tau_\infty^g(x)^+}\frac{\d s}{g(x_s)}= \infty\right\},\\
&B_4:=\left\{\tau_\infty^g(x)<\xi(x),~g(x_{\tau_\infty^g(x)-})\not=0\text{ and }\int_0^{\tau_\infty^g(x)^+}\frac{\d s}{g(x_s)}= \infty\right\}.
\end{align*}
Let $x,x^k\in \Dloc(S)$ and $g,g_k\in\widetilde{\rmC}^{\not = 0}(S,\R_+)$ be such that $(g_k,x^k)$ converge to $(g,x)$ and $(g,x)\in B$.
We need to prove that
\begin{equation}\label{eqContTC}
g_k\mycdot x^k\xrightarrow[k\to\infty]{\Dloc(S)}g\mycdot x,
\end{equation}
and we will decompose the proof with respect to values of $i$ such that  $(g,x)\in B_i$.
\begin{itemize}
\item If $(g,x)\in B_1$, we use the first part of Lemma \ref{lmContTC} for all $t<\tau_\infty^g(x)$.  We obtain that $A^g_t(x)<\xi(g\mycdot x)$. Since
$A^g_t(x)$ tends to $\xi(g\mycdot x)$, when $t$ tends to $\tau_\infty^g(x)$, by a diagonal extraction procedure we deduce \eqref{eqContTC}.
\item If $(g,x)\in B_2$, it suffices to apply the first part of Lemma \ref{lmContTC} to $t:=\xi(x)$ and $A^g_t(x)=\xi(g\mycdot x)$.
\item If $(g,x)\in B_3$, let $t<\tau_\infty^g(x)$ be. Then, by the third part of Lemma \ref{lmContTC} there exists $\lambda^k\in\Lambda$ such that, for any $v\geq 0$, for $k$ large enough, $\lambda^k_{A^g_t(x)+v}<\xi(g_k\mycdot x^k)$  and
\[
\|\log\dot{\lambda}^k\|\cv{k\to \infty}0,\quad
\limsup_{k\to\infty}\sup_{w\leq A^g_t(x)+v}d(g\mycdot x_w,g_k\mycdot x^k_{\lambda^k_w})\leq 2d\left(x_{\tau_\infty^g(x)},\{x_s\}_{t\leq s\leq \tau_\infty^g(x)}\right).
\]
Since $x$ is continuous at $\tau_\infty^g(x)$, we conclude by a diagonal extraction procedure, by letting $t$ tends to $\tau_\infty^g(x)$ and $v\to\infty$.
\item If $(g,x)\in B_4$, let $t=\tau_\infty^g(x)$ be. By the second part of Lemma \ref{lmContTC} there exists $\lambda^k\in\Lambda$ such that, for any $v\geq 0$, for  $k$ large enough $\lambda^k_{A^g_t(x)+v}<\xi(g_k\mycdot x^k)$, and
\[
\|\log\dot{\lambda}^k\|\cv{k\to \infty}0,\quad \sup_{w\leq A^g_t(x)+v}d(g\mycdot x_w,g_k\mycdot x^k_{\lambda^k_w})\cv{k\to\infty}0.
\]
We conclude by a diagonal extraction procedure and letting $v\to\infty$.
\end{itemize}
\end{proof}

\begin{proof}[Proof of Lemma \ref{lmContTC}]
Let $\widetilde{\lambda}^k\in\Lambda$ be as in Theorem \ref{thmLocSkoTop} and to simplify notations define, for $s\geq 0$
\begin{align*}
&\tau_s:=\tau^g_s(x),&A_s:=A_s^g(x),\\
&\tau^k_s:=\tau^{g_k}_s(x^k),&A^k_s:=A^{g_k}_s(x^k),
\end{align*}
and $u:=A_t$. Since $\tau_u=t\leq \xi(x)$ and $\{x_s\}_{s<t}\Subset S$
we have, for $k$ large enough $\widetilde{\lambda}^k_t\leq \xi(x^k)$, and  $\|\log\dot{\widetilde{\lambda}}^k\|_t\cv{}0$,  $\sup_{s<t}d(x_s,x^k_{\widetilde{\lambda}^k_s})\cv{}0$ and $x^k_{\widetilde{\lambda}^k_t}\cv{}x_t$, as $k\to\infty$.
Since $\{x_s\}_{s<t}\Subset\{g\not =0\}$, we deduce that for $k$ large enough $\{x^k_s\}_{s< \widetilde{\lambda}^k_t}\Subset\{g_k\not =0\}$.
Define then $\lambda^k\in\Lambda$ by
\[\left\{\begin{array}{ll}
\displaystyle \lambda^k_v:=A^k_{\widetilde{\lambda}^k_{\tau_v}}=\int_0^v\frac{g(x_{\tau_w})}{g_k(x^k_{\widetilde{\lambda}^k_{\tau_w}})}\dot{\widetilde{\lambda}}^k_{\tau_w}\d w & \text{if }v\leq u,\\
\displaystyle \dot{\lambda}^k_v =1 & \text{if }v> u.\\
\end{array}\right.\]
Since $\widetilde{\lambda}^k_t\leq\tau^k_\infty$ we have
\[
\lambda^k_u\leq A^k_{\tau^k_\infty}\leq\xi(g_k\mycdot x^k),
\]
now we obtain
\begin{align*}
&\sup_{v< u}d(g\mycdot x_{v},g_k\mycdot x^k_{\lambda^k_v})=\sup_{v<u}d(x_{\tau_v},x^k_{\widetilde{\lambda}^k_{\tau_v}})=\sup_{s<t}d(x_s,x^k_{\widetilde{\lambda}^k_s})\cv{k\to \infty}0,\\
&g_k\mycdot x^k_{\lambda^k_u}=x^k_{\widetilde{\lambda}^k_t}\cv{k\to\infty} x_t=g\mycdot x_u\\
&\|\log\dot{\lambda}^k\|=\esssup_{v\leq u}\left|\log\frac{\dot{\widetilde{\lambda}}^k_{\tau_v}g(x_{\tau_v})}{g_k(x^k_{\widetilde{\lambda}^k_{\tau_v}})}\right|=\esssup_{s\leq \tau_u}\left|\log\frac{\dot{\widetilde{\lambda}}^k_sg(x_s)}{g_k(x^k_{\widetilde{\lambda}^k_s})}\right|\cv{k\to \infty}0.
\end{align*}
For the second part of the proposition we keep the same construction as previously. 
For any $v\geq 0$ we have that
\[
\tau^k_{\lambda^k_{u+v}}=\inf\left\{t\geq \lambda^k_u\mid \int_{\lambda^k_u}^t\frac{\d s}{g_k(x^k_s)}\geq v\right\}\wedge\tau^k_\infty.
\]
Using Fatou's lemma
\[
\liminf_{k\to\infty}\int_{\lambda^k_u}^{(\widetilde{\lambda}^k_{\tau_\infty})^+}\frac{\d s}{g_k(x^k_s)}
= \liminf_{k\to\infty}\int_t^{\tau_\infty+}\frac{\dot{\widetilde{\lambda}}^k_{s}\d s}{g_k(x^k_{\lambda^k_s})}
\geq \int_t^{\tau_\infty+}\frac{\d s}{g(x_s)} = \infty,
\]
so $\limsup_{k\to\infty}\tau^k_{\lambda^k_{u+v}}-\widetilde{\lambda}^k_{\tau_\infty}\leq 0$.
Moreover, for $k$ large enough, $\tau^k_{\lambda^k_{u+v}}\geq\tau^k_{\lambda^k_u}=\widetilde{\lambda}^k_t$, so $\lambda^k_{u+v}<\xi(g_k\mycdot x^k)$  and
\[
\limsup_{k\to\infty}\sup_{u\leq w\leq u+v}d\left(g_k\mycdot x^k_{\lambda^k_w},\{x_s\}_{t\leq s\leq \tau_\infty}\right)=0.
\]
\end{proof}

\subsection{Connection between local and global Skorokhod topologies}
Generally to take into account the explosion, one considers processes in $\D(S^\Delta)$,
the set of \cadlag processes described in Definition \ref{defCADLAG}, associated to the space $S^\Delta$, and endowed with the global Skorokhod topology (see Corollary \ref{corGlobSkoTop}).
More precisely, the set of \cadlag paths with values in $S^\Delta$ is given by 
\begin{equation*}
\D(S^\Delta)=\left\{x\in(S^\Delta)^{\R_+}\mid
\begin{array}{l}\forall t\geq 0,\;x_t=\lim_{s\downarrow t}x_s,\;\mbox{ and }\; \\
\forall t>0,\;x_{t-}:=\lim_{s\uparrow t}x_s~\text{ exists in  }S^\Delta\end{array}\right\}\,.
\end{equation*}
A sequence $(x^k)_k$ in $\D(S^\Delta)$ converges to $x$ for the global Skorokhod topology if and only if there exists a sequence $(\lambda^k)_k$ of increasing homeomorphisms on $\R_+$ such that
\[
\forall t\geq 0,\quad\lim_{k\to\infty}\sup_{s\leq t}d(x_s,x^k_{\lambda^k_s})=0
\quad
\mbox{and}\quad
\lim_{k\to\infty}\|\lambda^k -{\rm id}\|_t=0.
\]
The  global Skorokhod topology does not depend on the arbitrary  metric $d$ on $S^\Delta$, but only on the topology on $S$.

In this section we give the connection between $\D(S^\Delta)$ with the global Skorokhod topology and $\Dloc(S)$ with the local Skorokhod topology.

We first identify these two measurable subspaces
\begin{align*}
\Dloc(S)\cap\D(S^\Delta)
&=\left\{x\in\Dloc(S)\mid0<\xi(x)<\infty\Rightarrow x_{\xi(x)-}\text{ exist in }S^\Delta\right\}\\
&=\left\{x\in\D(S^\Delta)\mid\forall t\geq\tau^{S},~x_t=\Delta\right\}.
\end{align*}
We can summarise our trajectories spaces by
\[\begin{array}{ccc}
\D(S)\subset &\Dloc(S)\cap\D(S^\Delta)&\subset\Dloc(S)\subset\Dexp(S).\\
&\cap\\
&\D(S^\Delta)
\end{array}\]
Hence $\Dloc(S)\cap\D(S^\Delta)$ will be endowed with two topologies, the local topology from $\Dloc(S)$ and the global topology from $\D(S^\Delta)$. 
\begin{remark}\label{rkGloImpLoc}
1) On $\Dloc(S)\cap\D(S^\Delta)$ the trace topology from $\Dloc(S)$ is weaker than the trace topology from $\D(S^\Delta)$. Eventually, these two topologies coincide on $\D(S)$.
Indeed this is clear using a metric $d$ on $S^\Delta$ and the characterisations of topologies given in Theorem \ref{thmLocSkoTop} and Corollary \ref{corGlobSkoTop}. The result 
in Corollary \ref{corConnection} below  is a converse sentence of the present remark.\\
2) If $x\in \Dloc(S)\cap\D(S^\Delta)$ then $g\mycdot x$ is well defined in $\Dloc(S)\cap\D(S^\Delta)$ for  
\[
g\in \rmC_b(S,\R_+^*)\subset\big\{g\in\rmC^{\not = 0}(S^\Delta,\R_+)\,|\, g(\Delta)=0\big\}.
\]
We deduce from Theorem \ref{thmContTC} and the third point of Remark \ref{rkContTC} that the mapping
\[\begin{array}{ccc}
\rmC_b(S,\R_+^*)\times\Dloc(S)\cap\D(S^\Delta)&\rightarrow & \Dloc(S)\cap\D(S^\Delta)\\
(g,x)&\mapsto & g\mycdot x
\end{array}\]
is continuous between the topology of the uniform convergence and the global Skorokhod topology.
\end{remark}

The following result is stated in a very general form because it will be useful when 
studying, for instance,  the martingale problems.

\begin{proposition}[Connection  between $\Dloc(S)$ and $\D(S^\Delta)$]\label{propEqLocGlo}
Let $\widetilde{S}$ be an arbitrary locally compact Hausdorff space with countable base and consider
\[\begin{array}{cccc}
\Pbf:&\widetilde{S}&\to&\Pcal(\Dloc(S))\\
&a&\mapsto&\Pbf_a
\end{array}\]
a weakly continuous mapping for the local Skorokhod topology. Then for any open subset $U$ of $S$, there exists $g\in\rmC(S,\R_+)$ such that $\{g\not =0\}=U$, for all $a\in\widetilde{S}$
\[
g\mycdot\Pbf_a\left(0<\xi<\infty\Rightarrow X_{\xi-}\text{ exists in }U\right)=1,
\]
and the application
\[\begin{array}{cccc}
g\mycdot\Pbf:&\widetilde{S}&\to&\Pcal(\{0<\xi<\infty\Rightarrow X_{\xi-}\text{ exists in }U\})\\
&a&\mapsto&g\mycdot\Pbf_a
\end{array}\]
is weakly continuous for the global Skorokhod topology from $\D(S^\Delta)$.
\end{proposition}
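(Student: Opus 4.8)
The plan is to build a single function $g\in\rmC(S,\R_+^*)$ that decays fast enough near the boundary $\partial U$ (and near infinity in $S$) so that, after the time change, every path either never leaves $U$ in finite time or, if it does, it does so only at an infinite amount of new time — thereby converting a path whose explosion is ``caused'' by exiting $U$ into a path that genuinely runs to time $+\infty$ inside $U$. First I would fix a metric $d$ on $S^\Delta$. The natural choice is to take a continuous $\varphi\colon S\to\R_+^*$ with $\varphi(y)\to 0$ as $y\to\partial U$ or $y\to\Delta$ (e.g.\ $\varphi(y):=d(y,S^\Delta\setminus U)$ when $U\neq S$, suitably modified to stay positive and continuous on all of $S$), and then set $g:=\theta\circ\varphi$ for a well-chosen increasing $\theta\colon\R_+\to\R_+^*$. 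With $g$ of the form $\theta\circ\varphi$ one has $\{g\neq0\}=\{g>0\}=S=U$ when $U=S$, and $\{g\neq 0\}=U$ after extending by $0$ on $S^\Delta\setminus U$; in all cases $g\in\widetilde{\rmC}^{\not=0}(S,\R_+)$ since $g$ is bounded on compacts (indeed bounded if $\theta$ is).

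The second step is the almost-sure statement. I would show that for a path $x\in\Dloc(S)$, the hypothesis $\tau_\infty^g(x)<\xi(x)$ forces $x$ to approach $\partial U$ (or $\Delta$) along a sequence, hence $\varphi(x_s)\to 0$, hence $1/g(x_s)=1/\theta(\varphi(x_s))\to\infty$; choosing $\theta$ decaying slowly enough near $0$ (for instance so that $\int_{0+}\!\d r/\theta(r)$ diverges appropriately, or simply $\theta(r)=r$ combined with the fact that $x$ spends ``enough time'' near the boundary) makes $\int_0^{\tau_\infty^g(x)+}\d s/g(x_s)=\infty$, which is exactly \eqref{eqB1}; and when $\tau_\infty^g(x)=\xi(x)$ with a left limit in $S$, that left limit lies in $\{g=0\}$ only if it is on $\partial U$, a measure-zero-time situation one rules out, giving \eqref{eqB2}. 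Here one must be a little careful: the divergence of $\int 1/g$ is not automatic from $\varphi(x_s)\to0$ along a sequence (the path could oscillate), so the real content is to choose $g$ adapted to the \emph{modulus of continuity} of paths; since $x$ is \cadlag and $\Dloc$, near an exit from $U$ it is confined to a compact of $S$ (otherwise $\tau_\infty^g=\tau^{\{g\neq0\}}<\xi$ already pushes us into $B_1$) and its oscillation is controlled by $\omega'$, so one gets a genuine lower bound on $\int 1/g$. Then $g\mycdot\Pbf_a$ is supported on $B$ by Remark \ref{rkContTC}(3), and on the event of the claim one checks $\xi(g\mycdot x)$ is either $\infty$ or realised with $X_{\xi-}\in U$: this is the content of the explosion-time formula in the Remark following the definition of time change.

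The third step is the continuity. Fix $g$ as above; the map $x\mapsto (g,x)$ is continuous into $\widetilde{\rmC}^{\not=0}(S,\R_+)\times\Dloc(S)$, and by Theorem \ref{thmContTC} the time change is continuous on $B$. By step two, for every $a\in\widetilde S$ the measure $\Pbf_a$ gives full mass to $\{x:(g,x)\in B\}$ (this set is the continuity set up to a $\Pbf_a$-null set), so $x\mapsto g\mycdot x$ is $\Pbf_a$-a.s.\ continuous; combined with $a\mapsto\Pbf_a$ weakly continuous for $\Dloc(S)$, the mapping theorem for weak convergence gives $a\mapsto g\mycdot\Pbf_a$ weakly continuous for $\Dloc(S)$. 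To upgrade this to the \emph{global} Skorokhod topology on $\D(S^\Delta)$, I would invoke Remark \ref{rkGloImpLoc}(1): on $\Dloc(S)\cap\D(S^\Delta)$ the global topology is finer than the local one, so a priori this is the wrong direction — the key point is that, because $g$ is bounded, $g\mycdot x\in\D(S)$ whenever $x\in\D(S)$ (Proposition \ref{propTC}(3)), and more generally $g\mycdot x\in\Dloc(S)\cap\D(S^\Delta)$ with $x_t=\Delta$ for $t\geq\tau^S$; on the relevant subset of paths one shows the two topologies actually agree, or directly that the $\lambda^k$ produced by Lemma \ref{lmContTC} control $\sup_{s\le t}d(\cdot,\cdot)$ and $\|\lambda^k-\mathrm{id}\|_t$ uniformly up to \emph{any} finite time (not just up to $\xi$), which is precisely the global criterion. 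This last point — getting the convergence to hold up to arbitrary finite times including past the explosion, using that $g\mycdot x$ is frozen at an $S^\Delta$-value there — is the main obstacle and is where the boundedness of $g$ and the second conclusion of Lemma \ref{lmContTC} do the work; once that is in place, the mapping theorem applies verbatim with $\D(S^\Delta)$ in place of $\Dloc(S)$.
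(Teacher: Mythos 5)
Your plan founders on the construction of $g$: you propose a single, universal $g=\theta\circ\varphi$ depending only on $U$ (and a metric), and you try to secure condition \eqref{eqB1} by appealing to the modulus of continuity of individual paths. But $\omega^\prime_{t,K,x}(\delta)\to 0$ at a rate that depends on $x$, and it is \emph{not} uniform over $\Dloc(S)$. For any fixed $g$ with $\{g\neq0\}=U$ one can build a path $x\in\Dloc(S)$ that reaches $\partial U$ so quickly (spending time $2^{-n^2}$, say, in the region where $g\approx g_n$) that $\int_0^{\tau^g_\infty(x)+}\d s/g(x_s)<\infty$; for the Dirac mass at such a path the almost-sure claim fails. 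So no choice of $g$ made independently of the family $(\Pbf_a)$ can work, and the sentence ``its oscillation is controlled by $\omega'$, so one gets a genuine lower bound on $\int 1/g$'' is exactly where the argument breaks: that lower bound requires a \emph{uniform} modulus, which is available only on compact subsets of $\Dloc(S)$ (Theorem \ref{thmComp}).

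This is why the paper's proof is organised around Lemma \ref{lmEqLocGlo}, stated for a \emph{compact} $D\subset\Dloc(S)$: there the estimate $\sup_{x\in D}\omega^\prime_{2^n,B(\Delta,2^{-n-2})^c,x}(\eta_n)<2^{-n-2}$ lets one choose $g$ with $g_{|K_n^c}\leq 2^{-n}\eta_n$ and force the integral to diverge for every $x\in D$. The missing steps in your proposal are then: (a) use the weak continuity of $a\mapsto\Pbf_a$ and an exhaustion $\widetilde{K}_n$ of $\widetilde{S}$ to get tightness and compacts $D_n$ with $\sup_{a\in\widetilde{K}_n}\Pbf_a(D_n^c)\leq 1/n$; (b) build $g_n$ for each $D_n$ and glue them into one $g$ using the monotonicity clause of the lemma ($h\leq Cg$ inherits the properties); (c) the a.s. statement then follows from $\Pbf_a(\bigcup_n D_n)=1$. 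Your third step also needs repair along the same lines: the continuous mapping theorem gives weak continuity only for the local topology, and the upgrade to the global topology of $\D(S^\Delta)$ rests on part iii) of the lemma (the two trace topologies coincide on $\{g\mycdot x: x\in D_n\}$) together with a Portmanteau argument on closed sets with the $1/n$ error — and that coincidence of topologies is again proved via the uniform modulus on $D_n$, so it too cannot be obtained for a universal $g$. Your intuition about what $g$ must do near $\partial U$ is correct, but the compact-by-compact reduction is not an optional refinement; it is the load-bearing part of the proof.
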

Before giving the proof of Proposition \ref{propEqLocGlo} we point out a direct application: we take $\widetilde{S}:=\N\cup\{\infty\}$, $U=S$ and a sequence of Dirac probability measures $\Pbf_k=\delta_{x^k}$, $\Pbf_\infty=\delta_{x}$. Then we deduce from Proposition \ref{propEqLocGlo} the following:
\begin{corollary}[Another description of $\Dloc(S)$]\label{corConnection}
Let $x,x^1,x^2,\ldots\in\Dloc(S)$ be. Then the sequence $x^k$ converges to $x$ in $\Dloc(S)$, as $k\to\infty$, if and only if there exists $g\in\rmC(S,\R_+^*)$ such that $g\mycdot x, g\mycdot x^1, g\mycdot x^2,\ldots\in\D(S^\Delta)$, and $g\mycdot x^k$ converges to $g\mycdot x$ in $\D(S^\Delta)$, as $k\to\infty$.
\end{corollary}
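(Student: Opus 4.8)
The plan is to obtain the corollary as a direct consequence of Proposition \ref{propEqLocGlo} applied with a carefully chosen auxiliary space. First I would set $\widetilde{S}:=\N\cup\{\infty\}$, equipped with the usual one-point compactification topology, so that a sequence converging to $\infty$ in $\widetilde{S}$ is exactly a genuine convergent sequence in the ordinary sense. I would then define the map $\Pbf:\widetilde{S}\to\Pcal(\Dloc(S))$ by $\Pbf_k:=\delta_{x^k}$ for $k\in\N$ and $\Pbf_\infty:=\delta_x$. The hypothesis that $x^k\to x$ in $\Dloc(S)$ is precisely the statement that this map is weakly continuous for the local Skorokhod topology (weak convergence of Dirac masses is convergence of the atoms). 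I would take $U:=S$ in Proposition \ref{propEqLocGlo}.

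Next I would invoke Proposition \ref{propEqLocGlo}: it produces $g\in\rmC(S,\R_+)$ with $\{g\neq 0\}=U=S$, i.e.\ $g\in\rmC(S,\R_+^*)$, such that each $g\mycdot\Pbf_a$ is carried by $\{0<\xi<\infty\Rightarrow X_{\xi-}\text{ exists in }S\}$ and such that $a\mapsto g\mycdot\Pbf_a$ is weakly continuous for the global Skorokhod topology on $\D(S^\Delta)$. Since $g\mycdot\Pbf_k=\delta_{g\mycdot x^k}$ and $g\mycdot\Pbf_\infty=\delta_{g\mycdot x}$, the statement that $g\mycdot x^k$ is carried by that set says exactly that $g\mycdot x^k\in\Dloc(S)\cap\D(S^\Delta)$; by the identification of $\Dloc(S)\cap\D(S^\Delta)$ recalled just before Remark \ref{rkGloImpLoc}, and since $g\in\rmC(S,\R_+^*)$ forces $\tau^S=\infty$ for paths that never explode — more carefully, by Proposition \ref{propTC}(4) together with the fact that $g$ is bounded on the compact closures of the traces of $x,x^k$ — one deduces $g\mycdot x,g\mycdot x^1,g\mycdot x^2,\dots\in\D(S^\Delta)$. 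Weak continuity of $a\mapsto g\mycdot\Pbf_a$ at $a=\infty$ is then just the assertion $g\mycdot x^k\to g\mycdot x$ in $\D(S^\Delta)$, which is the ``only if'' direction.

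For the converse, suppose $g\in\rmC(S,\R_+^*)$ is such that $g\mycdot x, g\mycdot x^k\in\D(S^\Delta)$ and $g\mycdot x^k\to g\mycdot x$ in $\D(S^\Delta)$. By Remark \ref{rkGloImpLoc}(1), the trace topology from $\Dloc(S)$ on $\Dloc(S)\cap\D(S^\Delta)$ is weaker than that from $\D(S^\Delta)$, so $g\mycdot x^k\to g\mycdot x$ in $\Dloc(S)$ as well. Now I apply the time change by $h:=1/g\in\rmC(S,\R_+^*)$: by Proposition \ref{propTC}(2) we have $h\mycdot(g\mycdot x)=(hg)\mycdot x=x$ and similarly for $x^k$, and by Theorem \ref{thmContTC} the map $(f,y)\mapsto f\mycdot y$ is continuous on $\rmC(S,\R_+^*)\times\Dloc(S)$; applying it with the constant sequence $h$ and the convergent sequence $g\mycdot x^k$ yields $x^k=h\mycdot(g\mycdot x^k)\to h\mycdot(g\mycdot x)=x$ in $\Dloc(S)$.

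The main obstacle I anticipate is the bookkeeping in the forward direction: verifying that the measure-theoretic conclusion of Proposition \ref{propEqLocGlo} (a $g\mycdot\Pbf_a$ concentrated on a certain event) translates, for Dirac masses, into the pathwise statement $g\mycdot x^k\in\D(S^\Delta)$ — i.e.\ that the chosen $g$, being everywhere nonzero, genuinely removes explosion from non-exploding-to-the-boundary behaviour — and keeping straight the chain of identifications $\Dloc(S)\cap\D(S^\Delta)=\{x\in\D(S^\Delta)\mid\forall t\geq\tau^S,\ x_t=\Delta\}$. Once $g\in\rmC(S,\R_+^*)$ is in hand, the reverse direction is essentially formal, relying only on the cocycle identity $h\mycdot(g\mycdot\,\cdot\,)=(hg)\mycdot\,\cdot\,$ and the continuity theorem.
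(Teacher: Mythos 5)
Your proposal is correct and matches the paper's approach: the forward direction is exactly the paper's stated deduction of the corollary from Proposition \ref{propEqLocGlo} with $\widetilde{S}=\N\cup\{\infty\}$, $U=S$ and Dirac masses $\Pbf_k=\delta_{x^k}$, $\Pbf_\infty=\delta_x$. The converse, which the paper leaves implicit, is correctly supplied by your combination of Remark \ref{rkGloImpLoc}, the cocycle identity $(1/g)\mycdot(g\mycdot x)=x$ from Proposition \ref{propTC}, and the continuity of the time change on $\rmC(S,\R_+^*)\times\Dloc(S)$ from Theorem \ref{thmContTC}.
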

We proceed with the proof of Proposition \ref{propEqLocGlo} and, firstly we state a important result which will be our main tool:
\begin{lemma}\label{lmEqLocGlo}
Let $D$ be a compact subset of $\Dloc(S)$ and $U$ be an open subset of $S$. There exists $g\in\rmC(S,\R_+)$ such that:
\begin{enumerate}
\item[i)] $\{g\not =0\}=U$,
\item[ii)] for all $x\in D$, $(g,x)$ is in the set $B$ given by \eqref{eqB1}-\eqref{eqB2} in Theorem \ref{thmContTC} and 
\[g\mycdot x\in\big\{0<\xi<\infty\Rightarrow X_{\xi-}\text{ exists in }U\big\}.\]
\item[iii)] the trace topologies of $\Dloc(S)$ and $\D(S^\Delta)$ coincide on $\{g\mycdot x \,|\,x\in D\}$.
\end{enumerate}
Furthemore, if $g\in\rmC(S,\R_+)$ satisfies i)-iii) and if  $h\in\rmC(S,\R_+)$ is such that $\{h\not =0\}=U$ and $h\leq Cg$ with a non-negative constant $C$, then $h$ also satisfies
i)-iii).
\end{lemma}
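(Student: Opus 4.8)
Here is how I would go about it.

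\textbf{Construction of $g$.} The plan is to build $g$ so that the time change it induces slows the motion down drastically whenever the path approaches the topological boundary of $U$ inside $S^\Delta$, the rate of the slow‑down being calibrated against the uniform modulus of continuity furnished by the compactness of $D$. Fix a metric $\widetilde{d}$ on $S^\Delta$, put $F:=S^\Delta\setminus U$ (closed), and choose an exhaustion $U_1\Subset U_2\Subset\cdots$ of $U$ by relatively compact open subsets with $\bigcup_nU_n=U$. By Theorem~\ref{thmComp}, for each $n$ one may pick $\delta_n\in(0,2^{-n})$ with $\sup_{x\in D}\omega^\prime_{n,\overline{U_n},x}(\delta_n)$ smaller than, say, a third of $\widetilde{d}(\overline{U_{n-1}},S^\Delta\setminus U_n)$; one then picks a sequence $\eps_n\downarrow 0$ tending to $0$ fast enough in terms of the $\delta_n$ and of these gaps, and lets $g\in\rmC(S,\R_+)$ be any continuous function with $\{g\neq 0\}=U$, $g\leq 1$, and $g\leq\eps_n$ on $S\setminus U_n$ (such a $g$ is produced by the usual Urysohn/partition‑of‑unity construction on the locally compact space $U$). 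Then (i) holds by construction, and $g\in\rmC(S,\R_+)\subset\widetilde{\rmC}^{\not = 0}(S,\R_+)$, so the time change $g\mycdot x$ is available for $x\in\Dloc(S)$.

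\textbf{Proof of (ii).} The core estimate is that, for every $x\in D$, the additive functional $A^g_t(x)=\int_0^t\d u/g(x_u)$ diverges as soon as the trajectory comes arbitrarily close to $F$; this is precisely where the decay of $g$ and the calibration through $\delta_n$ enter, the uniform bound on $\omega^\prime$ controlling how many times a path in $D$ can cross the collars $\overline{U_{n+1}}\setminus U_{n-1}$ before any fixed time. Granting this, fix $x\in D$ and recall $\tau^g_\infty(x)=\tau^U(x)$. If $\tau^U(x)<\xi(x)$, then near $\tau^U(x)$ the path is forced toward $F$ (either $x_{\tau^U(x)-}$ lies in $S\setminus U$, or $x$ jumps out of $U$ at $\tau^U(x)$), and in every case the decay of $g$ gives $\int_0^{\tau^U(x)^+}\d u/g(x_u)=\infty$, which is \eqref{eqB1}; moreover, if $x_{\tau^U(x)-}$ exists in $S$ with $g(x_{\tau^U(x)-})=0$, the same mechanism yields $A^g_{\tau^U(x)}(x)=\infty$, so the hypothesis of \eqref{eqB2} is never met — hence $(g,x)\in B$. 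The same divergence shows that $\xi(g\mycdot x)<\infty$ can occur only when $x$ stays in $U$ up to an explosion time $\xi(x)<\infty$ with $x_{\xi(x)-}$ existing in $U$ (the alternative $\xi(x)=\infty$ being ruled out by $g\leq 1$, as in the third part of Proposition~\ref{propTC}), and then $(g\mycdot x)_{\xi(g\mycdot x)-}=x_{\xi(x)-}\in U$; thus $g\mycdot x\in\{0<\xi<\infty\Rightarrow X_{\xi-}\text{ exists in }U\}$, proving (ii).

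\textbf{Proof of (iii).} Set $E:=\{g\mycdot x\mid x\in D\}$. Since $(g,x)\in B$ for every $x\in D$, the map $x\mapsto g\mycdot x$ is continuous on $D$ by Theorem~\ref{thmContTC}, so $E$ is compact for the local Skorokhod topology, and $E\subset\Dloc(S)\cap\D(S^\Delta)$ by (ii). I would then show that $x\mapsto g\mycdot x$ is in fact continuous as a map into $\D(S^\Delta)$ with its global topology; the two ingredients are relative compactness of $E$ in $\D(S^\Delta)$ — obtained from the compactness criterion of Theorem~\ref{thmComp} applied to the compact space $S^\Delta$, by estimating $\sup_{x\in D}\omega^\prime_{t,S^\Delta,g\mycdot x}(\delta)$ (small because where $g$ is bounded below $g\mycdot x$ is a tame reparametrisation of $x\in D$, and where $g$ is small the divergence of $A^g$ established above keeps $g\mycdot x$ within a small $\widetilde{d}$‑neighbourhood of $F$ over long time windows) — and the identification of any global‑subsequential limit of a sequence $g\mycdot x^k$ with $g\mycdot\lim x^k$, which follows from the explicit form of the time‑changed paths together with the divergence of $A^g$, using that on $\Dloc(S)\cap\D(S^\Delta)$ the global topology is finer than the local one (first part of Remark~\ref{rkGloImpLoc}) while $\Dloc(S)$ is Hausdorff. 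Granting this, $E$ is compact for the global topology; as the global topology on $E$ is finer than the local one and the local topology on $E$ is Hausdorff, the identity $E\to E$ is a continuous bijection from a compact space onto a Hausdorff space, hence a homeomorphism — which is (iii).

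\textbf{The monotonicity clause, and the main obstacle.} If $g$ satisfies (i)--(iii) and $h\in\rmC(S,\R_+)$ has $\{h\neq 0\}=U$ and $h\leq Cg$, then (i) is immediate, $(h,x)\in B$ for every $x\in D$ by the second part of Remark~\ref{rkContTC} (same zero set, domination), and $\int_I\d u/h(x_u)\geq C^{-1}\int_I\d u/g(x_u)$ on every interval $I$ shows that the divergence statements — hence the rest of (ii) — carry over; for (iii) one re‑runs the previous paragraph with $h$, noting that every estimate on the $h$‑time change differs from the corresponding one for $g$ by at most the multiplicative constant $C$, so that $\{h\mycdot x\mid x\in D\}$ is again relatively compact in $\D(S^\Delta)$. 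The main obstacle throughout is the divergence estimate for $A^g$ near $F$ and its companion bound on $\omega^\prime_{t,S^\Delta,g\mycdot x}$: matching the decay of $g$ near $F$ with the crossing bound extracted from Theorem~\ref{thmComp}, uniformly over the compact family $D$, is the analogue here of the deterministic modulus‑of‑continuity estimate one would use for explosions of ODEs; the remaining, purely topological, steps are soft.
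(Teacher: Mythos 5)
Your construction of $g$ and your treatment of ii) follow the paper's route: the paper takes $K_n:=\{a\in S\mid d(a,S^\Delta\setminus U)\geq 2^{-n}\}$, extracts $\eta_n$ from the uniform bound $\sup_{x\in D}\omega^\prime_{2^n,B(\Delta,2^{-n-2})^c,x}(\eta_n)<2^{-n-2}$ given by Theorem \ref{thmComp}, and imposes $g\leq 2^{-n}\eta_n$ off $K_n$, so that each visit of the path to a shell near $S^\Delta\setminus U$ lasts at least $\eta_n$ and contributes at least $2^n$ to $A^g$. This is exactly the calibration you describe, so the mechanism is right; but you never execute it (``Granting this\dots''), and since you yourself identify it as the main obstacle, the proof as written is a plan rather than a proof of ii). For iii) you take a genuinely different, and in principle valid, route: compactness of $\{g\mycdot x\mid x\in D\}$ for the global topology of $\D(S^\Delta)$ followed by the continuous-bijection-onto-Hausdorff argument, whereas the paper argues sequentially that local convergence $g\mycdot x^k\to g\mycdot x$ with $\xi(g\mycdot x)<\infty$ upgrades to global convergence, using that after exiting $K_{n+1}$ the time-changed path stays within $2^{-n-2}$ of its exit value for a duration at least $2^n$. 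Both versions rest on the same quantitative estimate, which again you only sketch.

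The genuine gap is in the final monotonicity clause. There you propose to ``re-run the previous paragraph with $h$, noting that every estimate on the $h$-time change differs \dots by at most the multiplicative constant $C$.'' But the clause is stated for an \emph{arbitrary} $g\in\rmC(S,\R_+)$ satisfying i)--iii), not for the specific $g$ you constructed; for such a $g$ there are no quantitative decay estimates ($g\leq 2^{-n}\eta_n$ off $K_n$, divergence rates of $A^g$, bounds on $\omega^\prime_{t,S^\Delta,g\mycdot x}$) to rescale by $C$ — the only available information is the qualitative statements i)--iii) and membership of $(g,x)$ in $B$. Your derivation of $(h,x)\in B$ from Remark \ref{rkContTC} and of the rest of ii) from $A^h\geq C^{-1}A^g$ is fine, but your transfer of iii) is not. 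The paper closes this by a soft argument you do not have: writing $h\mycdot x=\frac{h}{g}\mycdot(g\mycdot x)$ with $\frac{h}{g}\in\rmC_b(U,\R_+^*)$, invoking \eqref{eqRkContTC} for $S$ and for $S^\Delta$ to get that $y\mapsto\frac{h}{g}\mycdot y$ is continuous from $\{g\mycdot x\mid x\in D\}$ onto $\{h\mycdot x\mid x\in D\}$ for both topologies, and then using compactness of the source to conclude bi-continuity — incidentally the same compact-to-Hausdorff device you use in iii). You would need to replace the ``re-run'' by this (or an equivalent) argument for the clause to hold in the generality in which it is stated and in which Proposition \ref{propEqLocGlo} uses it.
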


\begin{proof}[Proof of Proposition \ref{propEqLocGlo}]
Let $(\widetilde{K}_n)_{n\in\N^*}$ be an increasing sequence of compact subset of $\widetilde{S}$ such that $\widetilde{S}=\bigcup_n\widetilde{K}_n$, then $\{\Pbf_a\}_{a\in \widetilde{K}_n}$ is tight, for all $n\in\N^*$. So, there exist subsets $D_n\subset\Dloc(S)$ which are compacts for the topology of $\Dloc(S)$, and such that
\[
\sup_{a\in \widetilde{K}_n}\Pbf_a(D_n^c)\leq\frac{1}{n}.
\]
For any $n\in\N^*$, let $g_n$ be satisfying i)-iii) of Lemma \ref{lmEqLocGlo} associated to the
compact set $D_n$. It is no difficult to see that there exists $g\in\rmC(S,\R_+)$ such that $\{g\not =0\}=U$ and for all $n\in\N^*$, $g\leq C_ng_n$ for non-negative constants  $C_n$.  Hence $g$ satisfies i)-iii) for all $D_n$, $n\in\N^*$. Hence for all $a\in\widetilde{S}$
\[
g\mycdot\Pbf_a\left(0<\xi<\infty\Rightarrow X_{\xi-}\text{ exists in }U\right)\geq\Pbf_a\Big(\bigcup_{n\in\N^*}D_n\Big)=1.
\]
Let $a_k,a\in\widetilde{S}$ such that $a_k\cv{k\to\infty}a$. For $n$ large enough $\{a_k\}_k\subset \widetilde{K}_n$. Then if $F$ is a subset of $\left\{0<\xi<\infty\Rightarrow X_{\xi-}\text{ exists in }U\right\}$ which is closed  for the topology of $\D(S^\Delta)$, then
\begin{align*}
&\limsup_{k\to\infty}g\mycdot\Pbf_{a_k}(F)-g\mycdot\Pbf_a(F)\\
&\hspace{2cm}\leq\limsup_{k\to\infty}\Pbf_{a_k}(X\in D_n,~g\mycdot X\in F)-\Pbf_a(X\in D_n,~g\mycdot X\in F)+\frac{1}{n}.
\end{align*}
But thanks to iii) in Lemma \ref{lmEqLocGlo}, $\left\{X\in D_n,~g\mycdot X\in F\right\}$ is a  subset of $\Dloc(S)$ which is closed  for the topology of $\Dloc(S)$. Hence by using the Portmanteau theorem (see for instance Theorem 2.1 from \cite{Bi99}, p. 16)
\[
\limsup_{k\to\infty}\Pbf_{a_k}(X\in D_n,~g\mycdot X\in F)\leq\Pbf_a(X\in D_n,~g\mycdot X\in F)
\] 
and so letting $n\to\infty$
\[
\limsup_{k\to\infty}g\mycdot\Pbf_{a_k}(F)\leq g\mycdot\Pbf_a(F).
\]
By using the Portmanteau theorem, the proof of the proposition is complete, except for the proof of Lemma \ref{lmEqLocGlo}.
\end{proof}

\begin{proof}[Proof of Lemma \ref{lmEqLocGlo}]
Let $d$ be a metric on $S^\Delta$ and denote 
\[K_n:=\left\{a\in S\mid d(a,S^\Delta\backslash U)\geq2^{-n}\right\}.\]
By using Theorem \ref{thmComp},there exists a sequence $(\eta_n)_n\in(0,1)^\N$ decreasing to $0$ such that
\begin{align}\label{eqLmEqLocGlo}
\sup_{x\in D}\omega^\prime_{2^n,B(\Delta,2^{-n-2})^c,x}(\eta_n)<2^{-n-2}.
\end{align}
Moreover, there exists $g\in\rmC(S^\Delta,[0,1])$ such that $\{g\not=0\}=U$ and
$g_{|K_n^c}\leq2^{-n}\eta_{n}$.\\
Let $x\in D$ be. We consider the following two situations:
\begin{itemize}
\item If $\tau^g_\infty(x)<\infty$ and $\{x_s\}_{s<\tau^g_\infty(x)}$ is not a compact of $U$, take $m\in \N$  such that $2^{m}\geq\tau^g_\infty(x)$, denote
\[
t:=\min\left\{s\geq 0\mid x_s\not\in\overset{\circ}{K}_{m+1}\right\}<\tau^g_\infty(x)
\]
and let $n\geq m$ be such that $x_t\in K_{n+2}\backslash \overset{\circ}{K}_{n+1}$. Using \eqref{eqLmEqLocGlo} there exist $t_1,t_2\in\R_+$ such that $t_1\leq t<t_2<\tau^g_\infty(x)$, $t_2-t_1>\eta_n$ and $x_s\not\in K_n$ for all $s\in[t_1,t_2)$. So 
\[
A^g_{\tau^g_\infty(x)}(x)\geq\int_{t_1}^{t_2}\frac{\d s}{g(x_s)}\geq 2^{m},
\] 
hence letting $m$ goes to infinity
\[
A^g_{\tau^g_\infty(x)}(x)=\infty.
\]
\item If $\tau^g_\infty(x)<\xi(x)$ and $g(x_{\tau^g_\infty(x)-})\not=0$, then $g(x_{\tau^g_\infty(x)})=0$.  Let $m\in \N$ be such that $2^{m}\geq\tau^g_\infty(x)$ and $\{x_s\}_{s\leq\tau^g_\infty(x)}\subset B(\Delta,2^{-m-2})^c$. Using \eqref{eqLmEqLocGlo}, there exist $t_1,t_2\in\R_+$ such that $t_1\leq \tau^g_\infty(x)<t_2<\xi(x)$, $t_2-t_1>\eta_m$ and $x_s\not\in K_{m}$ for all $s\in[t_1,t_2)$. So 
\[
\int_{0}^{\tau^g_\infty(x)+\eta_{m}}\frac{\d s}{g(x_s)}\geq 
\int_{t_1}^{t_1+\eta_{m}}\frac{\d s}{g(x_s)}\geq 
2^{m}
\]
 hence letting $m$ tend to infinity
\[
\int_{0}^{\tau^g_\infty(x)+}\frac{\d s}{g(x_s)}=\infty.
\]
\end{itemize}
Hence we obtain that $(g,x)\in B$ and $g\mycdot x\in\{0<\xi<\infty\Rightarrow X_{\xi-}\text{ exists in }U\}$ and ii) is verified.

We proceed by proving iii). Thanks to Remark \ref{rkGloImpLoc}, to get the equivalence of the topologies it is enough to prove that if $x^k,x\in D$ are such that $g\mycdot x^k\to g\mycdot x$ for the topology from $\Dloc(S)$ and $\xi(g\mycdot x)<\infty$, then the convergence also holds for the topology from $\D(S^\Delta)$. Let $\lambda^k\in\Lambda$ be such that
\[
\sup_{s\leq\xi(g\mycdot x)}d(g\mycdot x_s,g\mycdot x^k_{\lambda^k_s})\cv{}0,\hspace{1cm}\|\log\dot{\lambda}^k\|_{\xi(g\mycdot x)}\cv{}0,\quad
\mbox{ as }k\to\infty.
\]
We may suppose that $\dot{\lambda}^k_s=0$, for $s\geq \xi(g\mycdot x)$. Denote $t_k:=\lambda^k_{\xi(g\mycdot x)}$ and choose $m\in\N$ be such that $\{g\mycdot x_s\}_{s<\xi(g\mycdot x)}\Subset\overset{\circ}{K}_{m}$ and $\xi(g\mycdot x)<2^{m}$. Then, for $k$ large enough $\{g\mycdot x^k_s\}_{s<t_k}\Subset\overset{\circ}{K}_{m}$, $g\mycdot x^k_{t_k}\not \in K_{m+1}$ and $t_k<2^{m}$.
\begin{itemize}
\item Either $g\mycdot x^k_{t_k}\not\in U$ and so $g\mycdot x^k_{\lambda^k_s}=g\mycdot x^k_{t^k}$ for all $s\geq \xi(g\mycdot x)$.
\item Or $g\mycdot x^k_{t_k}\in U$ and let $n\geq m$ be such that $g\mycdot x^k_{t_k}=x_{\tau^g_{t_k}(x^k)}\in K_{n+2}\backslash\overset{\circ}{K}_{n+1}$. Using \eqref{eqLmEqLocGlo}, $d(x_s,x^k_{t_k})<2^{-n-2}$ and so $x_s\in U\backslash K_n$ for all $s\in[\tau^g_{t_k}(x^k),\tau^g_{t_k}(x^k)+\eta_n]$.
Hence $A^g_{\tau^g_{t_k}(x^k)+\eta_n}\geq t_k+2^n$, so $d(g\mycdot x_s,g\mycdot x^k_{t_k})<2^{-n-2}$ for all $s\in[t_k,t_k+2^{n}]$.
\end{itemize}
Hence we obtain that for $k$ large enough
\[
\sup_{s\leq\xi(g\mycdot x)+2^{m}}d(g\mycdot x_s,g\mycdot x^k_{\lambda^k_s})\leq \sup_{s\leq\xi(g\mycdot x)}d(g\mycdot x_s,g\mycdot x^k_{\lambda^k_s}) +2^{-m-2},
\]
so letting $m$ goes to the infinity we obtain that $g\mycdot x^k$ converge to $g\mycdot x$ for the global Skorokhod topology from $\D(S^\Delta)$. Hence the proof of iii) is done.

Finally, to prove the last part of the lemma let $g\in\rmC(S,\R_+)$ be such that  i)-iii) are satisfied and let $h\in\rmC(S,\R_+)$ be such that $\{h\not =0\}=U$ and $h\leq Cg$ with a non-negative constant $C$. Thanks to Remark \ref{rkContTC}, $(h,x)$ belongs to the set $B$ given by \eqref{eqB1}-\eqref{eqB2}, it is also clear that $h\mycdot x\in \{0<\xi<\infty\Rightarrow X_{\xi-}\text{ exists in }U\}$. We have that $\frac{h}{g}\in\rmC_b(U,\R_+^*)$, so using \eqref{eqRkContTC} for $S$ and $S^\Delta$, the bijection
\[\begin{array}{ccc}
\{g\mycdot x\,|\,x\in D\}&\to&\{h\mycdot x\,|\,x\in D\}\\
x&\mapsto&\frac{h}{g}\mycdot x
\end{array}\]
is continuous for the topology of $\Dloc(S)$, but also of $\D(S^\Delta)$. But since $\{g\mycdot x\,|\,x\in D\}$ is compact, this application is bi-continuous and we obtain the result. Now the proof of lemma is complete.
\end{proof}

\section{Proofs of main results on local Skorokhod metrics}\label{secSkoMet}
In this section we will  prove Theorem \ref{thmLocSkoTop}, Theorem \ref{th2ndCarCvg} and Theorem \ref{thmComp}, by  following the strategy developed in \S 12, pp. 121-137 from \cite{Bi99}. To  construct metrics on $\Dloc(S)$, we will consider a metric $d$ 
on $S$. To begin with, we define two families of pseudo-metrics:
\begin{lemma}[Skorokhod metrics]
For $0\leq t<\infty$ and $K\subset S$ a compact subset, the following two expressions on $\Dexp(S)$:
\begin{samepage}\begin{multline*}
\widetilde{\rho}_{t,K}(x^1,x^2):=\inf_{\substack{t_i\leq\xi(x^i)\\\lambda\in\widetilde{\Lambda},\lambda_{t_1}=t_2}}\sup_{s< t_1}d(x^1_s,x^2_{\lambda_s})\vee\|\lambda-{\rm id}\|_{t_1}\\
\end{multline*}
\vspace{-3.1cm}
\begin{multline*}
\\\vee\max_{i\in\{1,2\}}\Big(d(x^i_{t_i},K^c)\wedge(t-t_i)_+\1_{t_i<\xi(x^i)}\Big),
\end{multline*}
\vspace{-0.5cm}
\begin{multline*}
\rho_{t,K}(x^1,x^2):=\inf_{\substack{t_i\leq\xi(x^i)\\\lambda\in\Lambda,\lambda_{t_1}=t_2}}\sup_{s< t_1}d(x^1_s,x^2_{\lambda_s})\vee\|\log\dot{\lambda}\|_{t_1}\vee\|\lambda-{\rm id}\|_{t_1}\\
\end{multline*}
\vspace{-2.9cm}
\begin{multline*}
\\\vee\max_{i\in\{1,2\}}\Big(d(x^i_{t_i},K^c)\wedge(t-t_i)_+\1_{t_i<\xi(x^i)}\Big).
\end{multline*}\end{samepage}
define two pseudo-metrics.
\end{lemma}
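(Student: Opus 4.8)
The plan is to verify the three axioms of a pseudo-metric for $\rho_{t,K}$; the verification for $\widetilde\rho_{t,K}$ is word-for-word the same after deleting the term $\|\log\dot\lambda\|_{t_1}$ and replacing $\Lambda$ by $\widetilde\Lambda$. Non-negativity is clear, and the infimum is taken over a non-empty set and is finite: the choice $t_1=t_2=0$, $\lambda={\rm id}$ is always admissible and gives a value $\le t$, so $0\le\rho_{t,K}\le t$ throughout. For $\rho_{t,K}(x,x)=0$ one takes $\lambda={\rm id}$ and $t_1=t_2:=\min(t,\xi(x))$: the first three terms vanish, and the last maximum vanishes because either $t_1=\xi(x)$ (so the indicator is $0$) or $t_1=t$ (so $(t-t_1)_+=0$). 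For symmetry, if $(t_1,t_2,\lambda)$ is admissible for $\rho_{t,K}(x^1,x^2)$ then $(t_2,t_1,\lambda^{-1})$ is admissible for $\rho_{t,K}(x^2,x^1)$ (recall $\lambda^{-1}\in\Lambda$ and $\lambda^{-1}_{t_2}=t_1$) and realises the same value: the substitution $s=\lambda_r$ gives $\sup_{s<t_2}d(x^2_s,x^1_{\lambda^{-1}_s})=\sup_{r<t_1}d(x^1_r,x^2_{\lambda_r})$, the identities in \eqref{tnorme} and the display following it give $\|\lambda^{-1}-{\rm id}\|_{t_2}=\|\lambda-{\rm id}\|_{t_1}$ and $\|\log(\d\lambda^{-1}/\d s)\|_{t_2}=\|\log\dot\lambda\|_{t_1}$, and the last maximum is invariant under the simultaneous swap $x^1\leftrightarrow x^2$, $t_1\leftrightarrow t_2$. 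Taking infima gives one inequality, hence equality.

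The substance is the \emph{triangle inequality} $\rho_{t,K}(x^1,x^3)\le a+b$ where $a:=\rho_{t,K}(x^1,x^2)$ and $b:=\rho_{t,K}(x^2,x^3)$. Fix $\eps>0$ and choose admissible triples $(t_1,t_2,\lambda)$ and $(s_2,t_3,\mu)$ realising $a$ and $b$ to within $\eps$. The candidate reparametrisation for the pair $(x^1,x^3)$ is $\nu:=\mu\circ\lambda\in\Lambda$, but the two "intermediate" times $t_2=\lambda_{t_1}$ and $s_2$ — both endpoints living on $x^2$ — need not coincide, so I would first pass to the common range: if $t_2\le s_2$ use the endpoints $(t_1,\mu_{t_2})$, and if $t_2>s_2$ use $(\lambda^{-1}_{s_2},t_3)$, where $\lambda^{-1}_{s_2}<t_1$. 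In either case $\lambda$ maps the relevant initial segment of the first time axis into $[0,s_2)$, the domain on which $\mu$ is controlled.

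On that segment the first three terms of the $\rho_{t,K}$-functional are sub-additive: $d(x^1_s,x^3_{\nu_s})\le d(x^1_s,x^2_{\lambda_s})+d(x^2_{\lambda_s},x^3_{\mu_{\lambda_s}})$; the factorisation $\frac{\nu_{s_2}-\nu_{s_1}}{s_2-s_1}=\frac{\mu_{\lambda_{s_2}}-\mu_{\lambda_{s_1}}}{\lambda_{s_2}-\lambda_{s_1}}\cdot\frac{\lambda_{s_2}-\lambda_{s_1}}{s_2-s_1}$ with the sup-characterisation of $\|\log\dot{\,\cdot\,}\|$ from \eqref{tnorme} gives $\|\log\dot\nu\|\le\|\log\dot\mu\|+\|\log\dot\lambda\|$; and $|\nu_s-s|\le|\mu_{\lambda_s}-\lambda_s|+|\lambda_s-s|$ gives $\|\nu-{\rm id}\|\le\|\mu-{\rm id}\|+\|\lambda-{\rm id}\|$ — each piece $<a+b+2\eps$. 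The delicate point is the last maximum: one of the two new endpoints is literally an old endpoint ($t_1$ of $x^1$ in the first case, $t_3$ of $x^3$ in the second), so its term is bounded directly by $a+\eps$, resp. $b+\eps$; for the other new endpoint ($\mu_{t_2}$ on $x^3$, resp. $\lambda^{-1}_{s_2}$ on $x^1$) I would transfer the bound at the corresponding $x^2$-endpoint, using that the two paths are uniformly close there: $d(x^3_{\mu_{t_2}},K^c)\le d(x^2_{t_2},x^3_{\mu_{t_2}})+d(x^2_{t_2},K^c)$ (the map $y\mapsto d(y,K^c)$ being $1$-Lipschitz) and $(t-\mu_{t_2})_+\le(t-t_2)_++|\mu_{t_2}-t_2|$, after which the elementary identity $(A+c)\wedge(B+c)=(A\wedge B)+c$ collapses this into $\big(d(x^2_{t_2},K^c)\wedge(t-t_2)_+\big)+b+\eps<a+b+2\eps$; the case $t_2=s_2$ in the first alternative is handled directly since then $\mu_{t_2}=t_3$. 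One must check that the relevant indicators are $1$, i.e. $t_2<\xi(x^2)$, resp. $s_2<\xi(x^2)$, but these follow from the strict inequalities $t_2<s_2\le\xi(x^2)$, resp. $s_2<t_2\le\xi(x^2)$, valid exactly in the subcases where the transfer is used. Combining the four pieces yields $\rho_{t,K}(x^1,x^3)<a+b+2\eps$, and letting $\eps\downarrow0$ concludes.

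I expect the main obstacle to be precisely this bookkeeping in the triangle inequality — reconciling the two $x^2$-endpoints into a common range and propagating the "exit" term $d(\cdot,K^c)\wedge(t-\cdot)_+$ through that surgery — whereas non-negativity, reflexivity, symmetry, and the sub-additivity of the three "metric" terms once $\nu$ is fixed are routine.
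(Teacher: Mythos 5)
Your proof is correct and follows essentially the same route as the paper's: for the triangle inequality the paper likewise composes the two reparametrisations, truncates at the smaller of the two intermediate times on $x^2$ (its $\widecheck{t}_2:=t_2\wedge\widehat{t}_2$ is exactly your two-case "common range"), uses sub-additivity of the three metric terms, and transfers the exit term $d(\cdot,K^c)\wedge(t-\cdot)_+$ through $x^2$ via the $1$-Lipschitz bound $d(x^1_{\widecheck{t}_1},K^c)\wedge(t-\widecheck{t}_1)_+\leq d(x^1_{\widecheck{t}_1},x^2_{\widecheck{t}_2})\vee|\widecheck{t}_2-\widecheck{t}_1|+d(x^2_{\widecheck{t}_2},K^c)\wedge(t-\widecheck{t}_2)_+$. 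Your handling of the indicators and of the degenerate case $t_2=s_2$ matches what the paper leaves implicit, so there is nothing to add.
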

\begin{proof}
Let us perform the proof for $\rho_{t,K}$, the proof being similar for $\widetilde{\rho}_{t,K}$.  The non-trivial part is the triangle inequality. Let $x^1,x^2,x^3\in\Dexp(S)$ and $\eps>0$ be then there are $t_1\leq\xi(x^1)$, $t_2,\widehat{t}_2\leq\xi(x^2)$, $\widehat{t}_3\leq\xi(x^3)$ and $\lambda^1\in\Lambda$, $\lambda^2\in\Lambda$ such that
\begin{samepage}\begin{multline*}
\rho_{t,K}(x^1,x^2)+\eps\geq\sup_{s< t_1}d(x^1_s,x^2_{\lambda^1_s})\vee
\|\log\dot{\lambda}^1\|_{t_1}\vee\|\lambda^1-{\rm id}\|_{t_1}\\
\end{multline*}
\vspace{-2.7cm}
\begin{multline*}
\\\vee\max_{i\in\{1,2\}}\Big(d(x^i_{t_i},K^c)\wedge(t-t_i)_+\1_{t_i<\xi(x^i)}\Big),
\end{multline*}
\vspace{-0.5cm}
\begin{multline*}
\rho_{t,K}(x^2,x^3)+\eps\geq\sup_{s< \widehat{t}_2}d(x^2_s,x^3_{\lambda^2_s})\vee
\|\log\dot{\lambda}^2\|_{\widehat{t}_2}\vee\|\lambda^2-{\rm id}\|_{\widehat{t}_2}\\
\end{multline*}
\vspace{-2.7cm}
\begin{multline*}
\\\vee\max_{i\in\{2,3\}}\Big(d(x^i_{\widehat{t}_i},K^c)\wedge(t-\widehat{t}_i)_+\1_{\widehat{t}_i<\xi(x^i)}\Big).
\end{multline*}\end{samepage}
Define $\widecheck{t}_2:=t_2\wedge\widehat{t}_2$, $\widecheck{t}_1:=(\lambda^1)^{-1}_{\widecheck{t}_2}$, $\widecheck{t}_3:=\lambda^2_{\widecheck{t}_2}$ and $\lambda:=\lambda^2\circ\lambda^1$. Then
\begin{align*}
&\sup_{s< \widecheck{t}_1}d(x^1_s,x^3_{\lambda_s})\leq \sup_{s< t_1}d(x^1_s,x^2_{\lambda^1_s})+\sup_{s< \widehat{t}_2}d(x^2_s,x^3_{\lambda^2_s}),\\
&\|\log\dot{\lambda}\|_{\widecheck{t}_1}\leq \|\log\dot{\lambda}^1\|_{t_1} + \|\log\dot{\lambda}^2\|_{\widehat{t}_2},\quad\|\lambda-{\rm id}\|_{\widecheck{t}_1}\leq \|\lambda^1-{\rm id}\|_{t_1} + \|\lambda^2-{\rm id}\|_{\widehat{t}_2}.
\end{align*}
Moreover, for instance, if $\widecheck{t}_1\not =t_1$, then $\widecheck{t}_1<t_1\leq \xi(x_1)$, $\widehat{t}_2=\widecheck{t}_2<t_2\leq \xi(x_2)$ and
\begin{align*}
d(x^1_{\widecheck{t}_1},K^c)\wedge(t-\widecheck{t}_1)_+
&\leq d(x^1_{\widecheck{t}_1},x^2_{\widecheck{t}_2})\vee|\widecheck{t}_2-\widecheck{t}_1|+d(x^2_{\widecheck{t}_2},K^c)\wedge(t-\widecheck{t}_2)_+\\
&\leq \sup_{s< t_1}d(x^1_s,x^2_{\lambda^1_s})\vee\|\lambda^1-{\rm id}\|_{t_1} + d(x^2_{\widehat{t}_2},K^c)\wedge(t-\widehat{t}_2)_+.
\end{align*}
Hence
\[
\rho_{t,K}(x^1,x^3)\leq \rho_{t,K}(x^1,x^2)+\rho_{t,K}(x^2,x^3)+2\eps,
\]
so letting $\eps\to 0$, we obtain the triangular inequality.
\end{proof}
We prove that these pseudo-metrics are in somehow equivalent:
\begin{lemma}\label{lmEqDist}
Take $x,y\in\Dloc(S)$, $t\geq 0$ and a compact subset $K\subset S$, if $\widetilde{\rho}_{t,K}(x,y)\leq\frac{1}{9}$ then
\begin{align*}
\rho_{t,K}(x,y)\leq 6\cdot\sqrt{\widetilde{\rho}_{t,K}(x,y)}\vee\omega^\prime_{t,K,x}\left(\sqrt{\widetilde{\rho}_{t,K}(x,y)}\right).
\end{align*}
\end{lemma}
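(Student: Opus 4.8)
The strategy is the one of \S12 of \cite{Bi99}, adapted to the local framework: from a time change which is almost optimal for $\widetilde{\rho}_{t,K}$ but is only an increasing bijection, one manufactures a bi-Lipschitz one by interpolating it linearly along a subdivision that almost realises $\omega^\prime_{t,K,x}$. The gain is that a piecewise-linear time change automatically has a well-controlled log-derivative, so that the extra terms $\|\log\dot{\lambda}\|$ appearing in $\rho_{t,K}$ but not in $\widetilde{\rho}_{t,K}$ come for free, at the cost of the oscillation $\omega^\prime_{t,K,x}$.

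Set $\gamma:=\widetilde{\rho}_{t,K}(x,y)$, $\delta:=\sqrt{\gamma}\leq\tfrac13$ (using $\gamma\leq\tfrac19$) and $\omega:=\omega^\prime_{t,K,x}(\delta)$ (if $\omega=\infty$ the inequality is trivial). Fix $\eps>0$. By definition of $\widetilde{\rho}_{t,K}$ pick $t_1\leq\xi(x)$, $t_2\leq\xi(y)$ and $\lambda\in\widetilde{\Lambda}$ with $\lambda_{t_1}=t_2$ such that $\sup_{s<t_1}d(x_s,y_{\lambda_s})$, $\|\lambda-{\rm id}\|_{t_1}$ and the endpoint term $\max_{i\in\{1,2\}}\big(d(x^i_{t_i},K^c)\wedge(t-t_i)_+\1_{t_i<\xi(x^i)}\big)$ are all $<\gamma+\eps$ (with $x^1=x$, $x^2=y$); by \eqref{eqDefOme} pick a subdivision $0=u_0<\dots<u_m\leq\xi(x)$ with $(u_m,x_{u_m})\notin[0,t]\times K$, $u_i-u_{i-1}>\delta$ for all $i$, and $\sup_{u_{i-1}\leq s_1,s_2<u_i}d(x_{s_1},x_{s_2})\leq\omega+\eps$ for all $i$, chosen moreover to sit compatibly with $t_1$ (the delicate point, see below). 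Let $\mu\in\Lambda$ be the piecewise-linear increasing bijection that agrees with $\lambda$ at each $u_i$, is affine on each $[u_{i-1},u_i]$, and has $\dot{\mu}\equiv1$ beyond the last grid point used; let $\widetilde{t}_1$ be that last grid point and $\widetilde{t}_2:=\mu_{\widetilde{t}_1}=\lambda_{\widetilde{t}_1}\leq t_2$, so $\widetilde{t}_2\leq\xi(y)$.

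Then I would run the three estimates. On $[u_{i-1},u_i]$ one has $\dot{\mu}=(\lambda_{u_i}-\lambda_{u_{i-1}})/(u_i-u_{i-1})$, and since $|\lambda_{u_i}-\lambda_{u_{i-1}}-(u_i-u_{i-1})|\leq 2\|\lambda-{\rm id}\|_{t_1}<2\delta^2+2\eps$ while $u_i-u_{i-1}>\delta$, the slope stays within $2\delta+2\eps/\delta$ of $1$; with $\delta\leq\tfrac13$ and $-\log(1-a)\leq a/(1-a)$ this gives $\|\log\dot{\mu}\|_{\widetilde{t}_1}\leq 6\delta+o(1)$ as $\eps\to0$. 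Next, $\mu-{\rm id}$ is affine on each $[u_{i-1},u_i]$ and coincides with $\lambda-{\rm id}$ at the $u_i$, hence $\|\mu-{\rm id}\|_{\widetilde{t}_1}\leq\|\lambda-{\rm id}\|_{t_1}<\gamma+\eps$. Finally, for $s<\widetilde{t}_1$ lying in some $[u_{i-1},u_i)$, both $\lambda_s$ and $\mu_s$ lie in $[\lambda_{u_{i-1}},\lambda_{u_i})$, so $s^\prime:=\lambda^{-1}_{\mu_s}$ lies in $[u_{i-1},u_i)$ as well and $d(x_s,y_{\mu_s})\leq d(x_s,x_{s^\prime})+d(x_{s^\prime},y_{\lambda_{s^\prime}})\leq(\omega+\eps)+(\gamma+\eps)$; thus $\sup_{s<\widetilde{t}_1}d(x_s,y_{\mu_s})\leq\omega+\delta^2+o(1)$. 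Since $\delta^2\leq\delta$, each of these three quantities is $\leq 6(\delta\vee\omega)+o(1)$, so $(\widetilde{t}_1,\widetilde{t}_2,\mu)$ is admissible for $\rho_{t,K}(x,y)$ and yields $\rho_{t,K}(x,y)\leq 6(\delta\vee\omega)+o(1)$; letting $\eps\to0$ finishes the proof, apart from the endpoint term.

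The main obstacle is precisely that endpoint bookkeeping: choosing $\widetilde{t}_1$ so that both $d(x_{\widetilde{t}_1},K^c)\wedge(t-\widetilde{t}_1)_+\1_{\widetilde{t}_1<\xi(x)}$ and $d(y_{\widetilde{t}_2},K^c)\wedge(t-\widetilde{t}_2)_+\1_{\widetilde{t}_2<\xi(y)}$ are $\leq 6(\delta\vee\omega)$. The hypothesis says that $(t_1,x_{t_1})$ is within $\gamma+\eps$ of leaving $[0,t]\times K$ (or $t_1=\xi(x)$), and likewise for $(t_2,y_{t_2})$; one argues by cases on how the path leaves — exiting $K$, time exceeding $t$, or explosion — and on the position of $t_1$ relative to the subdivision: when $t_1$ can be taken as a grid point one sets $\widetilde{t}_1=t_1$; when $t_1$ falls strictly inside $[u_{i-1},u_i)$ one sets $\widetilde{t}_1=u_{i-1}$ and transports the closeness from $x_{t_1}$ to $x_{u_{i-1}}$ via $\sup_{u_{i-1}\leq s_1,s_2<u_i}d(x_{s_1},x_{s_2})\leq\omega+\eps$; the exit-by-time case is accommodated by arranging the subdivision to have a grid point near $t-\delta$; and the $y$-side endpoint term at $\widetilde{t}_2=\lambda_{\widetilde{t}_1}$ is handled using $|\widetilde{t}_2-t_2|=O(\delta)$ together with the $\widetilde{\rho}_{t,K}$-bound on $\max_i(\cdots)$. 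Note that, in contrast to the situation on $\D([0,1],S)$ in \cite{Bi99}, there is no "short last interval" to treat, since \eqref{eqDefOme} already forces every subinterval of the subdivision to have length $>\delta$; the only substantive ingredient beyond this case analysis is the triangle-inequality transfer $d(x_s,y_{\mu_s})\leq d(x_s,x_{s^\prime})+d(x_{s^\prime},y_{\lambda_{s^\prime}})$ across the reparametrisation, which is exactly what brings $\omega^\prime_{t,K,x}$ into the bound.
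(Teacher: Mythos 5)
Your strategy is the paper's: interpolate the near-optimal $\widetilde{\Lambda}$-time change linearly along a subdivision that nearly realises $\omega^\prime_{t,K,x}(\delta)$, observe that the three ``running'' quantities ($\|\log\dot{\mu}\|$, $\|\mu-{\rm id}\|$, and the sup-distance via the same-interval transfer $d(x_s,y_{\mu_s})\le d(x_s,x_{s'})+d(x_{s'},y_{\lambda_{s'}})$) are then controlled, and optimise with $\delta=\sqrt{\widetilde{\rho}_{t,K}(x,y)}$. That part is correct and matches the paper's computation, including the constant $6$ coming from $2\delta/(1-2\delta)\le 6\delta$ for $\delta\le\tfrac13$.

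The gap is in the endpoint bookkeeping, which you rightly single out but then resolve incorrectly on the $y$-side. You propose to control $d(y_{\widetilde{t}_2},K^c)\wedge(t-\widetilde{t}_2)_+$ from the hypothesis at $t_2$ ``using $|\widetilde{t}_2-t_2|=O(\delta)$'': this cannot work, because passing from $t_2$ to $\widetilde{t}_2$ along $y$ requires a bound on $d(y_{\widetilde{t}_2},y_{t_2})$, and you have no oscillation control on $y$ whatsoever ($\omega^\prime$ is only used for $x$). The paper instead transports the already-established $x$-side bound at $\widetilde{t}_1$ across the graph of $\mu$, namely $d(y_{\mu_{\widetilde{t}_1}},K^c)\wedge(t-\mu_{\widetilde{t}_1})_+\le d(x_{\widetilde{t}_1},K^c)\wedge(t-\widetilde{t}_1)_+ + d(x_{\widetilde{t}_1},y_{\mu_{\widetilde{t}_1}})\vee|\widetilde{t}_1-\mu_{\widetilde{t}_1}|$, where both correction terms are at most $\widetilde{\rho}_{t,K}(x,y)+\eps$. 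Relatedly, on the $x$-side you need $t_1-\widetilde{t}_1=O(\delta)$ to control the increase of $(t-\cdot)_+$ when you move the cut-off back to a grid point; your ad hoc ``grid point near $t-\delta$'' covers only the exit-by-time case, whereas the clean fix (the one in the paper) is to refine the subdivision so that every spacing lies in $(\delta,2\delta]$ --- always possible without increasing the oscillation --- which gives $t_1-\widetilde{t}_1\le 2\delta$ uniformly and produces the term $\omega^\prime_{t,K,x}(\delta)\vee(2\delta)$ absorbed into the final bound. You should also record the degenerate case where $\widetilde{t}_1$ is the last subdivision point $u_m$, for which the $x$-endpoint term vanishes outright since $(u_m,x_{u_m})\notin[0,t]\times K$.
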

\begin{proof}
Let $\eps>0$ be arbitrary. There exist $\mu\in\widetilde{\Lambda}$ and $T\geq 0$ such that $T\leq\xi(x)$, $\mu_T\leq\xi(y)$ and
\begin{align*}
\sup_{s< T}d(x_s,y_{\mu_s})\vee\|\mu-{\rm id}\|_T\leq \widetilde{\rho}_{t,K}(x,y)+\eps ,\\
d(x_T,K^c)\wedge(t-T)_+\1_{T<\xi(x)}\leq \widetilde{\rho}_{t,K}(x,y)+\eps ,\\
d(y_{\mu_T},K^c)\wedge(t-\mu_T)_+\1_{\mu_T<\xi(y)}\leq \widetilde{\rho}_{t,K}(x,y)+\eps.
\end{align*}
Let $\delta>2\widetilde{\rho}_{t,K}(x,y)+2\eps$ be arbitrary, there exist $0=t_0<\cdots< t_N\leq\xi(x)$ such that
\[
\sup_{\substack{0\leq i<N\\t_i\leq s_1,s_2<t_{i+1}}}d(x_{s_1},x_{s_2})\leq \omega^\prime_{t,K,x}(\delta)+\eps,
\]
$\delta<t_{i+1}-t_i\leq 2\delta$ and $(t_N,x_{t_N})\not\in[0,t]\times K$.
Set $n_0:=\max\left\{0\leq i\leq N\mid t_i\leq T\right\}$ and $\widetilde{T}:=t_{n_0}$. Define $\lambda\in\Lambda$ by
\[\left\{\begin{array}{ll}
\forall i\leq n_0,&\lambda_{t_i}=\mu_{t_i},\\
\forall i< n_0,&\lambda\text{ is affine on }[t_i,t_{i+1}],\\
\forall s\geq \widetilde{T},&\dot{\lambda}_s=1.
\end{array}\right.\]
Then
\[
\|\lambda-{\rm id}\| = \sup_{0<i\leq n_0}\|\mu_{t_i}-t_i\|\leq \|\mu-{\rm id}\|_T\leq \widetilde{\rho}_{t,K}(x,y)+\eps.
\]
For  $0\leq i< n_0$ we have 
\[
\left|\frac{\mu_{t_{i+1}}-\mu_{t_i}-t_{i+1}+t_i}{t_{i+1}-t_i}\right|\leq\frac{2\|\mu-{\rm id}\|_T}{\delta}
\leq\frac{2\widetilde{\rho}_{t,K}(x,y)+2\eps}{\delta}<1,
\]
so by the classical estimate:
\[
|\log(1+r)|\leq\frac{|r|}{1-|r|}\quad\mbox{ for }\,|r|<1.
\]
we deduce 
\[
\|\log\dot{\lambda}\| = \sup_{0\leq i< n_0}\left|\log\frac{\mu_{t_{i+1}}-\mu_{t_i}}{t_{i+1}-t_i}\right|\leq\frac{2\widetilde{\rho}_{t,K}(x,y)+2\eps}{\delta-2\widetilde{\rho}_{t,K}(x,y)-2\eps}.
\]
Since for $s<\lambda_{\widetilde{T}}$, $\lambda^{-1}_s$ and $\mu^{-1}_s$ lies  in the same interval $[t_i,t_{i+1})$. Therefore
\[
\sup_{s<\lambda_{\widetilde{T}}}d(x_{\lambda^{-1}_s},y_s)\leq \sup_{s<\lambda_{\widetilde{T}}}\left(d(x_{\mu^{-1}_s},y_s)+d(x_{\mu^{-1}_s},x_{\lambda^{-1}_s})\right)\leq \widetilde{\rho}_{t,K}(x,y)+\omega^\prime_{t,K,x}(\delta)+2\eps.
\]
For the two last terms in $\rho_{t,K}$ we may consider only the case were $\widetilde{T}\not = T$. If $n_0=N$: $d(x_{\widetilde{T}},K^c)\wedge(t-\widetilde{T})_+=0$, otherwise:
\begin{align*}
d(x_{\widetilde{T}},K^c)\wedge(t-\widetilde{T})_+
&\leq d(x_T,K^c)\wedge(t-T)_++d(x_{\widetilde{T}},x_T)\vee(T-\widetilde{T})\\
&\leq\widetilde{\rho}_{t,K}(x,y)+\omega^\prime_{t,K,x}(\delta)\vee (2\delta)+2\eps .
\end{align*}
By using $\lambda_{\widetilde{T}}=\mu_{\widetilde{T}}$, we also have
\begin{align*}
d(y_{\lambda_{\widetilde{T}}},K^c)\wedge(t-\lambda_{\widetilde{T}})_+
&\leq d(x_{\widetilde{T}},K^c)\wedge(t-\widetilde{T})_+ + d(x_{\widetilde{T}},y_{\mu_{\widetilde{T}}})\vee|\widetilde{T}-\mu_{\widetilde{T}}|\\
&\leq 2\widetilde{\rho}_{t,K}(x,y)+\omega^\prime_{t,K,x}(\delta)\vee (2\delta)+3\eps.
\end{align*}
Letting $\eps\to 0$ we obtain that for all $\delta>2\widetilde{\rho}_{t,K}(x,y)$,
\[
\rho_{t,K}(x,y)\leq\left(2\widetilde{\rho}_{t,K}(x,y)+\omega^\prime_{t,K,x}(\delta)\vee (2\delta)\right)\vee\frac{2\widetilde{\rho}_{t,K}(x,y)}{\delta-2\widetilde{\rho}_{t,K}(x,y)}.
\]
Finally, by taking $\delta:= \sqrt{\widetilde{\rho}_{t,K}(x,y)}$ we have for $\widetilde{\rho}_{t,K}(x,y)\leq\frac{1}{9}$
\begin{align*}
\rho_{t,K}(x,y)&\leq\Big(2\widetilde{\rho}_{t,K}(x,y)+\omega^\prime_{t,K,x}(\delta)\vee (2\delta)\Big)\vee\frac{2\widetilde{\rho}_{t,K}(x,y)}{\delta-2\widetilde{\rho}_{t,K}(x,y)}\\
&\leq \Big(\frac{2}{3}\sqrt{\widetilde{\rho}_{t,K}(x,y)}+\omega^\prime_{t,K,x}\big(\sqrt{\widetilde{\rho}_{t,K}(x,y)}\big)\vee \big(2\sqrt{\widetilde{\rho}_{t,K}(x,y)}\big)\Big)\vee 6\sqrt{\widetilde{\rho}_{t,K}(x,y)}\\
&\leq 6\cdot\sqrt{\widetilde{\rho}_{t,K}(x,y)}\vee\omega^\prime_{t,K,x}\big(\sqrt{\widetilde{\rho}_{t,K}(x,y)}\big).\qedhere
\end{align*}
\end{proof}
At this level it can be pointed out that we obtain the definition of the  local Skorokhod topology. Indeed, by using Proposition \ref{propMod}, Lemma \ref{lmEqDist} and the fact that $\widetilde{\rho}_{t,K}\leq \rho_{t,K}$, the two families of pseudo-metrics $(\widetilde{\rho}_{t,K})_{t,K}$ and $(\rho_{t,K})_{t,K}$ define the same topology on $\Dloc(S)$, the local Skorokhod topology.

If $(K_n)_{n\in\N}$ is an exhaustive sequence of compact sets of $S$, then the mapping
\begin{align}\label{eqDist}\begin{array}{ccc}
\Dloc(S)^2&\to&\R_+\\
(x,y)&\mapsto&\sum_{n\in\N}2^{-n}\rho_{n,K_n}(x,y)\wedge 1
\end{array}\end{align}
is a metric for the local Skorokhod topology.
By using a diagonal extraction procedure, it is not difficult to prove that a sequence $(x^k)_k$ converges to $x$ for this topology if and only if there exists a sequence $(\lambda^k)_k$ in $\Lambda$ such that
\begin{itemize}
\item either $\xi(x)<\infty$ and $\{x_s\}_{s<\xi(x)}\Subset S$: for $k$ large enough $\lambda^k_{\xi(x)}\leq\xi(x^k)$ and
\[
\sup_{s<\xi(x)}d(x_s,x^k_{\lambda^k_s})\cv{}0,\hspace{1cm}x^k_{\lambda^k_{\xi(x)}}\cv{}\Delta,\hspace{1cm}\|\log\dot{\lambda}^k\|_{\xi(x)}\cv{}0,\quad\mbox{ as }\,k\to\infty,
\]
\item or $\xi(x)=\infty$ or $\{x_s\}_{s<\xi(x)}\not\Subset S$: for all $t<\xi(x)$, for $k$ large enough $\lambda^k_t<\xi(x^k)$ and
\[
\sup_{s\leq t}d(x_s,x^k_{\lambda^k_s})\cv{}0,\hspace{1cm}\|\log\dot{\lambda}^k\|_t\cv{}0,\quad\mbox{ as }\,k\to\infty.
\]
\end{itemize}
The local Skorokhod topology can be described by a similar characterisation with $\lambda^k\in\Lambda$ replaced by $\lambda^k\in\widetilde{\Lambda}$ and respectively, $\|\log\dot{\lambda}^k\|$ replaced by $\|\lambda -{\rm id}\|$.
The fact that the local Skorokhod topology does not depend on the distance $d$ is a consequence of the following lemma, which states essentially that two metrics on a compact set are uniformly equivalent:
\begin{lemma}\label{lmUU}
Let $T$ be a set and $x,x^k\in S^T$ be such that $\{x_t\}_{t\in T}\Subset S$, then
\[
\sup_{t\in T}d(x_t,x^k_t)\cv{}0,\quad\text{as }k\to\infty,
\]
if and only if
\[
\forall U\subset S^2\text{ open subset containing }\{(y,y)\}_{y\in S},\quad\exists k_0\;\forall k\geq k_0,\;\forall t\in T,\quad (x_t,x^k_t)\in U.
\]
So the topology of the uniform convergence on $\left\{x\in S^T\mid \{x_t\}_{t\in T}\Subset S\right\}$ depends only of the topology of $S$.
\end{lemma}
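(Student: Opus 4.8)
The plan is to prove the two implications directly; both are elementary once one extracts, from an open neighbourhood of the diagonal in $S^2$, a uniform radius that works along the compact set $K:=\overline{\{x_t\}_{t\in T}}$, which is compact by the assumption $\{x_t\}_{t\in T}\Subset S$.

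For the implication ``$\Rightarrow$'', fix an open $U\subset S^2$ with $\{(y,y)\}_{y\in S}\subset U$. For each $y\in K$ choose $\eps_y>0$ such that $B(y,\eps_y)\times B(y,\eps_y)\subset U$; since $K$ is compact, finitely many balls $B(y_i,\eps_{y_i}/2)$, $1\leq i\leq n$, cover $K$, and I set $\eps:=\min_{1\leq i\leq n}\eps_{y_i}/2>0$. If $y\in K$, $z\in S$ and $d(y,z)<\eps$, then $y\in B(y_i,\eps_{y_i}/2)$ for some $i$, whence $y,z\in B(y_i,\eps_{y_i})$ and $(y,z)\in U$. Now pick $k_0$ with $\sup_{t\in T}d(x_t,x^k_t)<\eps$ for $k\geq k_0$; as $x_t\in K$ for every $t\in T$, this gives $(x_t,x^k_t)\in U$ for all $t\in T$ and all $k\geq k_0$. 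For ``$\Leftarrow$'', given $\eps>0$ apply the hypothesis to $U_\eps:=\{(y,z)\in S^2\mid d(y,z)<\eps\}$, which is open because $d$ is continuous on $S^2$ and which contains the diagonal: for $k$ large enough $\sup_{t\in T}d(x_t,x^k_t)\leq\eps$, and letting $\eps\downarrow 0$ yields $\sup_{t\in T}d(x_t,x^k_t)\cv{}0$, as $k\to\infty$.

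For the last assertion, observe that $\widetilde{d}(x,y):=\sup_{t\in T}d(x_t,y_t)$ is a finite metric on $E:=\{x\in S^T\mid\{x_t\}_{t\in T}\Subset S\}$ (finiteness since $d$ is bounded on the compact set $\overline{\{x_t\}}\times\overline{\{y_t\}}$), so the uniform topology on $E$ is metrizable and hence determined by its convergent sequences. The characterisation just proved expresses $\widetilde{d}$-convergence $x^k\to x$ in $E$ purely in terms of open subsets of $S^2$ containing the diagonal, a notion depending only on the topology of $S$; therefore any two metrics inducing the topology of $S$ produce the same convergent sequences in $E$, hence the same topology. The only genuinely non-formal point is the extraction of the uniform $\eps$ in the first implication, which is the usual finite-subcover argument and is where compactness of $K$ enters in an essential way.
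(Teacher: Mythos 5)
Your proof is correct and follows essentially the same route as the paper's: compactness of $\overline{\{x_t\}_{t\in T}}$ is used to extract a uniform $\eps>0$ with $\{(y,z)\mid y\in K,\ d(y,z)<\eps\}\subset U$ for the forward implication, and the openness of $\{(y,z)\mid d(y,z)<\eps\}$ gives the converse. You merely spell out the finite-subcover step and the metrizability argument for the final assertion, which the paper leaves implicit.
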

\begin{proof}
Suppose that $\sup_{t\in T}d(x_t,x^k_t)\cv{}0$ as $k\to\infty$ and take an open subset $U\subset S^2$ containing $\{(y,y)\}_{y\in S}$. By compactness there exists $\eps>0$ such that
\[
\Big\{(y_1,y_2)\in S^2\;|\; y_1\in \{x_t\}_t,~d(y_1,y_2)<\eps\Big\}\subset U,
\]
so for $k$ large enough and for all $t$, $(x_t,x^k_t)\in U$. To get the converse property 
we use the fact that $\big\{(y_1,y_2)\in S^2\,|\,d(y_1,y_2)<\eps\big\}$ is open.
\end{proof}
In the next lemma we discuss the completeness :
\begin{lemma}\label{lmComp}
Suppose that $(S,d)$ is complete. Then any sequence $(x^k)_k\in(\Dloc(S))^\N$ satisfying
\[
\forall t\geq 0,~\forall K\subset S\text{ compact, }\quad \rho_{t,K}(x^{k_1},x^{k_2})\cv{k_1,k_2\to\infty}0,
\]
admits a limit for the local Skorokhod topology.
\end{lemma}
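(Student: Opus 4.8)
The plan is to follow the completeness argument for the classical Skorokhod space (Billingsley \cite{Bi99}, \S12), adapting the composition‑of‑time‑changes construction to the exploding setting. The hypothesis says exactly that $(x^k)_k$ is a Cauchy sequence for the metric \eqref{eqDist}, so it suffices to produce a convergent subsequence. Fix an exhaustive sequence of compacts $(K_n)_n$ with $K_n\subset\overset{\circ}{K}_{n+1}$; since $(t,K)\mapsto\rho_{t,K}$ is nondecreasing, we may extract a subsequence, still written $(x^k)_k$, with $\rho_{k,K_k}(x^k,x^{k+1})<2^{-k}$ for every $k$, so that $\sum_{k\geq m}\rho_{m,K_m}(x^k,x^{k+1})\leq 2^{1-m}$ for every level $m$.

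For each $k$ and each $m\leq k$, unravelling the definition of $\rho_{m,K_m}(x^k,x^{k+1})$ furnishes times $a_k\leq\xi(x^k)$, $b_k\leq\xi(x^{k+1})$ (depending on $m$) and $\mu_k\in\Lambda$ with $\mu_k(a_k)=b_k$, $\|\log\dot\mu_k\|_{a_k}\vee\|\mu_k-{\rm id}\|_{a_k}\vee\sup_{s<a_k}d(x^k_s,x^{k+1}_{\mu_k(s)})<2^{1-k}$, and the two boundary terms $<2^{1-k}$. Fix $m$. The finite compositions $\nu_j:=\mu_{j-1}\circ\cdots\circ\mu_m$ stay inside a ``window'' of $x^j$; since the seminorms $\|\log\dot\lambda\|$ and $\|\lambda-{\rm id}\|$ are subadditive under composition while $\sum_{k\geq m}2^{1-k}<\infty$, the $\|\log\dot\nu_j\|$ remain bounded and $(\nu_j)_j$ is uniformly Cauchy on the window; it therefore converges to some $\nu_\infty\in\Lambda$, the bound $\|\log\dot\nu_\infty\|<\infty$ giving the bi‑Lipschitz property. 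Consequently $x^j\circ\nu_j$ is uniformly Cauchy on the window — this is the only place where the completeness of $(S,d)$ is used — and converges to a path $x^{(m)}$, which is \cadlag because a limit, uniform on compact subintervals, of \cadlag paths is \cadlag. The windows attached to successive values of $m$ are nested and the $x^{(m)}$ agree on overlaps (a Cauchy sequence has at most one limit), so they glue to a path $x\in\Dexp(S)$ with $\xi(x):=\sup_m T_m$; by construction $\rho_{m,K_m}(x^k,x)\to 0$ for every $m$, i.e. $x^k\to x$ in the local Skorokhod topology.

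It remains to check that $x\in\Dloc(S)$, otherwise the statement is empty. By Proposition~\ref{propMod}~i) this amounts to $\omega^\prime_{t,K,x}(\delta)\to 0$ as $\delta\to 0$ for every $t\geq 0$ and compact $K$. Fix $t,K$ and $\eta>0$, choose a compact $K'$ with $K\subset\overset{\circ}{K'}$ and set $t':=t+1$. For $k$ large, $\rho_{t',K'}(x^k,x)$ is arbitrarily small, and rerunning the estimate from the proof of Proposition~\ref{propMod}~ii) — which uses only the existence of a connecting time change, not that the limit lies in $\Dloc(S)$ — one bounds $\omega^\prime_{t,K,x}(\delta)$ by $\omega^\prime_{t',K',x^k}(\delta')+2\rho_{t',K'}(x^k,x)$ whenever $\delta$ is slightly smaller than $\delta'$. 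Choosing first $k$ large and then, using $x^k\in\Dloc(S)$, $\delta'$ small, yields $\omega^\prime_{t,K,x}(\delta)<\eta$; hence $x\in\Dloc(S)$, and the subsequence, and therefore the original Cauchy sequence, converges to $x$.

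The delicate point is the bookkeeping around the explosion time: one must verify that the windows on which the successive time changes are controlled are nested and exhaust $[0,\xi(x))$, that the composed time changes $\nu_j$ do not leave their window prematurely, and, above all, that the limiting path meets the defining condition of $\Dloc(S)$ at $\xi(x)$. This last step cannot be reduced to the upper semicontinuity of $\omega^\prime$ in Proposition~\ref{propMod}~ii), since that statement already presupposes that the limit belongs to $\Dloc(S)$; the corresponding quantitative estimate has to be carried out by hand, which is precisely what the comparison with $x^k$ and the slightly larger compact $K'$ above provides.
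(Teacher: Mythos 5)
Your overall strategy is the paper's: extract a subsequence with summable $\rho$-increments, compose the connecting time changes $\mu_k$, use completeness of $(S,d)$ to get a limit path on the union of the windows, and then verify membership in $\Dloc(S)$. The genuine gap is the point you yourself flag in your last paragraph and then leave unproved: that the composed time changes have a common nontrivial window. Concretely, $\mu_{j-1}\circ\cdots\circ\mu_m$ is only controlled on $\{s:\ s\le a_m,\ \mu_m(s)\le a_{m+1},\ \dots\}$, so the natural domain of $\nu_\infty$ is $t_m:=\bigwedge_{i\ge 0}(\mu_m)^{-1}\circ\cdots\circ(\mu_{m+i-1})^{-1}(a_{m+i})$, an infimum of infinitely many pulled-back control times which could a priori be strictly smaller than every one of them. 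The paper devotes an entire step (Step 3 of its proof) to showing, by a contradiction argument that uses the boundary terms of $\rho$ and the convergence of $x^k_{t_k}$ to a point of $S$, that this infimum is in fact attained. This is not bookkeeping: it is exactly what makes the assertion ``by construction $\rho_{m,K_m}(x^k,x)\to 0$'' true. The pseudo-metric $\rho_{t,K}$ is not just a sup-distance plus time-change seminorms; it contains the terms $d(x^i_{t_i},K^c)\wedge(t-t_i)_+\1_{t_i<\xi(x^i)}$, which force the matching window either to reach time $t$ or to exit $K$. If the common window stopped strictly short of both, these terms would \emph{not} be small, and $x^k$ would not converge to your glued path $x$ in the local Skorokhod topology. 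So the convergence claim, and with it the whole conclusion, rests on the unproved attainment of the infimum.

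A second, dependent issue: your argument that $x\in\Dloc(S)$ (a legitimate alternative to the paper's Step 5, which instead shows directly that $\xi(x)=\lambda^k_{t_k}$ for large $k$ and deduces existence of $x_{\xi(x)-}$ from uniform convergence of paths having left limits) takes $\rho_{t',K'}(x^k,x)\to 0$ as an input, so it inherits the same gap. The quantitative two-sided comparison of moduli that you sketch — bounding $\omega^\prime_{t,K,x}(\delta)$ by $\omega^\prime_{t',K',x^k}(\delta')$ plus the sup-distance for $\delta$ slightly smaller than $\delta'$ — is sound in principle (it is the symmetric counterpart of the estimate in the proof of Proposition~\ref{propMod}), but to transport a subdivision of $x^k$ ending outside $[0,t']\times K'$ to one for $x$ ending outside $[0,t]\times K$ you again need control of where the time-changed window ends, i.e.\ the same boundary-term analysis. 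To complete the proof you must supply the attainment argument (or an equivalent device guaranteeing that the limit path's window either reaches time $t$ or leaves $K$).
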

The proof of this lemma follows the same reasoning as the proof of the triangular inequality, the proof of Theorem 12.2 pp. 128-129 from \cite{Bi99} and the proof of Theorem 5.6 pp. 121-122 from \cite{EK86}.
\begin{proof}
It suffices to prove that $(x^k)_k$ have a converging subsequence. By taking, possibly, a subsequence we can suppose that
\begin{align}\label{eqlmComp1}
\forall t\geq 0,~\forall K\subset S\text{ compact, }\quad \sum_{k\geq 0}\rho_{t,K}(x^k,x^{k+1})<\infty.
\end{align}
We split our proof in five steps. 

\noindent
\emph{Step 1: we construct a sequence $(\lambda^k)_k\subset\Lambda$.}
Let $\mu^k\in\Lambda$ and $\widetilde{t}_k\geq 0$ be such that for all $t\geq 0$ and $K\subset S$ compact, we have for $k$ large enough
\begin{align}\label{eqlmComp2}\begin{split}
\widetilde{t}_k\leq\xi(x^k),\quad \mu^k_{\widetilde{t}_k}\leq\xi(x^{k+1}),\\
\sup_{s< \widetilde{t}_k}d(x^k_s,x^{k+1}_{\mu^k_s})\vee\|\log\dot{\mu}^k\|_{\widetilde{t}_k}\vee\|\mu^k-{\rm id}\|_{\widetilde{t}_k}\leq 2\rho_{t,K}(x^k,x^{k+1}),\\
\Big(d(x^k_{\widetilde{t}_k},K^c)\wedge(t-\widetilde{t}_k)_+\1_{\widetilde{t}_k<\xi(x^k)}\Big)\leq 2\rho_{t,K}(x^k,x^{k+1}),\\
\Big(d(x^{k+1}_{\mu^k_{\widetilde{t}_k}},K^c)\wedge(t-\mu^k_{\widetilde{t}_k})_+\1_{\mu^k_{\widetilde{t}_k}<\xi(x^{k+1})}\Big)\leq 2\rho_{t,K}(x^k,x^{k+1}).
\end{split}\end{align}
For all $k\geq 0$ define
\begin{align}\label{eqlmComp3}
t_k:=\bigwedge_{i\geq 0}(\mu^k)^{-1}\circ\cdots\circ(\mu^{k+i-1})^{-1}(\widetilde{t}_{k+i}),
\end{align}
so $t_k\leq\widetilde{t}_k$ and $\mu^k_{t_k}\leq t_{k+1}$. For $k,i\geq 0$
\begin{align*}
\|\log\frac{\d}{\d s}(\mu^{k+i-1}\circ\cdots\circ\mu^k(s))\|_{t_k}\leq \sum_{\ell=k}^{k+i-1}\|\log\dot{\mu}^\ell\|_{\widetilde{t}_\ell},\\
\|\mu^{k+i-1}\circ\cdots\circ\mu^k-{\rm id}\|_{t_k}\leq \sum_{\ell=k}^{k+i-1}\|\mu^\ell-{\rm id}\|_{\widetilde{t}_\ell}
\end{align*}
and for $j\geq i$
\begin{align*}
\|\mu^{k+j-1}\circ\cdots\circ\mu^k-\mu^{k+i-1}\circ\cdots\circ\mu^k\|_{t_k}
&\leq\|\mu^{k+j-1}\circ\cdots\circ\mu^{k+i}-{\rm id}\|_{t_{k+i}}\\
&\leq \sum_{\ell=k+i}^{k+j-1}\|\mu^\ell-{\rm id}\|_{\widetilde{t}_\ell}.
\end{align*}
Using \eqref{eqlmComp1} and \eqref{eqlmComp2} we obtain
\[
\sum_{\ell\geq 0}\|\mu^\ell-{\rm id}\|_{\widetilde{t}_\ell}<\infty,\quad\quad\sum_{\ell\geq 0}\|\log\dot{\mu}^\ell\|_{\widetilde{t}_\ell}<\infty,
\]
so the restriction to $[0,t_k]$ of continuous functions $\mu^{k+i-1}\circ\cdots\circ\mu^k$ converges uniformly to a continuous function. Set
\[\left\{\begin{array}{ll}
\lambda^k_s:=\lim_{i\to\infty}\mu^{k+i-1}\circ\cdots\circ\mu^k(s),&\text{ if }s\leq t_k, \\
\dot{\lambda}^k_s:=1,&\text{ if }s\geq t_k. \\
\end{array}\right.\]
Clearly for $s\leq t_k$, $\lambda^k_s=\lambda^{k+1}\circ\mu^k(s)$. We have
\begin{align}
\label{eqlmComp4}\|\lambda^k-{\rm id}\|\leq&\sup_{i\geq 0}\|\mu^{k+i-1}\circ\cdots\circ\mu^k-{\rm id}\|_{t_k}\leq \sum_{\ell\geq k}\|\mu^\ell-{\rm id}\|_{\widetilde{t}_\ell}<\infty,\\
\nonumber\|\log\dot{\lambda}^k\|&=\sup_{0\leq s_1<s_2\leq t_k}\left|\log\frac{\lambda^k_{s_2}-\lambda^k_{s_1}}{s_2-s_1}\right|\leq\label{eqlmComp5}\sup_{i\geq 0}\|\log\frac{\d}{\d s}(\mu^{k+i-1}\circ\cdots\circ\mu^k(s))\|_{t_k}\\
&\leq \sum_{\ell\geq k}\|\log\dot{\mu}^\ell\|_{\widetilde{t}_\ell}<\infty,
\end{align}
so $\lambda^k\in\Lambda$.\\
\emph{Step 2: we construct a path $x\in\Dexp(S)$.}
For all $k\geq 0$: $\lambda^k_{t_k}=\lambda^{k+1}_{\mu^k_{t_k}}\leq\lambda^{k+1}_{t_{k+1}}$ and moreover for all $0\leq k_1\leq k_2$
\[
\sup_{s<\lambda^{k_1}_{t_{k_1}}}d\left(x^{k_1}_{(\lambda^{k_1})^{-1}_s},x^{k_2}_{(\lambda^{k_2})^{-1}_s}\right)
=\sup_{s<t_{k_1}}d\left(x^{k_1}_s,x^{k_2}_{\mu^{k_1-1}\circ\cdots\circ\mu^{k_1}(s)}\right)
\leq\sum_{\ell=k_1}^{k_2-1}\sup_{s< \widetilde{t}_\ell}d(x^\ell_s,x^{\ell+1}_{\mu^\ell_s}).
\]
By using \eqref{eqlmComp1} and \eqref{eqlmComp2} we get $\sum_{\ell\geq 0}\sup_{s< \widetilde{t}_\ell}d(x^\ell_s,x^{\ell+1}_{\mu^\ell_s})<\infty$.  By using the completeness of $(S,d)$, we deduce that, for each $m\in\N$, the sequence $x^k_{(\lambda^k)^{-1}}$ converges uniformly on $[0,\lambda^m_{t_m})$. We can define $x\in\Dexp(S)$ by setting
\[
\displaystyle\xi(x):=\lim_{k\to\infty}\lambda^k_{t_k}\quad\mbox{ and }\quad
\displaystyle \forall s<\xi(x),~x_s:=\lim_{k\to\infty}x^k_{(\lambda^k)^{-1}_s}.
\]
We see that, for all $k\geq 0$
\begin{align}\label{eqlmComp6}
\sup_{s<\lambda^k_{t_k}}d(x^k_{(\lambda^k)^{-1}_s},x_s)
=\sup_{s<t_k}d\big(x^k_s,x_{\lambda^k_s}\big)
\leq\sum_{\ell\geq k}\sup_{s< \widetilde{t}_\ell}d(x^\ell_s,x^{\ell+1}_{\mu^\ell_s}).
\end{align}
\emph{Step 3: we prove that the infimum in \eqref{eqlmComp3} is a minimum.}
Suppose that there exists $k_0\geq 0$ such that
\[
\forall i\geq 0,\quad t_{k_0}<(\mu^{k_0})^{-1}\circ\cdots\circ(\mu^{k_0+i-1})^{-1}(\widetilde{t}_{k_0+i})
\]
and we will show that one get a contradiction. Firstly, note that for all $k\geq k_0$ we necessarily have $\mu^k_{t_k}=t_{k+1}$ and $t_k<\widetilde{t}_k$ so $\lambda^k_{t_k}$ is constant equal to $\xi(x)$ and furthermore
\[
d(x^k_{t_k},x^{k+1}_{t_{k+1}})\leq\sup_{s< \widetilde{t}_k}d(x^k_s,x^{k+1}_{\mu^k_s}).
\]
Since $\sum_{k\geq 0}\sup_{s< \widetilde{t}_k}d(x^k_s,x^{k+1}_{\mu^k_s})<\infty$,
$x^k_{t_k}$ converges to an element $a\in S$. Let $\eps>0$ be arbitrary such that $K:=\overline{B(a,3\eps)}\subset S$ is compact. Let $k_1\geq k_0$ be such that 
\[
d(x^{k_1}_{t_{k_1}},a)<\eps,\quad \sum_{k\geq k_1}\rho_{\xi(x)+4\eps,K}(x^k,x^{k+1})<\frac{\eps}{2}
\]
and such that \eqref{eqlmComp2} holds for all $k\geq k_1$ with $t=\xi(x)+4\eps$. Set
\[
s_\ell:=\bigwedge_{0\leq i\leq \ell}(\mu^{k_1})^{-1}\circ\cdots\circ(\mu^{k_1+i-1})^{-1}(\widetilde{t}_{k_1+i}).
\]
It is clear that $s_\ell>t_{k_1}$ and $s_\ell$ is a decreasing sequence converging to $t_{k_1}$, so the set $\left\{\ell>0\mid s_\ell<s_{\ell-1}\right\}$ is infinite. Let $\ell>0$ be such that $s_\ell<s_{\ell-1}$ and $s_\ell-t_{k_1}<\eps$, then
\[
s_\ell=(\mu^{k_1})^{-1}\circ\cdots\circ(\mu^{k_1+\ell-1})^{-1}(\widetilde{t}_{k_1+\ell}).
\]
Therefore
\begin{align}\label{eqlmComp8}
\widetilde{t}_{k_1+\ell}&=\mu^{k_1+\ell-1}\circ\cdots\circ\mu^{k_1}(s_\ell)<\mu^{k_1+\ell-1}\circ\cdots\circ\mu^{k_1}(t_{k_1}+\eps)\\
\nonumber&\leq \sum_{i=k_1}^{k_1+\ell-1}\|\mu^i-{\rm id}\|_{\widetilde{t}_i}+t_{k_1}+\eps
\leq \sum_{i=k_1}^{k_1+\ell-1}\|\mu^i-{\rm id}\|_{\widetilde{t}_i}+\|\lambda^{k_1}-{\rm id}\|_{t_{k_1}}+\xi(x)+\eps\\
\nonumber&\leq \xi(x)+\eps+2\sum_{i\geq k_1}\|\mu^i-{\rm id}\|_{\widetilde{t}_i}
\leq \xi(x)+\eps+4\sum_{i\geq k_1}\rho_{\xi(x)+4\eps,K}(x^i,x^{i+1})
<\xi(x)+3\eps.
\end{align}
Furthermore $\widetilde{t}_{k_1+\ell}<\mu^{k_1+\ell-1}_{\widetilde{t}_{k_1+\ell-1}}\leq \xi(x^{k_1+\ell})$ and 
\[
d(x^{k_1+\ell}_{\widetilde{t}_{k_1+\ell}},K^c)\wedge(\xi(x)+4\eps-\widetilde{t}_{k_1+\ell})_+\1_{\widetilde{t}_{k_1+\ell}<\xi(x^{k_1+\ell})}\leq 2\rho_{\xi(x)+4\eps,K}(x^{k_1+\ell},x^{k_1+\ell+1})< \eps,
\]
so by \eqref{eqlmComp8}
\[
d(x^{k_1+\ell}_{\widetilde{t}_{k_1+\ell}},K^c)<\eps,
\]
and
\[
d(x^{k_1}_{s_\ell},K^c)\leq d(x^{k_1}_{s_\ell},x^{k_1+\ell}_{\widetilde{t}_{k_1+\ell}})+d(x^{k_1+\ell}_{\widetilde{t}_{k_1+\ell}},K^c)< \sum_{i=k_1}^{k_1+\ell-1}\sup_{s< \widetilde{t}_i}d(x^i_s,x^{i+1}_{\mu^i_s}) +\eps <2\eps.
\]
Hence we have $d(a,x^{k_1}_{s_\ell})>\eps$ and $d(a,x^{k_1}_{t_{k_1}})<\eps$. Letting tend $\ell\to\infty$ we get a contradiction.\\
\emph{Step 4: fix $t\geq 0$ and $K\subset S$  a compact set: we prove that $\lim_{k\to\infty}\rho_{t,K}(x^k,x)=0$.}
Taking $k$ large enough \eqref{eqlmComp2} holds and by using \eqref{eqlmComp4}, \eqref{eqlmComp5} and \eqref{eqlmComp6},
\begin{align*}
\sup_{s<t_k}d\big(x^k_s,x_{\lambda^k_s}\big)\leq\sum_{\ell\geq k}\sup_{s< \widetilde{t}_\ell}d(x^\ell_s,x^{\ell+1}_{\mu^\ell_s})\leq 2\sum_{\ell\geq k}\rho_{t,K}(x^\ell,x^{\ell+1}),\\
\|\lambda^k-{\rm id}\|\leq\sum_{\ell\geq k}\|\mu^\ell-{\rm id}\|_{\widetilde{t}_\ell}\leq 2\sum_{\ell\geq k}\rho_{t,K}(x^\ell,x^{\ell+1}),\\
\|\log\dot{\lambda}^k\|\leq \sum_{\ell\geq k}\|\log\dot{\mu}^\ell\|_{\widetilde{t}_\ell}\leq 2\sum_{\ell\geq k}\rho_{t,K}(x^\ell,x^{\ell+1}).
\end{align*}
Moreover by the previous step we know that the infimum in \eqref{eqlmComp3} is a minimum and so, in the present step, we set
\begin{align*}
&m:=\min\left\{\ell\geq k\mid (\mu^k)^{-1}\circ\cdots\circ(\mu^{\ell-1})^{-1}(\widetilde{t}_{\ell})=t_k\right\}\in\N,\\
&M:=\sup\left\{\ell\geq k\mid (\mu^k)^{-1}\circ\cdots\circ(\mu^{\ell-1})^{-1}(\widetilde{t}_{\ell})=t_k\right\}\in\N\cup\{\infty\}.
\end{align*}
Then
\begin{align*}
d(x^k_{t_k},K^c)\wedge(t-t_k)_+\1_{t_k<\xi(x^k)}
&\leq \sum_{\ell=k}^{m-1}\sup_{s< \widetilde{t}_\ell}d(x^\ell_s,x^{\ell+1}_{\mu^\ell_s})\vee\|\mu^\ell-{\rm id}\|_{\widetilde{t}_\ell}\\
&\quad+d(x^{m}_{\widetilde{t}_{m}},K^c)\wedge(t-\widetilde{t}_{m})_
+\1_{\widetilde{t}_{m}<\xi(x^{m})}\\
&\leq 2\sum_{\ell\geq k}\rho_{t,K}(x^\ell,x^{\ell+1}).
\end{align*}
It is clear that $\lambda^k_{t_k}=\xi(x)$ if and only if $M=\infty$. If $M<\infty$
\begin{align}
\nonumber d(x_{\lambda^k_{t_k}},K^c)\wedge(t-\lambda^k_{t_k})_+
&=d(x_{\lambda^{M}_{t_{M}}},K^c)\wedge(t-\lambda^{M}_{t_{M}})_+\\
\nonumber&\leq d(x^{M+1}_{\mu^{M}_{t_{M}}},K^c)\wedge(t-\mu^{M}_{t_{M}})_+ + \sum_{\ell> M}\sup_{s< \widetilde{t}_\ell}d(x^\ell_s,x^{\ell+1}_{\mu^\ell_s})\vee\|\mu^\ell-{\rm id}\|_{\widetilde{t}_\ell}\\
\label{eqlmComp7}&\leq 2\sum_{\ell\geq k}\rho_{t,K}(x^\ell,x^{\ell+1}).
\end{align}
We have proved that
\[
\rho_{t,K}(x^k,x)\leq 2\sum_{\ell\geq k}\rho_{t,K}(x^\ell,x^{\ell+1}) \cv{k\to\infty}0.
\]
\emph{Step 5: we prove that $x\in\Dloc(x)$.}
Suppose that $\xi(x)<\infty$ and that $\{x_s\}_{s<\xi(x)}\Subset S$. 
Let $\eps >0$ be such that $K:=\left\{y\in S\mid d(y,\{x_s\}_{s<\xi(x)})\leq \eps\right\}$ is compact and set $t=\xi(x)+\eps$. By using \eqref{eqlmComp7} we have, for $k$ large enough,
\[
d(x_{\lambda^k_{t_k}},K^c)\wedge(t-\lambda^k_{t_k})_+\1_{\lambda^k_{t_k}<\xi(x)}\leq 2\sum_{\ell\geq k}\rho_{t,K}(x^\ell,x^{\ell+1})<\eps.
\]
Then $\xi(x) = \lambda^k_{t_k}$ and  we deduce that
\[
\sup_{s<\xi(x)}d\big(x^k_{(\lambda^k)^{-1}s},x_s\big)\leq 2\sum_{\ell\geq k}\rho_{t,K}(x^\ell,x^{\ell+1})\cv{k\to\infty} 0
\]
and that the limit $x_{\xi(x)-}$ exists in $S$. Therefore $x\in\Dloc(S)$ and $x^k$ converges to $x$ for the local Skorokhod topology.
\end{proof}
To prove the separability and the criterion of compactness we will use the following technical result:
\begin{lemma}\label{lmAproxEsp}
Let $R\subset S$, $\delta>0$ and $N\in\N$ be. Define
\begin{multline*}
\Ec_{R,\delta,N}:=\big\{x\in\Dloc(S)\,\big|\, \xi(x)\leq N\delta,~\forall k\in\N,~x\text{ is a constant in }R\cup\{\Delta\}\\\text{ on }[k\delta,(k+1)\delta)\big\}.
\end{multline*}
Then for any $x\in\Dloc(S)$
\[
\widetilde{\rho}_{N\delta ,K}(x,\Ec_{R,\delta,N})\leq \Big(\sup_{a\in K}d(a,R)+ \omega^\prime_{N\delta,K,x}(\delta)\Big)\vee\delta.
\]
\end{lemma}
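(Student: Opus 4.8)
The plan is to produce, for a fixed $x\in\Dloc(S)$ and an arbitrary $\eps>0$, an explicit $y\in\Ec_{R,\delta,N}$ together with a time change and truncation times witnessing $\widetilde{\rho}_{N\delta,K}(x,y)\le\big(\sup_{a\in K}d(a,R)+\omega^\prime_{N\delta,K,x}(\delta)\big)\vee\delta+O(\eps)$. One may assume $\omega^\prime_{N\delta,K,x}(\delta)<\infty$, the bound being otherwise vacuous. By \eqref{eqDefOme} I would fix a subdivision $0=t_0<\dots<t_M\le\xi(x)$ with $t_{i+1}-t_i>\delta$, $(t_M,x_{t_M})\notin[0,N\delta]\times K$, and oscillation $\sup_{0\le i<M}\sup_{t_i\le s_1,s_2<t_{i+1}}d(x_{s_1},x_{s_2})\le\omega^\prime_{N\delta,K,x}(\delta)+\eps$; truncating the subdivision at the first index where $(t_i,x_{t_i})$ leaves $[0,N\delta]\times K$, one may assume in addition that $t_i\le N\delta$ and $x_{t_i}\in K$ for every $i<M$, and then $M\delta<t_M$ together with $t_{M-1}\le N\delta$ forces $M\le N$. (The degenerate situations $M=0$, i.e.\ $x_0\notin K$, and $K\in\{\emptyset,S\}$ are to be dealt with by direct inspection and are easier.)

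Next I would put $\tilde t_i:=\lfloor t_i/\delta\rfloor\,\delta$ for $0\le i\le M$; since $t_{i+1}-t_i>\delta$ these are distinct multiples of $\delta$ with $\tilde t_{i+1}\ge\tilde t_i+\delta$ and $0\le t_i-\tilde t_i<\delta$, and $\tilde t_M\ge\delta$. Using $x_{t_i}\in K$, choose $r_i\in R$ with $d(x_{t_i},r_i)\le\sup_{a\in K}d(a,R)+\eps$. If $t_M\le N\delta$ (hence $x_{t_M}\notin K$): let $y$ equal $r_i$ on each $[\tilde t_i,\tilde t_{i+1})$, $i<M$, and $\Delta$ on $[\tilde t_M,\infty)$, so $\xi(y)=\tilde t_M\in[\delta,N\delta]$ and $y\in\Ec_{R,\delta,N}$; let $\lambda\in\widetilde{\Lambda}$ be the piecewise affine increasing homeomorphism of $\R_+$ with $\lambda_{t_i}=\tilde t_i$ for $i\le M$ and slope $1$ on $[t_M,\infty)$; take the truncation times $\sigma:=t_M$ and $\lambda_\sigma=\tilde t_M=\xi(y)$. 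If $t_M>N\delta$: then $N\delta\in[t_{M-1},t_M)$; let $y$ equal $r_i$ on $[\tilde t_i,\tilde t_{i+1})$ for $i<M-1$, $r_{M-1}$ on $[\tilde t_{M-1},N\delta)$, and $\Delta$ afterwards, so $\xi(y)=N\delta$; define $\lambda$ as above on $[0,t_{M-1}]$, with slope $1$ on $[t_{M-1},N\delta]$ (so $\lambda_{N\delta}=N\delta-(t_{M-1}-\tilde t_{M-1})$), then affinely up to the value $N\delta$ on $[N\delta,\sigma]$ with $\sigma:=N\delta+\tfrac12\min(\delta,t_M-N\delta)$, and slope $1$ beyond $\sigma$ (when $t_{M-1}$ is an integer multiple of $\delta$ this last insertion is vacuous and $\sigma=N\delta$); the truncation times are $\sigma$ and $\lambda_\sigma=N\delta=\xi(y)$.

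It then remains to check the three contributions to $\widetilde{\rho}_{N\delta,K}(x,y)$ for this choice. On each interval of affinity $\lambda_s-s$ is a convex combination of the endpoint values, each of modulus $<\delta$, so $\|\lambda-{\rm id}\|_\sigma<\delta$. For $s<\sigma$ lying in $[t_i,t_{i+1})$ one has $\lambda_s\in[\tilde t_i,\tilde t_{i+1})$, hence $y_{\lambda_s}=r_i$, whence $d(x_s,y_{\lambda_s})\le d(x_s,x_{t_i})+d(x_{t_i},r_i)\le\omega^\prime_{N\delta,K,x}(\delta)+\sup_{a\in K}d(a,R)+2\eps$; in the second case the inserted piece $[N\delta,\sigma)$ still lies inside $[t_{M-1},t_M)$, so the same estimate holds. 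Finally both boundary terms vanish: $d(x_\sigma,K^c)\wedge(N\delta-\sigma)_+\1_{\sigma<\xi(x)}=0$ because either $\sigma=t_M$ with $x_{t_M}\notin K$, or $\sigma\ge N\delta$ so $(N\delta-\sigma)_+=0$, and the $y$-term carries the factor $\1_{\lambda_\sigma<\xi(y)}=0$ since $\lambda_\sigma=\xi(y)$. Hence $\widetilde{\rho}_{N\delta,K}(x,y)\le\big(\sup_{a\in K}d(a,R)+\omega^\prime_{N\delta,K,x}(\delta)+2\eps\big)\vee\delta$, and letting $\eps\downarrow0$ gives the claim.

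The substantive step will be the construction and verification of $\lambda$, particularly when $t_M>N\delta$: one must keep $\|\lambda-{\rm id}\|$ strictly below $\delta$ — which is the reason for rounding each $t_i$ down to the grid \emph{independently}, rather than mapping each $[t_i,t_{i+1})$ onto a block of grid cells, a naive scheme in which the rounding errors would accumulate — while simultaneously steering the truncation image $\lambda_\sigma$ exactly onto the grid point $\xi(y)\le N\delta$ and keeping $\sigma$ in a range where $x$ is still controlled by the chosen subdivision. The genuinely degenerate cases ($K$ empty or equal to $S$, $M=0$, $R$ empty) are disposed of separately by inspection.
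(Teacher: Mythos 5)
Your proposal is correct and follows essentially the same route as the paper's proof: both build the approximant as a step function on the $\delta$-grid with values $r_i\in R$ chosen near $x_{t_i}$, round each subdivision point to the grid \emph{independently} so that $\|\lambda-{\rm id}\|<\delta$, and make the boundary terms vanish by stopping either past $N\delta$ or where $x$ leaves $K$. The only (cosmetic) difference is the treatment of the last point: the paper truncates at a single time $t^*=\min\{s\mid s\geq N\delta\text{ or }d(x_s,K^c)=0\}$ and rounds it up via $\lceil t^*/\delta\rceil\delta$, whereas you split into the cases $t_M\leq N\delta$ and $t_M>N\delta$ with a small affine insertion near $N\delta$.
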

\begin{proof}
For arbitrary $\eps>0$ , there exist $0=t_0<\cdots< t_M\leq\xi(x)$ such that
\[
\sup_{\substack{0\leq i<M\\t_i\leq s_1,s_2<t_{i+1}}}d(x_{s_1},x_{s_2})\leq \omega^\prime_{N\delta,K,x}(\delta)+\eps,
\]
such that for all $0\leq i <M$, $t_{i+1}>t_i+\delta$ and $(t_M,x_{t_M})\not\in[0,N\delta]\times K$.
Denote $t^*:=\min\big\{s\geq 0\,\big|\, s\geq N\delta\text{ or }d(x_s,K^c)=0\big\}\leq t_M$ and define $\widetilde{M}:=min\left\{0\leq i\leq M\mid t_i\geq t^*\right\}$. Define $\widetilde{t}_{\widetilde{M}}:=\left\lceil\frac{t^*}{\delta}\right\rceil\delta$ where $\lceil r\rceil$ denotes the smallest integer larger or equal than the real number $r$. Moreover, for $0\leq i <\widetilde{M}$ define $\widetilde{t}_i:=\left\lfloor\frac{t_i}{\delta}\right\rfloor\delta$ where we recall that $\lfloor r\rfloor$ denotes the integer part of the real number $r$, so $0=\widetilde{t}_0<\ldots<\widetilde{t}_{\widetilde{M}}$. Finally, we define $\widetilde{x}\in \Ec_{R,\delta,N}$ by
\[\left\{\begin{array}{l}
\xi(\widetilde{x}):=\widetilde{t}_{\widetilde{M}},\\
\forall 0\leq i <\widetilde{M}:\quad\text{we choose }\widetilde{x}_{\widetilde{t}_i}\text{ in $R$ such that }d(x_{t_i},\widetilde{x}_{\widetilde{t}_i})< d(x_{t_i},R)+\eps,\\
\forall 0\leq i <\widetilde{M},~\forall\widetilde{t}_i\leq s <\widetilde{t}_{i+1}:\quad\widetilde{x}_s:=\widetilde{x}_{\widetilde{t}_i},
\end{array}\right.\]
and $\lambda\in\widetilde{\Lambda}$ given by
\[\left\{\begin{array}{l}
\forall 0\leq i \leq\widetilde{M}:\quad\lambda_{t_i\wedge t^*}=\widetilde{t}_i,\\
\forall 0\leq i <\widetilde{M}:\quad\lambda\text{ is affine on }[t_i,t_{i+1}\wedge t^*],\\
\forall s\geq t^*:\quad\dot{\lambda}_s = 1.\\
\end{array}\right.\]
We can write
\begin{align*}
\widetilde{\rho}_{N\delta ,K}(x,\Ec_{R,\delta,N})&\leq \widetilde{\rho}_{N\delta ,K}(x,\widetilde{x})
\leq \sup_{s< t^*}d(x_s,\widetilde{x}_{\lambda_s})\vee\|\lambda-{\rm id}\|\\
&\leq \left(\sup_{a\in K}d(a,R)+ \omega^\prime_{t,K,x}(\delta)+2\eps\right)\vee\delta,
\end{align*}
so letting $\eps\to0$ we obtain the result.
\end{proof}
The separability is an easy consequence:
\begin{lemma}
The local Skorokhod topology on $\Dloc(S)$ is separable.
\end{lemma}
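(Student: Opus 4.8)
The plan is to exhibit an explicit countable dense subset of $\Dloc(S)$ and to verify density through the metric $\mathrm{dist}$ of \eqref{eqDist}, using Lemma \ref{lmAproxEsp} to produce nearby piecewise-constant paths and Lemma \ref{lmEqDist} to pass from $\widetilde{\rho}$-estimates to $\rho$-estimates. Fix once and for all a countable dense set $R\subset S$ and the exhaustive sequence $(K_n)_{n\in\N}$, and put
\[
\Dbf:=\bigcup_{\delta\in\Q_+^*}\ \bigcup_{N\in\N}\Ec_{R,\delta,N},
\]
with $\Ec_{R,\delta,N}$ as in Lemma \ref{lmAproxEsp}. First I would note that $\Dbf$ is countable: an $x\in\Ec_{R,\delta,N}$ is forced to have $\xi(x)=m\delta$ for some integer $m\le N$ and is then determined by the finitely many values, lying in the countable set $R$, that it takes on the intervals $[k\delta,(k+1)\delta)$ for $0\le k<m$.

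To prove that $\Dbf$ is dense, fix $x\in\Dloc(S)$ and $\eps>0$, choose $N_0$ with $\sum_{n>N_0}2^{-n}<\eps/3$, and set $K:=K_{N_0}$. Reading off the definitions, $\rho_{t',K'}\le\rho_{t,K}$ pointwise whenever $t'\le t$ and $K'\subset K$, and likewise $\widetilde{\rho}_{t',K}\le\widetilde{\rho}_{t,K}$ and $\omega^\prime_{t',K,x}\le\omega^\prime_{t,K,x}$ for $t'\le t$; hence it suffices to find $y\in\Dbf$ with $\rho_{N_0,K}(x,y)<\eps/3$, which forces $\mathrm{dist}(x,y)<\tfrac{2\eps}{3}+\tfrac{\eps}{3}=\eps$. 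Since $R$ is dense and $K$ compact, $\sup_{a\in K}d(a,R)=0$, so Lemma \ref{lmAproxEsp} reads
\[
\widetilde{\rho}_{N\delta,K}\big(x,\Ec_{R,\delta,N}\big)\le\omega^\prime_{N\delta,K,x}(\delta)\vee\delta .
\]
Taking $N:=\lceil N_0/\delta\rceil$, so that $N_0\le N\delta<N_0+1$, and using the monotonicities above, this is at most $\omega^\prime_{N_0+1,K,x}(\delta)\vee\delta$, which tends to $0$ as $\delta\downarrow0$ by Proposition \ref{propMod}(i). In particular $\widetilde{\rho}_{N_0,K}(x,\Ec_{R,\delta,N})\to0$ as $\delta\downarrow0$.

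Since $\mathrm{dist}$ is built from $\rho$, not $\widetilde{\rho}$, the last step is the conversion via Lemma \ref{lmEqDist}. Using Proposition \ref{propMod}(i) once more, pick $\eta>0$ with $\omega^\prime_{N_0,K,x}(u)<\eps/3$ for $0<u<\eta$; then, by the previous paragraph, choose $\delta\in\Q_+^*\cap(0,1]$ (and the associated $N$) and $y\in\Ec_{R,\delta,N}\subset\Dbf$ with $\widetilde{\rho}_{N_0,K}(x,y)<\min\big(\tfrac19,\tfrac{\eps^2}{324},\eta^2\big)$. Lemma \ref{lmEqDist} then yields
\[
\rho_{N_0,K}(x,y)\le 6\sqrt{\widetilde{\rho}_{N_0,K}(x,y)}\vee\omega^\prime_{N_0,K,x}\!\Big(\sqrt{\widetilde{\rho}_{N_0,K}(x,y)}\Big)<\frac{\eps}{3},
\]
completing the density argument, and hence the proof that $\Dloc(S)$ is separable.

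The only delicate point I anticipate is the parameter bookkeeping: Lemma \ref{lmAproxEsp} controls $\widetilde{\rho}_{N\delta,K}$ with $N\delta$ slightly exceeding the index $N_0$ appearing in $\mathrm{dist}$, while $t\mapsto\omega^\prime_{t,K,x}$ is monotone in the unhelpful direction, so one has to allow the harmless enlargement of the time horizon to $N_0+1$ before invoking Proposition \ref{propMod}(i); and the hypothesis $x\in\Dloc(S)$ enters exactly through the decay at $0$ of $\omega^\prime_{N_0,K,x}$ and $\omega^\prime_{N_0+1,K,x}$ supplied by that proposition, both in bounding $\widetilde{\rho}_{N_0,K}(x,\Ec_{R,\delta,N})$ and in the $\widetilde{\rho}\to\rho$ conversion.
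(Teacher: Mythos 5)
Your proof is correct and follows essentially the same route as the paper: the same countable family $\bigcup\Ec_{R,\delta,N}$ of piecewise-constant paths valued in a countable dense set $R$, with density obtained from Lemma \ref{lmAproxEsp} and Proposition \ref{propMod}. The only difference is that you make explicit the $\widetilde{\rho}\to\rho$ conversion via Lemma \ref{lmEqDist} and the summation in the metric \eqref{eqDist}, which the paper leaves implicit by appealing to the already-established equivalence of the two families of pseudo-metrics (modulo the harmless parsing of the constant in Lemma \ref{lmEqDist} as $6(\,\cdot\vee\cdot\,)$, which only changes $\eps/3$ to $\eps/18$ in your choice of $\eta$).
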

\begin{proof}
Let $R$ be a countable dense part of $S$ and introduce the countable set
\[
E:=\bigcup_{n,N\in\N^*}\Ec_{R,\frac{1}{n},N}.
\]
Consider $x\in\Dloc(S)$, $t\geq 0$, $K\subset S$ a compact set and let $\eps >0$ be. We choose $n\in\N^*$ such that $n^{-1}\leq \eps$ and $\omega^\prime_{t+1,K,x}(n^{-1})\leq \eps$ and set $N:=\lceil n t\rceil$. We can write
\begin{align*}
\widetilde{\rho}_{t ,K}(x,\Ec_{R,\frac{1}{n},N})&\leq \widetilde{\rho}_{\frac{N}{n} ,K}(x,\Ec_{R,\frac{1}{n},N})\leq \left(\sup_{a\in K}d(a,R)+ \omega^\prime_{\frac{N}{n},K,x}(\frac{1}{n})\right)\vee\frac{1}{n}\\
&\leq\omega^\prime_{t+1,K,x}(\frac{1}{n})\vee\frac{1}{n}\leq \eps.
\end{align*}
We deduce that $E$ is dense, hence $\Dloc(S)$ is separable.
\end{proof}
We have now all the ingredients to prove the characterisation of the compactness:
\begin{proof}[Proof of Theorem \ref{thmComp}]
First, notice that, similarly as in the proof of Lemma \ref{lmUU}, the condition \eqref{eqThmComp} is equivalent to:
for all $t\geq 0$, all compact subset $K\subset S$ and all open subset $U\subset S^2$ containing the diagonal $\big\{(y,y)\,|\, y\in S\big\}$, there exists $\delta>0$ such that for 
all $x\in D$ there exist $0=t_0<\cdots <t_N\leq\xi(x)$ such that
\[
\forall 0\leq i<N,~s_1,s_2\in[t_i,t_{i+1}),\quad (x_{s_1},x_{s_2})\in U,
\]
for all $0\leq i<N$, $t_{i+1}-t_i>\delta$, and $(t_N,x_{t_N})\not\in[0,t]\times K$. Hence the condition \eqref{eqThmComp} is independent to $d$ and we can suppose that $(S,d)$ is complete.
Suppose that $D$ satisfy condition \eqref{eqThmComp}, then, by using Lemma \ref{lmComp}, we need to prove that for all $t\geq 0$, $K\subset S$ a compact set and $\eps>0$ arbitrary, $D$ can be  recovered by  a finite number of $\rho_{t,K}$-balls of radius $\eps$. Let $0<\eta\leq\frac{1}{9}$ be such that
\[
6\cdot\sqrt{\eta}\vee\sup_{x\in D}\omega^\prime_{t,K,x}\left(\sqrt{\eta}\right)\leq \eps,
\]
and let $\delta\leq\eta$ be such that
\[
\sup_{x\in D}\omega^\prime_{t+1,K,x}\left(\delta\right)\leq\frac{\eta}{2}.
\]
Since $K$ is compact we can choose a finite set $R\subset S$ such that
\[
\sup_{a\in K}d(a,R)\leq\frac{\eta}{2},
\]
take $N:=\lceil t\delta^{-1}\rceil$.  Then by using Lemma \ref{lmAproxEsp},
\begin{align*}
\sup_{x\in D}\widetilde{\rho}_{t ,K}(x,\Ec_{R,\delta,N})\leq \sup_{x\in D}\widetilde{\rho}_{N\delta ,K}(x,\Ec_{R,\delta,N})\leq \left(\sup_{a\in K}d(a,R)+ \sup_{x\in D}\omega^\prime_{N\delta,K,x}(\delta)\right)\vee\delta\leq \eta
\end{align*}
and by using Lemma \ref{lmEqDist},
\[
\sup_{x\in D}\rho_{t ,K}(x,\Ec_{R,\delta,N})\leq 6\sup_{x\in D}\left(\sqrt{\widetilde{\rho}_{t ,K}(x,\Ec_{R,\delta,N})}\vee\omega^\prime_{t,K,x}\left(\sqrt{\widetilde{\rho}_{t ,K}(x,\Ec_{R,\delta,N})}\right)\right)\leq \eps.
\]
Since $\Ec_{R,\delta,N}$ is finite we can conclude that $D$ is relatively compact.\\
To prove the converse sentence, thanks to the first part of Proposition \ref{propMod} it is enough to prove that if $x^k,x\in\Dloc(S)$ with $x^k$ converging to $x$, then for all $t\geq 0$ and all compact subset $K\subset S$,
\[
\limsup_{k\to\infty}\omega^\prime_{t,K,x^k}(\delta)\cv{\delta\to 0}0.
\]
This is a direct consequence of Proposition \ref{propMod}. Let us stress that although we cite Theorem \ref{thmLocSkoTop} in the proof of Proposition \ref{propMod}, in reality we only need the sequential characterisation of the convergence.
\end{proof}
We close this section by the study of the Borel $\sigma$-algebra $\Bc(\Dloc(S))$.
\begin{lemma}
Borel $\sigma$-algebra $\Bc(\Dloc(S))$ coincides with $\Fc$.
\end{lemma}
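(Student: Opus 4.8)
The plan is to prove the two inclusions $\Fc\subseteq\Bc(\Dloc(S))$ and $\Bc(\Dloc(S))\subseteq\Fc$ separately, after a couple of standard reductions. Since $\Dloc(S)$ is separable metrisable (Theorem \ref{thmLocSkoTop}), $\Bc(\Dloc(S))$ is generated by countably many balls of the metric \eqref{eqDist}, so the second inclusion amounts to showing that $x\mapsto\rho_{n,K_n}(x,z)$ is $\Fc$-measurable for every fixed $n$ and every fixed $z\in\Dloc(S)$, with $(K_n)$ an exhaustive sequence of compacts. On the other side $\Fc=\sigma(X_t:t\geq 0)$, and by right-continuity one has $X_t=\lim_{q\downarrow t,\,q\in\Q_+}X_q$ pointwise on $\Dloc(S)$ (in $S^\Delta$), so $\Fc=\sigma(X_q:q\in\Q_+)$ as well; hence for the first inclusion it suffices to show that each coordinate map $X_t:\Dloc(S)\to S^\Delta$ is Borel measurable.

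For $\Fc\subseteq\Bc(\Dloc(S))$, the difficulty is that the usual averaging functionals used to recover coordinates are not continuous here, because a path may be absorbed at $\Delta$ while nearby paths keep moving past its explosion time (so that a functional inspecting the path after $\xi$ cannot be continuous). I would bypass this as follows: fix a compactly supported $f\in\rmC(S,\R_+)$, extended by $f(\Delta):=0$, fix $t\geq 0$ and $\eps>0$, and set $\Phi_{t,\eps,f}(x):=\sup_{t\leq s<t+\eps}f(x_s)$. Using the convergence description of Theorem \ref{thmLocSkoTop}, one checks that $\Phi_{t,\eps,f}$ is lower semi-continuous: if $x^k\to x$ with reparametrisations $\lambda^k\in\Lambda$, then for each $s\in[t,t+\eps)$ with $s<\xi(x)$ one has $\lambda^k_s\to s$ and $f(x^k_{\lambda^k_s})\to f(x_s)$ for $k$ large, while $f\circ x\equiv 0$ on $[\xi(x),\infty)$ and $f\geq 0$; hence $\Phi_{t,\eps,f}$ is Borel. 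By right-continuity of $s\mapsto f(x_s)$, $\Phi_{t,\eps,f}(x)\downarrow f(x_t)$ as $\eps\downarrow 0$, so $x\mapsto f(x_t)$ is Borel for every such $f$. Since the compactly supported $f\in\rmC(S,\R_+)$ separate the points of $S$, a countable subfamily of them generates $\Bc(S)$ ($S$ being second countable), and a sequence $f_n\uparrow\1_S$ gives $\{X_t=\Delta\}=\bigcap_n\{f_n(X_t)=0\}$, it follows that $X_t$ is $\Bc(\Dloc(S))$-$\Bc(S^\Delta)$-measurable.

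For $\Bc(\Dloc(S))\subseteq\Fc$, fix $z$, $t$ and $K$. In the infimum defining $\rho_{t,K}(\cdot,z)$ one may restrict $t_1$ to the rationals together with the single value $\xi(x)$, and $\lambda$ to the countably many piecewise-linear elements of $\Lambda$ with rational breakpoints and slopes; by right-continuity of the paths and a routine density argument this countable family realises the same infimum. For each fixed admissible datum the corresponding expression is built from finitely many coordinate evaluations $x\mapsto x_s$, from the explosion time $\xi$ --- which is $\Fc$-measurable by Corollary \ref{corST} --- and from continuous functions thereof, hence it is $\Fc$-measurable; therefore $\rho_{t,K}(\cdot,z)$, a countable infimum of such functions, is $\Fc$-measurable. (Alternatively, once $\Fc\subseteq\Bc(\Dloc(S))$ is known one may conclude directly: $\Dloc(S)$ is Polish, hence standard Borel, and $\Fc$ is countably generated and separates points of $\Dloc(S)$, so $\Fc=\Bc(\Dloc(S))$ by the classical fact --- a form of the Lusin--Souslin theorem --- that a countably generated sub-$\sigma$-algebra separating the points of a standard Borel space is its full Borel $\sigma$-algebra.)

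The step I expect to be the main obstacle is making the coordinate maps Borel, that is, the lower-semicontinuity of $\Phi_{t,\eps,f}$ and the passage $\eps\downarrow 0$, since this is exactly where the explosion behaviour of the paths --- and the resulting failure of the naive functionals --- has to be controlled; the density argument reducing the defining infimum of $\rho_{t,K}$ to a countable one is the other, more routine, technical point.
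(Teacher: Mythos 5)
Your proof is correct, but for the harder inclusion $\Bc(\Dloc(S))\subset\Fc$ it takes a genuinely different route from the paper. For $\Fc\subset\Bc(\Dloc(S))$ you and the paper do essentially the same thing: recover $f(X_t)$ as a monotone limit in $\eps$ of functionals of the path on $[t,t+\eps)$ that are Borel by (semi)continuity --- the paper uses the averages $\frac{1}{\eps}\int_t^{t+\eps}f(x_s)\,\d s$, continuous on the open set $\{t+\eps<\xi\}$, while you use the suprema $\sup_{t\le s<t+\eps}f(x_s)$ with $f\ge 0$ compactly supported, lower semi-continuous because explosion can only push the value down to $0$; both work. For the converse, the paper constructs, for each $x^0$, $t$, $K$, $\eps$, an explicit $\Fc$-measurable set $V$ sandwiched between the $\widetilde{\rho}_{t,K}$-balls of radii $\eps$ and $3\eps$ around $x^0$, which suffices by separability; you instead invoke the Lusin--Souslin fact that a countably generated, point-separating sub-$\sigma$-algebra of a standard Borel space equals the full Borel $\sigma$-algebra, applied to $\Fc=\sigma(X_q,\ q\in\Q_+)\subset\Bc(\Dloc(S))$ on the Polish space $\Dloc(S)$. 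That ``alternative'' is in fact the argument you should lead with: it is complete, standard and shorter than the paper's, at the price of importing a nontrivial descriptive-set-theoretic theorem where the paper stays elementary and self-contained. By contrast, your \emph{primary} route for this inclusion --- reducing the infimum defining $\rho_{t,K}(\cdot,z)$ to a countable family of $(t_1,\lambda)$ by a ``routine density argument'' --- is not routine as stated: the boundary terms $d(x_{t_1},K^c)\wedge(t-t_1)_+\1_{t_1<\xi(x)}$ and the admissibility constraint $\lambda_{t_1}\le\xi(z)$ are discontinuous in $t_1$ (moving $t_1$ off the value $\xi(x)$ or across a jump of $x$ can increase the expression by a fixed amount, and moving it upward may destroy admissibility), so approximating an optimal $t_1$ by rationals does not obviously preserve the infimum; this is exactly the difficulty the paper's two-sided sandwich (radii $\eps$ versus $3\eps$) is designed to absorb. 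Keep the Lusin--Souslin argument, drop the countable-infimum sketch, and your proof is complete.
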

\begin{proof}
Let $f\in\rmC(S^\Delta)$ and $0\leq a<b<\infty$ be. Consider $x^k\in\Dloc(S)$ converging to $x\in\Dloc(S)$, with $\xi(x)>b$, and take $\lambda^k\in\Lambda$ as in Theorem \ref{thmLocSkoTop}. Then for $k$ large enough $b\vee\lambda^k_b<\xi(x^k)$ and by dominated convergence
\[
\int_a^bf(x^k_s)\d s = \int_a^{\lambda^k_a}f(x^k_s)\d s+\int_a^bf(x^k_{\lambda^k_s})\dot{\lambda}^k_s\d s+\int_{\lambda^k_b}^bf(x^k_s)\d s\cv{k\to\infty}\int_a^bf(x_s)\d s.
\]
Hence the set $\left\{x\in\Dloc(S)\mid b<\xi(x)\right\}$ is open and on this set the function 
\[
x\mapsto\int_a^bf(x_s)\d s
\] 
is continuous, so for $t\geq 0$ and $\eps>0$ the mapping from $\Dloc(S)$ to $\R$
\[
x\mapsto\left\{\begin{array}{ll}
\frac{1}{\eps}\int_t^{t+\eps}f(x_s)\d s, &\text{if }t+\eps<\xi(x),\\
f(\Delta), &\text{otherwise},\\
\end{array}\right.\]
is measurable for the Borel $\sigma$-algebra $\Bc(\Dloc(S))$ and, the 
same is true for the mapping  $x\mapsto f(x_t)$, by taking the limit. Since $f$ is arbitrary, $x\mapsto x_t$ is also measurable and so $\Fc\subset\Bc(\Dloc(S))$.

Conversely, since the space is separable, it is enough to prove that for each $x^0\in\Dloc(S)$, $t\geq 0$, $K\subset S$ compact and $\eps> 0$ there exists $V\subset \Dloc(S)$, $\Fc$-measurable, such that
\begin{align}\label{eq1lmTrib}
\left\{x\in\Dloc(S)\mid \widetilde{\rho}_{t,K}(x,x^0)\leq \eps\right\}\subset V\subset\left\{x\in\Dloc(S)\mid \widetilde{\rho}_{t,K}(x,x^0)\leq 3 \eps\right\}.
\end{align}
Proposition \ref{propMod} allows to get the existence of $0=t^0_0<\cdots <t^0_N\leq\xi(x^0)$ such that
\[
\sup_{\substack{0\leq i<N\\t^0_i\leq s_1,s_2<t^0_{i+1}}}d(x^0_{s_1},x^0_{s_2})\leq\eps,
\]
and $(t^0_N,x^0_{t^0_N})\not\in[0,t]\times\in K$.
If we define
\[
V:=\left\{x\in\Dloc(S)\mid\begin{array}{l}
\exists0=t_0\leq\cdots\leq t_M\leq \xi(x),~M\leq N\text{ such that:}\\
\hspace{1cm}\forall 0\leq i\leq M,\quad |t_i-t^0_i|\leq \eps\\
\hspace{1cm}\forall 0\leq i< M,~\forall t\in[t_i,t_{i+1}),\quad d(x_t,x^0_{t^0_i})\leq 2\eps\\
\hspace{1cm}d(x^0_{t^0_M},K^c)\wedge (t-t^0_M)_+\1_{t^0_M<\xi(x^0)}\leq 2\eps\\
\hspace{1cm}d(x_{t_M-},K^c)\wedge d(x_{t_M},K^c)\wedge (t-t_M)_+\1_{t_M<\xi(x)}\leq 3\eps\\
\end{array}\right\},
\]
it is straightforward to obtain \eqref{eq1lmTrib}. Since
\[
V=\left\{x\in\Dloc(S)\mid\begin{array}{l}
\forall\delta>0,~\exists0=q_0\leq\cdots\leq q_M< \xi(x)-\delta,~M\leq N\text{ such that:}\\
\hspace{1cm}\forall 0\leq i\leq M,\quad |q_i-t^0_i|\leq \eps+\delta\\
\hspace{1cm}\forall 0\leq i< M,~\forall q\in[q_i+\delta,q_{i+1}-\delta],\quad d(x_q,x^0_{t^0_i})\leq 2\eps\\
\hspace{1cm}d(x^0_{t^0_M},K^c)\wedge (t-t^0_M)_+\1_{t^0_M<\xi(x^0)}\leq 2\eps\\
\hspace{1cm}d(x_{q_M},K^c)\wedge (t-q_M)_+\1_{q_M<\xi(x)}\leq 3\eps+\delta\\
\end{array}\right\},
\]
where $q$, $q_i$ and $\delta$ are chosen to  be rational, $V$ belongs $\Fc$. The proof is now complete.
\end{proof}
\bibliographystyle{alpha}
\bibliography{Loc_Fel.bib}
\end{document}